\DeclareMathOperator{\Crit}{\operatorname{Crit}}
\DeclareMathOperator{\Zer}{\operatorname{Zer}}
\DeclareMathOperator{\Pol}{\operatorname{Pol}}
\DeclareMathOperator{\Quad}{\operatorname{Quad}}
\DeclareMathOperator{\Hom}{\operatorname{Hom}}
\DeclareMathOperator{\Ext}{\operatorname{Ext}}
\DeclareMathOperator{\Tw}{\operatorname{Tw}}
\DeclareMathOperator{\Stab}{\operatorname{Stab}}
\DeclareMathOperator{\Aut}{\operatorname{Aut}}
\DeclareMathOperator{\Sph}{\operatorname{Sph}}
\DeclareMathOperator{\sgn}{\operatorname{sgn}}
\theoremstyle{definition}
\newtheorem{theorem}{Theorem}[section]
\newtheorem{definition}[theorem]{Definition}
\newtheorem{lemma}[theorem]{Lemma}
\newtheorem{proposition}[theorem]{Proposition}
\title{Stability conditions and cluster varieties \protect\\from quivers of type~$A$}
\author{
  Dylan G.L. Allegretti
}
\date{}
\begin{document}

\maketitle

\begin{abstract}
We describe the relationship between two spaces associated to a quiver with potential. The first is a complex manifold parametrizing Bridgeland stability conditions on a triangulated category, and the second is a cluster variety with a natural Poisson structure. For quivers of type $A$, we construct a local biholomorphism from the space of stability conditions to the cluster variety. The existence of this map follows from results of Sibuya in the classical theory of ordinary differential equations.
\end{abstract}

\tableofcontents

\section{Introduction}

\subsection{A map from the space of stability conditions to the cluster variety}

This paper is concerned with the relationship between two spaces associated to a quiver with potential. The first is a complex manifold parametrizing Bridgeland stability conditions on a triangulated category, and the second is a geometric object known as the cluster Poisson variety. The structure of each space is controlled by the combinatorics of the exchange graph of the quiver, but the combinatorics is used quite differently in the two cases. Indeed, the space of stability conditions has a wall-and-chamber decomposition, whereas the cluster variety is defined as a union of algebraic tori glued by birational maps.

In a forthcoming paper~\cite{AllegrettiBridgeland}, we study the relationship between these spaces for a large class of quivers with potentials associated to triangulated surfaces. In the present paper, we focus on the special case where the underlying graph of the quiver is a Dynkin diagram of type~$A_n$. We construct a natural map 
\[
F:\Sigma(A_n)\rightarrow\mathcal{X}(A_n)
\]
from the space of stability conditions to the cluster variety, and we prove that it is locally a biholomorphism onto its image.

To construct our map, we make use of the explicit descriptions of the spaces $\Sigma(A_n)$ and $\mathcal{X}(A_n)$ due to Bridgeland and Smith~\cite{BridgelandSmith} and Fock and Goncharov~\cite{FG1}, respectively. These descriptions allow us to identify $\Sigma(A_n)$ with a space of meromorphic quadratic differentials and to identify $\mathcal{X}(A_n)$ with a certain moduli space of generic configurations of points in~$\mathbb{CP}^1$. Once we have reinterpreted the two spaces in this way, the map~$F$ turns out to be equivalent to a construction of Sibuya~\cite{Sibuya} in the classical theory of ordinary differential equations.

Our use of analytic methods to construct the map $F$ is inspired by the work of Gaiotto, Moore, and Neitzke~\cite{GMN2} who described a similar relationship between the space of Higgs bundles on a Riemann surface and the cluster variety associated to a triangulation of the surface. Their construction was designed to give solutions to the Riemann-Hilbert problems formulated in~\cite{GMN1}. Likewise, the map $F$ is closely related to the Riemann-Hilbert problems studied by Bridgeland~\cite{Bridgeland16} in the context of Donaldson-Thomas theory. We will revisit this aspect of our construction in the sequel paper~\cite{AllegrettiBridgeland}.

The map $F$ is also related to the map recently introduced by Goncharov in~\cite{Goncharov}, which takes stability conditions to real tropical points of the cluster variety. Indeed, the map constructed in~\cite{Goncharov} appears to be a kind of tropical limit of the one studied in the present paper.

In the remainder of this introduction, we will describe our main results in more detail.

\subsection{Asymptotics of solutions to differential equations}

The space $\Sigma(A_n)$ parametrizes Bridgeland stability conditions on a triangulated category $\mathcal{D}$ associated to a quiver of type~$A_n$ by the Ginzburg algebra construction. More precisely, the space we consider is a quotient 
\[
\Sigma(A_n)=\Stab^\circ(\mathcal{D})/\Sph(\mathcal{D})
\]
where $\Stab^\circ(\mathcal{D})$ is a connected component of the usual manifold of stability conditions~\cite{Bridgeland07} and $\Sph(\mathcal{D})$ is a subgroup of the group of exact autoequivalences of~$\mathcal{D}$ generated by functors called \emph{spherical twists}~\cite{SeidelThomas}. The details of this definition will be explained in Section~\ref{sec:StabilityConditionsAndTheClusterVariety}. As we have indicated, points of~$\Sigma(A_n)$ correspond to meromorphic quadratic differentials, that is, meromorphic sections of $\omega_{\mathbb{CP}^1}^{\otimes2}$, the square of the holomorphic cotangent bundle of~$\mathbb{CP}^1$. Let $z$ be the local coordinate corresponding to the complex plane. The space of stability conditions corresponds to the particular class of quadratic differentials which can be represented by an expression 
\[
\phi(z)=P(z)dz^{\otimes2}
\]
where $P(z)$ is a polynomial of the form 
\[
P(z)=z^{n+1}+a_{n-1}z^{n-1}+\dots,+a_1z+a_0
\]
with simple zeros and $a_0,\dots,a_{n-1}\in\mathbb{C}$.

To apply the results of Sibuya, we form the second order differential equation 
\begin{align}
\label{eqn:introschrodinger}
y''(z)-P(z)y(z)=0.
\end{align}
This equation has an irregular singularity at the point $\infty$. Consequently, the asymptotics of its solutions depend on the direction in which $z$ tends to infinity in the complex plane. To understand this behavior more precisely, we introduce, for each $k\in\mathbb{Z}/(n+3)\mathbb{Z}$, the $k$th \emph{Stokes sector}:
\[
\mathscr{S}_k=\left\{z\in\mathbb{C}:\left|\arg z-\frac{2\pi}{n+3}k\right|<\frac{\pi}{n+3}\right\}.
\]
For example, the diagram below illustrates the Stokes sectors for $n=2$:
\[
\xy /l1.5pc/:
{\xypolygon5"A"{~:{(2,2.75):}~>{}}};
{(1,0)\PATH~={**@{-}}'"A1"};
{(1,0)\PATH~={**@{-}}'"A2"};
{(1,0)\PATH~={**@{-}}'"A3"};
{(1,0)\PATH~={**@{-}}'"A4"};
{(1,0)\PATH~={**@{-}}'"A5"};
(-2,0)*{\mathscr{S}_0};
(0.1,2.85)*{\mathscr{S}_4};
(0.1,-2.85)*{\mathscr{S}_1};
(3.5,-1.75)*{\mathscr{S}_2};
(3.5,1.75)*{\mathscr{S}_3};
\endxy
\]
In~\cite{Sibuya}, Sibuya considered, for any pair of linearly independent solutions $y_1(z)$ and $y_2(z)$ of~\eqref{eqn:introschrodinger}, the values 
\[
z_k\coloneqq\lim_{\substack{z\rightarrow\infty \\ z\in\mathscr{S}_k}}\frac{y_1(z)}{y_2(z)}
\]
where the limit is taken along any ray in the $k$th Stokes sector $\mathscr{S}_k$.

One can show that the limits $z_k$ for $k\in\mathbb{Z}/(n+3)\mathbb{Z}$ exist in~$\mathbb{CP}^1$ independently of the direction in $\mathscr{S}_k$ in which the limit is taken. Moreover, if we start with a different pair of linearly independent solutions, this construction yields a different set of values $z_k'\in\mathbb{CP}^1$, which are related to the~$z_k$ by the action of a common element of $PGL_2(\mathbb{C})$. In other words, the differential equation determines a well defined point in the quotient 
\begin{align}
\label{eqn:quotientspace}
(\mathbb{CP}^1)^{n+3}/PGL_2(\mathbb{C}).
\end{align}
The latter space is familiar from the work of Fock and Goncharov on cluster varieties. Indeed, it is a special case of their moduli space of framed $PGL_2(\mathbb{C})$-local systems on a surface~\cite{FG1}. In particular, this space is equipped with an atlas of rational coordinates given by cross ratios of points in~$\mathbb{CP}^1$. The idea that the space of monodromy data is coordinatized by cross ratios was also observed in the differential equations literature by Masoero~\cite{Masoero}.

It turns out that the values $z_k$ determined by the above construction cannot be arbitrary and must satisfy a certain genericity condition formulated by Sibuya. Remarkably, the locus of points in the above quotient satisfying this genericity condition is precisely the cluster Poisson variety. Thus we see that a choice of stability condition determines the differential equation~\eqref{eqn:introschrodinger}, and there is a corresponding point in the cluster variety determined by the asymptotic behavior of its solutions. This defines the map $F$. It follows from work of Bakken~\cite{Bakken} (see also~\cite{Sibuya}) that this map is a local biholomorphism. It intertwines natural actions of the group $\mathbb{Z}/(n+3)\mathbb{Z}$ on the spaces $\Sigma(A_n)$ and $\mathcal{X}(A_n)$.

\subsection{Wall-and-chamber decomposition and gluing of tori}

An important feature of the space $\Sigma(A_n)$ is that this space is composed of chambers glued together along their codimension~1 boundaries. This wall-and-chamber decomposition can be seen at the level of quadratic differentials by considering their induced foliations, as we now explain.

Let $\phi$ be a quadratic differential. Away from the critical points of $\phi$, there is a distinguished local coordinate $w$, unique up to transformations of the form $w\mapsto\pm w+\text{constant}$, such that 
\[
\phi(w)=dw^{\otimes2}.
\]
The \emph{horizontal foliation} is given by the lines $\Im(w)=\text{constant}$, and each line satisfying this condition is called a \emph{horizontal trajectory}. The particular quadratic differentials that we consider determine three types of horizontal trajectories:
\begin{enumerate}
\item A \emph{saddle trajectory} is a trajectory which connects distinct zeros of $\phi$.
\item A \emph{separating trajectory} is a trajectory which connects a zero and a pole of $\phi$.
\item A \emph{generic trajectory} is a trajectory which connects poles of $\phi$.
\end{enumerate}
For any quadratic differential, the number of saddle trajectories and separating trajectories is finite. There is a stratification of the space of quadratic differentials by the number of saddle trajectories. If $\phi$ lies in the top-dimensional stratum consisting of quadratic differentials with no saddle trajectories, then the separating trajectories define a decomposition of $\mathbb{CP}^1$ into finitely many cells, examples of which are illustrated below.
\[
\xy /l2pc/:
(2,-2)*{\times}="11";
(0,-2)*{\bullet}="21";
(-2,-2)*{\times}="31";
(3,0)*{\bullet}="12";
(1,0)*{\times}="22";
(-1,0)*{\times}="32";
(-3,0)*{\bullet}="42";
(2,2)*{\times}="13";
(0,2)*{\bullet}="23";
(-2,2)*{\times}="33";
{"11"\PATH~={**@{-}}'"21"},
{"21"\PATH~={**@{-}}'"31"},
{"12"\PATH~={**@{-}}'"22"},
{"32"\PATH~={**@{-}}'"42"},
{"13"\PATH~={**@{-}}'"23"},
{"23"\PATH~={**@{-}}'"33"},
{"11"\PATH~={**@{-}}'"12"},
{"21"\PATH~={**@{-}}'"22"},
{"21"\PATH~={**@{-}}'"32"},
{"31"\PATH~={**@{-}}'"42"},
{"12"\PATH~={**@{-}}'"13"},
{"22"\PATH~={**@{-}}'"23"},
{"32"\PATH~={**@{-}}'"23"},
{"42"\PATH~={**@{-}}'"33"},
{"12"\PATH~={**@{.}}'"21"},
{"21"\PATH~={**@{.}}'"42"},
{"12"\PATH~={**@{.}}'"23"},
{"23"\PATH~={**@{.}}'"42"},
{"21"\PATH~={**@{.}}'"23"},
\endxy
\]
Here we indicate zeros of the quadratic differential by $\times$ and poles by $\bullet$. The solid lines indicate separating trajectories. The quadratic differentials that we consider have a single pole of order $n+5$ at~$\infty$. It is known that the trajectories of a quadratic differential approach such a pole along $n+3$ distinguished tangent directions. Thus, if we take an oriented real blowup of~$\mathbb{CP}^1$ at the point~$\infty$, we get a disk, and the distinguished tangent directions determine $n+3$ marked points on the boundary of this disk. Choosing a single generic trajectory in each cell as illustrated by the dotted lines in the picture above, we obtain a triangulation of the disk, well defined up to isotopy, with vertices at the marked points. This triangulation is known as the \emph{WKB triangulation} of the quadratic differential. Each chamber in the space $\Sigma(A_n)$ can be understood as the set of all quadratic differentials inducing a given WKB triangulation.

Like the space of stability conditions, the structure of the cluster Poisson variety is controlled by the combinatorics of triangulations. To better understand this structure, consider a disk with $n+3$ marked points on its boundary. We can think of any point of the space~\eqref{eqn:quotientspace} as an assignment of a point of $\mathbb{CP}^1$ to each marked point, modulo the action of $PGL_2(\mathbb{C})$. Let $T$ be a triangulation of this disk with vertices at the marked points. We say that a point $\psi$ of the space~\eqref{eqn:quotientspace} is \emph{generic} with respect to an edge $i$ of this triangulation if the points of~$\mathbb{CP}^1$ assigned to the endpoints of $i$ by $\psi$ are distinct. We say that $\psi$ is \emph{generic} with respect to $T$ if it is generic with respect to every edge of $T$. We will see that the set of points $\mathcal{X}_T$ that are generic with respect to a triangulation $T$ is an algebraic torus:
\[
\mathcal{X}_T\cong(\mathbb{C}^*)^n.
\]
The cluster Poisson variety is the union of these tori for all triangulations $T$.

Consider, for each $\epsilon>0$, the region 
\[
\mathbb{H}(\epsilon)=\{\hbar\in\mathbb{C}:|\hbar|<\epsilon \text{ and } \Re(\hbar)>0\}.
\]
There is a natural $\mathbb{C}$-action on the space of stability conditions. (Specifically, a complex number $z\in\mathbb{C}$ acts on a stability condition with central charge $Z$ to give a stability condition with central charge $e^{-i\pi z}Z$; see Section~\ref{sec:StabilityConditionsAndTheClusterVariety}.) For any $\hbar\in\mathbb{H}(\epsilon)$, let $z$ be the unique complex number with $-\pi/2<\arg(z)<\pi/2$ such that $\hbar=e^{i\pi z}$ and define 
\[
F_\hbar:\Sigma(A_n)\rightarrow\mathcal{X}(A_n)
\]
by $F_\hbar(\sigma)=F(z\cdot\sigma)$. The following result describes the relationship between the wall-and-chamber decomposition of the space of stability conditions and the tori used to construct the cluster variety.

\begin{theorem}
\label{thm:introchambertotorus}
Suppose $\sigma$ lies in a chamber of $\Sigma(A_n)$. Then there exists $\epsilon>0$ such that for all points $\hbar\in\mathbb{H}(\epsilon)$, the point $F_\hbar(\sigma)$ is generic with respect to the WKB triangulation associated with $\sigma$.
\end{theorem}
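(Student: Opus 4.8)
The plan is to connect the WKB triangulation, which is defined in terms of the foliation of the quadratic differential $\phi$ associated to $\sigma$, with the genericity condition on configurations of points in $\mathbb{CP}^1$, which is about the limits $z_k$ defined via asymptotics of solutions to the Schr\"odinger equation. The bridge between these two pictures is the WKB approximation: as $\hbar\to0$, the solutions of the equation $\hbar^2 y'' - P\, y = 0$ (equivalently, after rescaling, the solutions of \eqref{eqn:introschrodinger} with $z$-parameter depending on $\hbar$) are asymptotically governed by $\exp\bigl(\pm\tfrac{1}{\hbar}\int^z\sqrt{P}\,\bigr)$, so the ratio $y_1/y_2$ along a ray into a Stokes sector is dominated by whichever of the two WKB solutions is subdominant in that sector. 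The sectors $\mathscr{S}_k$ correspond exactly to the $n+3$ marked points on the boundary of the real-blown-up disk, so the limit point $z_k$ is the ``$\mathbb{CP}^1$-label'' attached to the $k$th marked point, and genericity with respect to an edge of the WKB triangulation becomes the statement that the subdominant directions at the two ends of that edge give distinct limits.

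First I would recall, from the explicit description in Section~\ref{sec:StabilityConditionsAndTheClusterVariety} and the results of Bridgeland--Smith, how a point $\sigma$ in a chamber corresponds to a polynomial $P$ with simple zeros and no saddle trajectories, and how the WKB triangulation $T$ is read off from the separating trajectories; in particular each edge $i$ of $T$ is associated with a saddle connection of $\phi$ (a trajectory of phase determined by the central charge) joining two zeros, and more precisely to a pair of adjacent Stokes sectors. Next I would set up the $\hbar$-rescaled differential equation: acting by $z$ with $\hbar = e^{i\pi z}$ replaces $P$ by a rescaled version, and the asymptotic analysis of Sibuya shows that the $k$th limit $z_k = z_k(\hbar)$ is computed from the connection problem between Stokes sectors, which in the $\hbar\to0$ limit is controlled by the Stokes graph (= horizontal foliation) of $\phi$. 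Then I would invoke the standard WKB/exact-WKB comparison: because $\sigma$ lies in the interior of a chamber, the foliation of $\phi$ has no saddle trajectories, so the Stokes graph is ``generic,'' and for $\hbar$ small and in $\mathbb{H}(\epsilon)$ the Voros/WKB connection coefficients are close to their leading-order values. This forces, for each edge $i$ of $T$, the endpoints' labels $z_{k}$ and $z_{k'}$ to be distinct in $\mathbb{CP}^1$: the two ends of a saddle-free strip have transversally different subdominant directions, so the corresponding points of $\mathbb{CP}^1$ cannot collide. Choosing $\epsilon$ small enough that this holds simultaneously for all (finitely many) edges of $T$ gives the claim.

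The main obstacle will be making the WKB asymptotics rigorous and uniform: I need that the limits $z_k(\hbar)$ not merely converge pointwise for each fixed $\hbar$ but depend continuously (indeed analytically, by the earlier cited results of Bakken) on $\hbar$, and that as $\hbar\to0$ along paths in $\mathbb{H}(\epsilon)$ the configuration $(z_k(\hbar))$ approaches the combinatorial configuration dictated by the Stokes graph of $\phi$. This is where the hypothesis ``$\sigma$ lies in a chamber'' — i.e.\ $\phi$ has no saddle trajectories — is essential, since on a wall a saddle connection appears and exactly two of the labels would merge in the limit, which is precisely the non-genericity the theorem excludes; on the open chamber the limiting configuration is strictly generic with respect to $T$, and genericity is an open condition, so it persists for all sufficiently small $\hbar$. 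I would handle this by quoting the precise form of Sibuya's asymptotic expansions for the subdominant solutions in each sector, extracting from them the leading behavior of the relevant cross-ratios (which are the cluster coordinates $\mathcal{X}_T \cong (\mathbb{C}^*)^n$), and checking that these leading terms are finite and nonzero exactly when $\phi$ has no saddle trajectory of the relevant phase — a condition guaranteed throughout the chamber. A secondary technical point is keeping track of the identification between the combinatorial labelling of Stokes sectors by $k \in \mathbb{Z}/(n+3)\mathbb{Z}$ and the marked points on the boundary of the disk, together with the $\mathbb{Z}/(n+3)\mathbb{Z}$-equivariance already noted, so that ``generic with respect to edge $i$ of $T$'' translates correctly into a statement about a specific pair of indices $k,k'$.
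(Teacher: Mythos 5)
Your high-level strategy matches the paper's: identify each edge of the WKB triangulation with a generic trajectory joining two poles (equivalently, a pair of Stokes sectors), and reduce genericity along that edge to linear independence of the subdominant solutions in those two sectors, which by Bakken's Lemma~4.1 is equivalent to distinctness of the two asymptotic values. The key idea --- that the two ends of a saddle-free strip carry ``transversally different subdominant directions'' --- is exactly the one the paper uses.

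The gap is in how you propose to make this rigorous. You argue that as $\hbar\to0$ the configuration $(z_k(\hbar))$ converges to a configuration that is ``strictly generic with respect to $T$,'' and then invoke openness of genericity. This fails: the cluster coordinates of $F_\hbar(\sigma)$ in the chart $\mathcal{X}_T$ behave to leading order like $\exp(Z_\gamma/\hbar)$ for periods $Z_\gamma$ of $\sqrt{P}\,dz$, so for $\sigma$ in the interior of a chamber they tend to $0$ or $\infty$ as $\hbar\to0$; the configuration leaves the torus $\mathcal{X}_T\cong(\mathbb{C}^*)^n$ in the limit (this degeneration is exactly the ``tropical limit'' alluded to in the introduction). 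So there is no generic limit point from which openness could propagate, and even if there were, openness would only give the conclusion for $\hbar$ near one point, not uniformly on the whole sector $\mathbb{H}(\epsilon)$. The paper instead proves genericity for each fixed $\hbar\in\mathbb{H}(\epsilon)$ directly: Borel summability of the WKB solutions for a saddle-free differential (Theorem~\ref{thm:Borelsummability}, resting on Koike--Sch\"afke) produces honest analytic solutions $\Psi_{D,\pm}^{(p_i,a)}$ on each strip; Lemma~\ref{lem:canonicalcoordinatesubdominant} shows each one decays toward one end of the trajectory and hence spans the subdominant line in the corresponding Stokes sector, and Lemma~\ref{lem:horizontalstrip} shows the two are linearly independent. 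That uniform-in-$\hbar$ input --- which is also where the $\epsilon$ comes from, as the sectorial domain of Borel summability --- is what a leading-order-plus-continuity argument cannot supply. (A minor point: the edges of the WKB triangulation are generic trajectories in horizontal strips and half planes, not saddle connections; a saddle-free $\phi$ has no saddle trajectories at all.)
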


The proof of this result is essentially a consequence of the WKB approximation in the theory of differential equations.

\subsection{Organization}

We begin in Section~\ref{sec:QuadraticDifferentials} by reviewing the relevant facts about quadratic differentials and the combinatorics of triangulations. In Section~\ref{sec:ConfigurationsOfPointsInCP1}, we discuss configurations of points in~$\mathbb{CP}^1$ and their parametrization by cross ratios. In Section~\ref{sec:TheMainConstruction}, we define the map $F$ using results of Sibuya and note that it is a local biholomorphism. We also review the theory of exact WKB analysis and use it to prove a version of Theorem~\ref{thm:introchambertotorus}. Finally, in Section~\ref{sec:StabilityConditionsAndTheClusterVariety}, we define stability conditions and the cluster variety in the abstract setting of triangulated categories, and we interpret our main results in this setting.

\section{Quadratic differentials}
\label{sec:QuadraticDifferentials}

\subsection{The trajectory structure of quadratic differentials}

In this section, we review the ideas we need from the theory of quadratic differentials.

\begin{definition}
A meromorphic \emph{quadratic differential} is a meromorphic section of $\omega_{\mathbb{CP}^1}^{\otimes2}$ where $\omega_{\mathbb{CP}^1}$ is the holomorphic cotangent bundle of~$\mathbb{CP}^1$.
\end{definition}

For any local coordinate $z$ on the Riemann sphere, we can represent a quadratic differential $\phi$ by an expression 
\[
\phi(z)=\varphi(z)dz^{\otimes2}
\]
where $\varphi$ is a meromorphic function. If $\phi(w)=\tilde{\varphi}(w)dw^{\otimes2}$ is the expression for $\phi$ in a different local coordinate $w$, then we have $\varphi(z)=\tilde{\varphi}(w)(dw/dz)^2$.

\begin{definition}
If $\phi$ is a quadratic differential, then a zero or pole of $\phi$ is called a \emph{critical point}. We will denote the set of all critical points of $\phi$ by $\Crit(\phi)$. We will denote the subsets of zeros and poles of $\phi$ by $\Zer(\phi)$ and $\Pol(\phi)$, respectively. A zero or simple pole of~$\phi$ is called a \emph{finite critical point}, and any other critical point is called an \emph{infinite critical point}.
\end{definition}

In a neighborhood of any point which is not a critical point of $\phi$, there is a distinguished local coordinate $w$, unique up to transformations of the form $w\mapsto\pm w+\text{constant}$, such that 
\[
\phi(w)=dw^{\otimes2}.
\]
Indeed, if we have $\phi(z)=\varphi(z)dz^{\otimes2}$ for some local coordinate $z$ away from the critical points, then we can define $w$ by 
\[
w(z)=\int^z\sqrt{\varphi(z)}dz
\]
for some choice of square root of $\varphi(z)$. Using this local coordinate, we can define the foliation induced by a quadratic differential.

\begin{definition}
If $\phi$ is a quadratic differential, then a \emph{horizontal trajectory} of $\phi$ is a curve in $\mathbb{CP}^1\setminus\Crit(\phi)$ given by $\Im(w)=\text{constant}$ where $w$ is the distinguished local coordinate. The \emph{horizontal foliation} is the foliation of $\mathbb{CP}^1\setminus\Crit(\phi)$ by horizontal trajectories.
\end{definition}

Let us consider the local behavior of the horizontal trajectories near a critical point of a quadratic differential. First, suppose $p\in\Zer(\phi)$ is a zero of order~$k\geq1$. It is known (see~\cite{Strebel}, Section~6) that there exists a local coordinate $t$ such that $t(p)=0$ and 
\[
\phi(t)=\left(\frac{k+2}{2}\right)^2t^kdt^{\otimes2}.
\]
It follows that, in a neighborhood of~$p$, the distinguished local coordinate is $w=t^{\frac{1}{2}(k+2)}$. The horizontal trajectories determined by this local coordinate form a $(k+2)$-pronged singularity as illustrated below for $k=1,2$.
\[
\xy /l3pc/:
(1,0)*{}="O"; 
(-0.35,0.72)*{}="U"; 
(-0.75,-0.05)*{}="X1";
(-0.6,0.2)*{}="X2";
(-0.45,0.45)*{}="X3"; 
(-0.2,1)*{}="X4"; 
(0,1.25)*{}="X5"; 
(0.15,1.5)*{}="X6"; 
(1.85,1.5)*{}="Y1";
(2,1.25)*{}="Y2";
(2.2,1)*{}="Y3";
(2.45,0.45)*{}="Y4";
(2.6,0.2)*{}="Y5";
(2.75,-0.05)*{}="Y6";
(1.85,-1.5)*{}="Z1";
(1.55,-1.5)*{}="Z2";
(1.25,-1.5)*{}="Z3";
(0.75,-1.5)*{}="Z4";
(0.45,-1.5)*{}="Z5";
(0.15,-1.5)*{}="Z6";
(2.35,0.72)*{}="V"; 
(1,-1.5)*{}="W"; 
"O";"U" **\dir{-}; 
"O";"V" **\dir{-}; 
"O";"W" **\dir{-}; 
"X4";"Y3" **\crv{(0.9,0.2) & (1.1,0.2)};
"X5";"Y2" **\crv{(0.9,0.5) & (1.1,0.5)};
"X6";"Y1" **\crv{(0.9,0.8) & (1.1,0.8)};
"Y4";"Z3" **\crv{(1.35,0) & (1.15,0)};
"Y5";"Z2" **\crv{(1.5,-0.2) & (1.5,-0.3)};
"Y6";"Z1" **\crv{(1.65,-0.4) & (1.85,-0.6)};
"Z4";"X3" **\crv{(0.85,0) & (0.65,0)};
"Z5";"X2" **\crv{(0.5,-0.3) & (0.5,-0.2)};
"Z6";"X1" **\crv{(0.15,-0.6) & (0.35,-0.4)};
(1,0)*{\times};
(1,2)*{k=1};
\endxy
\qquad
\xy /l3pc/:
(1,0)*{}="O"; 
(1,-1.75)*{}="T"; 
(-0.75,0)*{}="U"; 
(1,1.75)*{}="V"; 
(2.75,0)*{}="W"; 
(-0.75,-0.75)*{}="U1";
(-0.75,-0.5)*{}="U2";
(-0.75,-0.25)*{}="U3"; 
(-0.75,0.25)*{}="U4";
(-0.75,0.5)*{}="U5";
(-0.75,0.75)*{}="U6";
(0.15,1.75)*{}="V1";
(0.45,1.75)*{}="V2";
(0.75,1.75)*{}="V3";
(1.25,1.75)*{}="V4"; 
(1.55,1.75)*{}="V5"; 
(1.85,1.75)*{}="V6"; 
(2.75,0.75)*{}="W1";
(2.75,0.5)*{}="W2";
(2.75,0.25)*{}="W3";
(2.75,-0.25)*{}="W4";
(2.75,-0.5)*{}="W5";
(2.75,-0.75)*{}="W6";
(1.85,-1.75)*{}="T1";
(1.55,-1.75)*{}="T2";
(1.25,-1.75)*{}="T3";
(0.75,-1.75)*{}="T4";
(0.45,-1.75)*{}="T5";
(0.15,-1.75)*{}="T6";
"O";"T" **\dir{-}; 
"O";"U" **\dir{-}; 
"O";"V" **\dir{-}; 
"O";"W" **\dir{-}; 
"U4";"V3" **\crv{(0.8,0.2) & (0.8,0.2)};
"U5";"V2" **\crv{(0.5,0.5) & (0.5,0.5)};
"U6";"V1" **\crv{(0.15,0.7) & (0.15,0.7)};
"V4";"W3" **\crv{(1.2,0.2) & (1.2,0.2)};
"V5";"W2" **\crv{(1.5,0.5) & (1.5,0.5)};
"V6";"W1" **\crv{(1.85,0.7) & (1.85,0.7)};
"W4";"T3" **\crv{(1.2,-0.2) & (1.2,-0.2)};
"W5";"T2" **\crv{(1.5,-0.5) & (1.5,-0.5)};
"W6";"T1" **\crv{(1.85,-0.7) & (1.85,-0.7)};
"T4";"U3" **\crv{(0.8,-0.2) & (0.8,-0.2)};
"T5";"U2" **\crv{(0.5,-0.5) & (0.5,-0.5)};
"T6";"U1" **\crv{(0.15,-0.7) & (0.15,-0.7)};
(1,0)*{\times};
(1,2.25)*{k=2};
\endxy
\qquad
\dots
\]

On the other hand, suppose that $p\in\Pol(\phi)$ is a pole of order $m>2$. In this case, a similar argument (see~\cite{Strebel}, Section~6) shows that there is a neighborhood $U$ of $p$ and a collection of $m-2$ distinguished tangent directions $v_i$ at $p$ such that any horizontal trajectory that enters $U$ eventually tends to $p$ and is asymptotic to one of the $v_i$. We illustrate this below for $m=5,6$.
\[
\xy /l3pc/:
{\xypolygon3"T"{~:{(2,0):}~>{}}},
{\xypolygon3"S"{~:{(1.5,0):}~>{}}},
{\xypolygon3"R"{~:{(1,0):}~>{}}},
(1,0)*{}="O"; 
(-0.35,0.72)*{}="U"; 
(2.35,0.72)*{}="V"; 
(1,-1.5)*{}="W"; 
"O";"U" **\dir{-}; 
"O";"V" **\dir{-}; 
"O";"W" **\dir{-}; 
"O";"T1" **\crv{(2,0.75) & (2.25,1.85)};
"O";"T1" **\crv{(0,0.75) & (-0.25,1.85)};
"O";"T2" **\crv{(0,0.4) & (-1.2,0.25)};
"O";"T2" **\crv{(1,-1) & (0,-2.2)};
"O";"T3" **\crv{(1,-1) & (2,-2.2)};
"O";"T3" **\crv{(2,0.4) & (3.2,0.25)};
"O";"S1" **\crv{(1.75,0.56) & (2,1.5)};
"O";"S1" **\crv{(0.25,0.56) & (0,1.5)};
"O";"S2" **\crv{(0,0.3) & (-0.9,0.19)};
"O";"S2" **\crv{(0.9,-0.8) & (0.3,-1.7)};
"O";"S3" **\crv{(2,0.3) & (2.9,0.19)};
"O";"S3" **\crv{(1.1,-0.8) & (1.7,-1.7)};
"O";"R1" **\crv{(1.5,0.5) & (1.75,1)};
"O";"R1" **\crv{(0.5,0.5) & (0.25,1)};
"O";"R2" **\crv{(0.5,0.1) & (-0.3,0.25)};
"O";"R2" **\crv{(0.75,-0.8) & (0.5,-1)};
"O";"R3" **\crv{(1.5,0.1) & (2.3,0.25)};
"O";"R3" **\crv{(1.25,-0.8) & (1.5,-1)};
(1,0)*{\bullet};
(1,2.25)*{m=5};
\endxy
\qquad
\xy /l3pc/:
{\xypolygon4"A"{~:{(2,0):}~>{}}},
{\xypolygon4"B"{~:{(1.5,0):}~>{}}},
{\xypolygon4"C"{~:{(1,0):}~>{}}},
(1,0)*{}="O"; 
(1,-1.75)*{}="T"; 
(-0.75,0)*{}="U"; 
(1,1.75)*{}="V"; 
(2.75,0)*{}="W"; 
"O";"T" **\dir{-}; 
"O";"U" **\dir{-}; 
"O";"V" **\dir{-}; 
"O";"W" **\dir{-}; 
"O";"A1" **\crv{(1,1.5) & (1.5,2.25)};
"O";"A1" **\crv{(2.5,0) & (3.25,0.5)};
"O";"A2" **\crv{(1,1.5) & (0.5,2.25)};
"O";"A2" **\crv{(-0.5,0) & (-1.25,0.5)};
"O";"A3" **\crv{(1,-1.5) & (0.5,-2.25)};
"O";"A3" **\crv{(-0.5,0) & (-1.25,-0.5)};
"O";"A4" **\crv{(1,-1.5) & (1.5,-2.25)};
"O";"A4" **\crv{(2.5,0) & (3.25,-0.5)};
"O";"B1" **\crv{(1,1) & (1.5,1.6)};
"O";"B1" **\crv{(2,0) & (2.6,0.5)};
"O";"B2" **\crv{(1,1) & (0.5,1.6)};
"O";"B2" **\crv{(0,0) & (-0.6,0.5)};
"O";"B3" **\crv{(1,-1) & (0.5,-1.6)};
"O";"B3" **\crv{(0,0) & (-0.6,-0.5)};
"O";"B4" **\crv{(1,-1) & (1.5,-1.6)};
"O";"B4" **\crv{(2,0) & (2.6,-0.5)};
"O";"C1" **\crv{(1,0.75) & (1.25,1)};
"O";"C1" **\crv{(1.75,0) & (2,0.25)};
"O";"C2" **\crv{(1,0.75) & (0.75,1)};
"O";"C2" **\crv{(0.25,0) & (0,0.25)};
"O";"C3" **\crv{(1,-0.75) & (0.75,-1)};
"O";"C3" **\crv{(0.25,0) & (0,-0.25)};
"O";"C4" **\crv{(1,-0.75) & (1.25,-1)};
"O";"C4" **\crv{(1.75,0) & (2,-0.25)};
(1,0)*{\bullet};
(1,2.25)*{m=6};
\endxy
\qquad
\dots
\]

Next we will examine the global structure of the horizontal foliation. Three types of horizontal trajectories will play a role in our analysis.

\begin{definition}
Let $\phi$ be a quadratic differential.
\begin{enumerate}
\item A \emph{saddle trajectory} is a horizontal trajectory which connects finite critical points of~$\phi$.
\item A \emph{separating trajectory} is a horizontal trajectory which connects a finite and an infinite critical point of~$\phi$.
\item A \emph{generic trajectory} is a horizontal trajectory which connects infinite critical points of~$\phi$.
\end{enumerate}
\end{definition}

As explained in the introduction, any quadratic differential that corresponds to a stability condition in our setting has simple zeros and a single pole of order $n+5$ in~$\mathbb{CP}^1$. This leads to a simple classification of the possible horizontal trajectories.

\begin{proposition}
\label{prop:noclosedrecurrent}
Let $\phi$ be a quadratic differential with a single pole of order $n+5$ in~$\mathbb{CP}^1$. Then every horizontal trajectory of~$\phi$ is either a saddle trajectory, a separating trajectory, or a generic trajectory.
\end{proposition}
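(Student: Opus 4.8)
The plan is to invoke the global structure theory of horizontal foliations and then exclude the two ``bad'' cases by a Gauss--Bonnet argument on $\mathbb{CP}^1$. First I would recall (Strebel~\cite{Strebel}; see also Bridgeland--Smith~\cite{BridgelandSmith}) that every horizontal trajectory of a quadratic differential on a compact Riemann surface either is closed, or has a ray contained in a \emph{spiral domain} (a maximal open region swept out by recurrent trajectories), or else tends to a critical point along each of its two rays. A trajectory of the last type connects two critical points, so it is a saddle, separating, or generic trajectory according to whether its endpoints are finite or infinite. Closed trajectories fill out maximal annuli, the \emph{ring domains}. Hence the proposition follows once I show that $\phi$ admits no ring domain and no spiral domain.

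The key lemma I would prove is that there is no closed disk $D\subseteq\mathbb{CP}^1$ whose boundary is a finite concatenation of horizontal saddle trajectories of $\phi$ and whose interior contains no pole of $\phi$. Granting such a $D$, I would put the flat metric $|\phi|$ on it: since the unique pole lies outside $D$, this metric is flat on $D$ except for conical singularities of angle $(k+2)\pi\geq 3\pi$ at the zeros of $\phi$ (by the normal form near an order-$k$ zero recalled above), so the concentrated curvature at each such point is strictly negative. The boundary $\partial D$ is geodesic, being horizontal; and at a zero of $\phi$ on $\partial D$ the two incident boundary arcs run along separatrices, which at an order-$k$ zero cut the flat cone into $k+2$ sectors of angle $\pi$, so the interior angle of $D$ there is a positive multiple of $\pi$, hence $\geq\pi$. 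Then Gauss--Bonnet gives
\[
2\pi = 2\pi\chi(D) = \int_D K\,dA + \int_{\partial D}\kappa_g\,ds + \sum_i(\pi-\alpha_i) \leq 0,
\]
which is absurd (the three summands are $\leq 0$, $=0$, and $\leq 0$; and if $\partial D$ has no corners then $\int_D K=2\pi$ with $K\leq 0$ is likewise impossible).

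To finish, I would produce such a disk in each of the two cases. For a ring domain $R$: because $\phi$ has a single pole of order $n+5\neq 2$, neither boundary component of $R$ can shrink to a point, so each is a closed loop of saddle trajectories; since $R$ is an embedded annulus in $\mathbb{CP}^1\cong S^2$, its two boundary circles bound disjoint disks, the pole lies in only one of them, and the other one contradicts the lemma. For a spiral domain $\mathcal{M}$: its frontier is again a finite union of saddle trajectories, and $\overline{\mathcal{M}}$ misses the pole, because a punctured neighborhood of the pole is swept out by trajectories tending to it (as recalled above) and so contains no recurrent trajectory. Thus $\overline{\mathcal{M}}$ is a compact connected subset of $\mathbb{C}=\mathbb{CP}^1\setminus\{\infty\}$; filling in its bounded complementary components and discarding any ``whiskers'' should yield a closed disk, bounded by saddle trajectories and missing $\infty$, that again contradicts the lemma.

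The main obstacle is this last step: one must verify that the ``outer boundary'' of the spiral domain really is a simple closed curve of horizontal saddle trajectories whose interior angles are multiples of $\pi$, so that the Gauss--Bonnet input applies — and this is precisely where the genus-$0$ hypothesis, together with $\phi$ having a single high-order pole, is used. The ring-domain case and the curvature computation are comparatively routine.
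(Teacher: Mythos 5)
Your proposal is correct in outline but takes a genuinely different route from the paper, whose entire proof is a citation of Strebel's Theorem~14.2.2 (no closed trajectories) and Theorem~15.2 (no recurrent trajectories). You instead make the exclusion self-contained: starting from the structure theory (every trajectory is saddle/separating/generic, closed, or recurrent), you reduce both bad cases to the non-existence of a closed disk $D\subseteq\mathbb{CP}^1$ bounded by saddle trajectories and containing no pole, and kill that by Gauss--Bonnet for the flat metric $|\phi|$, using that interior cone points have angle $(k+2)\pi\geq 3\pi$ (negative concentrated curvature), that horizontal arcs are geodesics, and that boundary angles at zeros are positive multiples of $\pi$. This is the standard Euler-characteristic/curvature-count argument (compare Section~3 of~\cite{BridgelandSmith}), and it has the virtue of showing exactly where the hypotheses enter: genus zero and a single pole of order $n+5>2$ force any ring or spiral domain to be screened off from the pole by a geodesic polygon bounding a pole-free disk. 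Note also that your argument does not need $\phi$ to have simple zeros, matching the generality of the statement.

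The cost is precisely the point you flag yourself: the frontier of a ring or spiral domain need not be an embedded circle --- edges of the critical graph may be traversed twice and there may be dead-end ``whiskers'' --- so producing the disk $D$ and checking that every interior angle is a \emph{positive} multiple of $\pi$ (an angle contribution $\pi-\alpha_i>0$ would wreck the inequality) requires the combinatorial analysis of the critical graph that is the actual content of Strebel's proofs. Two remarks to shore this up: (i) the correct formulation of Gauss--Bonnet here is for the boundary traversed as a closed geodesic polygon, which remains valid when the polygon is not embedded, and an inward whisker ending at an interior zero of order $k$ contributes interior angle equal to the full cone angle $(k+2)\pi$, hence $\pi-\alpha\leq-2\pi<0$, so whiskers are harmless rather than something to be ``discarded''; (ii) in the ring-domain case you should argue from the closure $\overline{R}$ rather than from ``two boundary circles'', since the two frontier components are merely disjoint connected subsets of the critical graph, exactly one of which separates the pole from~$R$. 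With these points made precise your argument is a complete replacement for the citations in the paper's proof.
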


\begin{proof}
This follows from Theorem~14.2.2 of~\cite{Strebel}, which implies that there are no closed trajectories in the horizontal foliation, and Theorem~15.2 of~\cite{Strebel}, which implies that there are no recurrent trajectories.
\end{proof}

We will be interested in the following types of regions determined by the horizontal foliation.

\Needspace*{2\baselineskip}
\begin{definition} \mbox{}
\begin{enumerate}
\item A \emph{half plane} is a connected component of the complement of the separating trajectories in $\mathbb{CP}^1$ which is mapped by the distinguished local coordinate to 
\[
\{w\in\mathbb{C}:\Im(w)>0\}.
\]
The trajectories in a half plane are generic, connecting a fixed pole of order $>2$ to itself. The boundary is composed of saddle trajectories and separating trajectories.

\item A \emph{horizontal strip} is a connected component of the complement of the separating trajectories in $\mathbb{CP}^1$ which is mapped by the distinguished local coordinate to
\[
\{w\in\mathbb{C}:a<\Im(w)<b\}.
\]
The trajectories in a horizontal strip are generic, connecting two (not necessarily distinct) poles. The boundary is composed of saddle trajectories and separating trajectories.
\end{enumerate}
\end{definition}

We have seen that there are only finitely many horizontal trajectories incident to any zero of a quadratic differential. It follows that there are only finitely many saddle trajectories and separating trajectories. A quadratic differential will be called \emph{saddle-free} if it has no saddle trajectories.

\begin{proposition}
\label{prop:horizontalstripdecomposition}
Suppose $\phi$ is a quadratic differential with simple zeros and a single pole of order $n+5$. If $\phi$ is saddle-free, then after removing the finitely many separating trajectories, we obtain an open surface which is a union of horizontal strips and half planes.
\end{proposition}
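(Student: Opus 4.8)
The plan is to reduce the statement to the classification of a single component of the complement of the separating graph, and then apply the general trajectory‑structure theory together with Proposition~\ref{prop:noclosedrecurrent}. First I would record that, since $\phi$ is saddle‑free, Proposition~\ref{prop:noclosedrecurrent} forces every horizontal trajectory of $\phi$ to be separating or generic. Every separating trajectory emanates from a zero, and a zero of order $k$ is a $(k+2)$‑pronged singularity, so only finitely many separating trajectories occur; let $\Gamma\subseteq\mathbb{CP}^1$ be the finite graph formed by the zeros of $\phi$ together with these separating trajectories (each a closed arc with one endpoint a zero, the other running out to the pole). Removing $\Gamma$ gives an open surface, and it suffices to show that every connected component $U$ of $\mathbb{CP}^1\setminus\Gamma$ is a half plane or a horizontal strip.

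Next I would fix such a $U$. By construction $U$ contains no critical point, so the distinguished coordinate $w=\int\sqrt{\varphi}\,dz$ is defined and locally biholomorphic on $U$ and carries the horizontal foliation of $U$ to a foliation of a planar region by horizontal lines; moreover every trajectory in $U$ is generic, hence runs from the order‑$(n+5)$ pole back to itself. The goal is then to show that $w$ globalizes to a single‑valued biholomorphism of $U$ onto a horizontally convex domain $D\subseteq\mathbb{C}$, i.e. a domain meeting each line $\{\Im w=c\}$ in an interval. Granting this, invariance of $D$ under real translation along trajectories forces $D=\{a<\Im w<b\}$ for some $-\infty\le a<b\le+\infty$; the case $a=-\infty$, $b=+\infty$ cannot occur because $\phi$ has zeros and hence $\Gamma\neq\emptyset$; the cases with exactly one of $a,b$ finite are half planes; and the case with both finite are horizontal strips, matching the definitions above. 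The cleanest way to carry this out in the write‑up is to invoke Strebel's structure theorem for the trajectory decomposition (see \cite{Strebel}), which decomposes $\mathbb{CP}^1\setminus\Gamma$ into half plane, strip, ring, and spiral (density) domains, and then to delete the ring and spiral domains from this list: they are swept out by closed and by recurrent trajectories respectively, and Proposition~\ref{prop:noclosedrecurrent} says that neither exists.

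The step I expect to be the main obstacle is the globalization asserted in the second paragraph — verifying that on each component the distinguished coordinate extends to an honest embedding onto a horizontally convex planar region, equivalently that the generic trajectories sweep straight across $U$ without monodromy in $\sqrt{\varphi}$ and without pathological accumulation on $\Gamma$. This is precisely the content that the general theory of the trajectory structure of quadratic differentials is built to handle, so rather than reproving it I would lean on \cite{Strebel} for the decomposition into the four types of domains and use Proposition~\ref{prop:noclosedrecurrent} purely to remove the ring and spiral domains.
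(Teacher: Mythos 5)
Your proposal is correct and ultimately takes the same route as the paper: the paper's proof simply cites Strebel's structure theorem (Section~11.4 of~\cite{Strebel}) for the decomposition of the complement of the separating trajectories and uses Proposition~\ref{prop:noclosedrecurrent} to exclude ring and spiral domains. The additional detail you sketch about globalizing the distinguished coordinate on each component is exactly the content of Strebel's theorem, which both you and the paper elect to quote rather than reprove.
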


\begin{proof}
This follows from Section~11.4 of~\cite{Strebel} and Proposition~\ref{prop:noclosedrecurrent}.
\end{proof}

\subsection{The moduli space of framed differentials}
\label{sec:TheModuliSpaceOfFramedDifferentials}

In this subsection, we explain how to associate a moduli space of quadratic differentials to a disk with marked points. Throughout our discussion, we will denote by $\mathbb{D}$ a closed disk and by $\mathbb{M}$ a set of $n+3$ distinct marked points contained in the boundary of~$\mathbb{D}$.

Suppose $\phi$ is a quadratic differential with simple zeros and a single pole $p$ of order $n+5$. We have seen that the horizontal foliation determines $n+3$ distinguished tangent directions at~$p$. Therefore, if we take an oriented real blowup at~$p$, we get a disk~$\widetilde{\mathbb{D}}$, and the distinguished tangent directions give rise to a set $\widetilde{\mathbb{M}}$ of $n+3$ marked points on the boundary of~$\widetilde{\mathbb{D}}$. A framing is an extra datum that allows us to compare this with the standard pair $(\mathbb{D},\mathbb{M})$ chosen above.

\begin{definition}
Let $\phi$ be a quadratic differential with simple zeros and a single a single pole of order $n+5$. A \emph{framing} for $\phi$ is defined to be an orientation preserving diffeomorphism $f:\widetilde{\mathbb{D}}\rightarrow\mathbb{D}$ inducing a bijection $\widetilde{\mathbb{M}}\cong\mathbb{M}$ and considered up to diffeomorphisms of~$\widetilde{\mathbb{D}}$ isotopic to the identity. Two framed differentials $(\phi_1,f_1)$ and $(\phi_2,f_2)$ are said to be \emph{equivalent} if there is a biholomorphism $g:\mathbb{CP}^1\rightarrow\mathbb{CP}^1$ satisfying $g^*\phi_2=\phi_1$ which commutes with the framing in the obvious way. We denote by $\Quad_{fr}(\mathbb{D},\mathbb{M})$ the space of equivalence classes of framed quadratic differentials.
\end{definition}

Let $(\phi,f)$ be a point in the space $\Quad_{fr}(\mathbb{D},\mathbb{M})$. By definition, $\phi$ is a quadratic differential with simple zeros and a single pole of order $n+5$. Since this is well defined only up to equivalence, we can assume that the pole is $\infty\in\mathbb{CP}^1$. Then we can write 
\begin{align}
\label{eqn:polynomialdifferential}
\phi(z)=P(z)dz^{\otimes2}
\end{align}
where $P(z)$ is a polynomial of degree $n+1$ having simple roots. After rescaling $z$ by an appropriate factor, we can assume that $P(z)$ is monic.

\begin{proposition}
\label{prop:distinguishedtangents}
If $\phi$ is the quadratic differential given by~\eqref{eqn:polynomialdifferential} where $P(z)$ is a monic polynomial of degree $n+1$, then the distinguished tangent vectors determined by~$\phi$ are the tangent vectors to the rays $\mathbb{R}_{>0}\exp(2\pi i k/(n+3))$ at $\infty$ for $k=1,\dots,n+3$.
\end{proposition}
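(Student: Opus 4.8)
The plan is to reduce the statement to the local analysis of horizontal trajectories near a pole of order $m > 2$ that was recalled just before the proposition. Recall that near such a pole there is a local coordinate in which $\phi$ takes a standard form, and the $m-2$ distinguished tangent directions are determined by the leading behavior of $\phi$ there. Here $m = n+5$, so $m-2 = n+3$, which matches the count of rays in the statement. The only task is therefore to identify \emph{which} $n+3$ directions at $\infty$ arise, and for this I would pass to the coordinate $u = 1/z$ centered at the pole and compute the distinguished coordinate $w = \int \sqrt{\varphi}\, du$ to leading order.

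First I would rewrite $\phi(z) = P(z)\, dz^{\otimes 2}$ in the coordinate $u = 1/z$. Since $dz = -u^{-2}\, du$, we get $\phi(u) = P(1/u)\, u^{-4}\, du^{\otimes 2}$, and because $P$ is monic of degree $n+1$ we have $P(1/u) = u^{-(n+1)}(1 + O(u))$, so the coefficient function is $\tilde\varphi(u) = u^{-(n+5)}(1 + O(u))$. Thus $\phi$ has a pole of order $n+5$ at $u = 0$, as expected. Next I would compute the distinguished local coordinate $w(u) = \int^u \sqrt{\tilde\varphi(u)}\, du$. To leading order $\sqrt{\tilde\varphi(u)} = u^{-(n+5)/2}(1 + O(u))$, so integrating gives
\[
w(u) = \frac{2}{n+3}\, u^{-(n+3)/2}\bigl(1 + O(u)\bigr),
\]
up to an additive constant and an irrelevant sign. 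A horizontal trajectory near the pole is a curve $\Im(w) = \text{constant}$, and as the trajectory approaches the pole $|w| \to \infty$; the asymptotic directions in which this can happen are governed by the rays along which $w$ is \emph{real} (more precisely, the directions where the imaginary part stays bounded relative to the real part), i.e.\ where $u^{-(n+3)/2} \in \mathbb{R}$.

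I would then unwind this condition in terms of the original coordinate $z = 1/u$. The condition $u^{-(n+3)/2} \in \mathbb{R}$, i.e.\ $z^{(n+3)/2} \in \mathbb{R}$, means $\tfrac{n+3}{2}\arg z \in \pi\mathbb{Z}$, hence $\arg z \in \tfrac{2\pi}{n+3}\mathbb{Z}$. This singles out exactly the $n+3$ directions $\arg z = 2\pi k/(n+3)$ for $k = 1, \dots, n+3$, which are precisely the tangent directions at $\infty$ of the rays $\mathbb{R}_{>0}\exp(2\pi i k/(n+3))$. Among these $n+3$ directions, half of them (those where $w \to +\infty$ along the trajectory versus $w \to -\infty$) are the ones to which trajectories entering a neighborhood of the pole are actually asymptotic — but the local model recalled before the proposition already tells us there are exactly $m-2 = n+3$ distinguished directions, so the bookkeeping is forced and all $n+3$ of these directions occur. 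To finish I would check that the $O(u)$ correction terms do not perturb the asymptotic direction: since the correction to $w$ grows strictly more slowly than the leading term $u^{-(n+3)/2}$, a standard estimate shows each trajectory is asymptotic to one of the rays above, with no change to the set of directions.

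The main obstacle — really the only subtle point — is the last step: rigorously controlling the subleading terms in the expansion of $w(u)$ to be sure that the genuine (curved) horizontal trajectories of $\phi$ are asymptotic to these straight rays and not merely to curves with the same tangent direction, and that no direction is gained or lost. This is exactly the content of the local normal form near a higher-order pole cited from Section~6 of~\cite{Strebel}; invoking that result, together with the leading-order computation above to pin down \emph{which} directions appear, completes the proof.
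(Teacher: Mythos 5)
Your proposal is correct and takes essentially the same route as the paper: both rest on the leading-order behaviour $w(z)\sim\tfrac{2}{n+3}z^{(n+3)/2}$ of the distinguished coordinate (the paper works in the affine coordinate and rules out every ray through a non-$(n+3)$rd root of unity because $|\Im w|\to\infty$ there, dominating the lower-order terms; your computation at $u=1/z$ is the same estimate), combined with the count of $m-2=n+3$ distinguished directions from the local theory at a higher-order pole. The only blemish is the parenthetical about ``half'' of the directions, which is meaningless when $n+3$ is odd and in any case superfluous, since your counting argument already forces all $n+3$ candidate rays to occur.
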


\begin{proof}
Consider a tangent vector to $\infty$ which is not of the type appearing in the statement of the proposition. Such a tangent vector is tangent to a ray $\mathbb{R}_{>0}z_0$ where $|z_0|=1$ and $z_0$ is not an $(n+3)$rd root of unity. Let us study how the imaginary part of the distinguished local coordinate varies along this ray. Away from $\infty$, the differential $\phi$ can be described by the local expression~\eqref{eqn:polynomialdifferential}. The leading term of $P(z)$ is $z^{n+1}$, so this local coordinate can be written 
\[
w(z)=\frac{2}{n+3}z^{\frac{n+3}{2}}+\text{lower terms}.
\]
If we set $z=\lambda z_0$ for $\lambda\in\mathbb{R}_{>0}$, then 
\[
\left|\Im\left(\frac{2}{n+3}z^{\frac{n+3}{2}}\right)\right| = \lambda^{\frac{n+3}{2}}\left|\Im\left(\frac{2}{n+3}z_0^{\frac{n+3}{2}}\right)\right| \rightarrow\infty
\]
as $\lambda\rightarrow\infty$. Moreover, this expression tends to infinity faster than the imaginary part of any lower term of $w(z)$. It follows that the expression $|\Im(w(z))|$ increases without bound as $\lambda\rightarrow\infty$. In particular, the ray $\mathbb{R}_{>0}z_0$ cannot be asymptotic to any horizontal trajectory. This completes the proof.
\end{proof}

Fix a map $f_0$ from the set of $(n+3)$rd roots of unity to~$\mathbb{M}$ which preserves the cyclic ordering of these sets. By Proposition~\ref{prop:distinguishedtangents}, this map $f_0$ determines a framing of~$\phi$. After multiplying $z$ by an appropriate $(n+3)$rd root of unity, we can assume that the framing $f$ coincides with~$f_0$. Finally, by applying a translation to the coordinate~$z$, we can assume that the roots of $P(z)$ sum to zero. That is, 
\[
P(z)=\prod_{i=1}^{n+1}(z-\alpha_i)
\]
where 
\[
\sum_{i=1}^{n+1}\alpha_i=0.
\]
Expanding this product, we can write 
\[
P(z)=z^{n+1}+a_nz^n+a_{n-1}z^{n-1}+\dots+a_1z+a_0
\]
where $a_i\in\mathbb{C}$ for $i=0,\dots,n$, and we have $a_n=-\sum_{i=1}^{n+1}\alpha_i=0$. Thus the space of monic polynomials $P(z)$ of degree $n+1$ such that the sum of the roots of $P(z)$ is zero is identified with $\mathbb{C}^n=\{(a_0,\dots,a_{n-1}):a_i\in\mathbb{C}\text{ for all $i$}\}$. Since our polynomials have simple roots, we consider the Zariski open subset $\mathbb{C}^n\setminus\Delta$ where 
\[
\Delta=\left\{\prod_{i<j}(\alpha_i-\alpha_j)^2=0\right\}
\]
is the discriminant locus. Summarizing, we have the following result.

\begin{proposition}
\label{prop:identificationquad}
The moduli space $\Quad_{fr}(\mathbb{D},\mathbb{M})$ is isomorphic to $\mathbb{C}^n\setminus\Delta$.
\end{proposition}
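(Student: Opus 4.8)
The plan is to package the normalization procedure carried out just above into an explicit inverse pair of maps between $\Quad_{fr}(\mathbb{D},\mathbb{M})$ and $\mathbb{C}^n\setminus\Delta$, the only real content being that the normalization is \emph{rigid}, i.e.\ leaves no residual freedom. Define $\Phi\colon\Quad_{fr}(\mathbb{D},\mathbb{M})\to\mathbb{C}^n\setminus\Delta$ by sending an equivalence class $[(\phi,f)]$ to the tuple $(a_0,\dots,a_{n-1})$ of coefficients of the normalized polynomial $P(z)=z^{n+1}+a_{n-1}z^{n-1}+\dots+a_1z+a_0$, and $\Psi$ in the other direction by sending $(a_0,\dots,a_{n-1})$ to the class of $(P(z)\,dz^{\otimes2},f_0)$, where $f_0$ is the fixed cyclic-order-preserving identification of the $(n+3)$rd roots of unity with $\mathbb{M}$ (a framing, by Proposition~\ref{prop:distinguishedtangents}). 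This tuple lands in $\mathbb{C}^n\setminus\Delta$ precisely because having simple roots is equivalent to non-vanishing of the discriminant.

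The crux is checking that $(a_0,\dots,a_{n-1})$ is well defined on equivalence classes, which amounts to $\Psi\circ\Phi=\mathrm{id}$. Suppose $(\phi_1,f_0)$ and $(\phi_2,f_0)$ are two normalized representatives related by a biholomorphism $g\colon\mathbb{CP}^1\to\mathbb{CP}^1$ with $g^*\phi_2=\phi_1$ commuting with the framing. First, since each $\phi_i$ has its unique pole at $\infty$, $g$ must fix $\infty$, hence is affine, $g(z)=az+b$ with $a\in\mathbb{C}^*$. Writing $\phi_i=P_i(z)\,dz^{\otimes2}$, the identity $g^*\phi_2=\phi_1$ reads $P_1(z)=a^2P_2(az+b)$; comparing leading coefficients of the two monic polynomials gives $a^{n+3}=1$. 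Next, by Proposition~\ref{prop:distinguishedtangents} the distinguished tangent directions at $\infty$ are the rays through the $(n+3)$rd roots of unity, so multiplication by a nontrivial $(n+3)$rd root of unity cyclically permutes them; such a rotation permutes the marked points of $\widetilde{\mathbb{M}}$ cyclically and therefore cannot commute with the common framing $f_0$. Hence $a=1$. Finally, with $a=1$ we have $P_1(z)=P_2(z+b)$, so the sum of the roots of $P_1$ equals the sum of the roots of $P_2$ minus $(n+1)b$; since both sums vanish, $b=0$, so $g=\mathrm{id}$ and the normalized coefficients agree. The reverse composite $\Phi\circ\Psi=\mathrm{id}$ is immediate from the construction, so $\Phi$ is a bijection.

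To upgrade the bijection to an isomorphism, I would observe that in any holomorphic (respectively algebraic) family of framed differentials of this type the coefficients $a_i$ of the normalized polynomial vary holomorphically (respectively algebraically), and conversely $P(z)\,dz^{\otimes2}$ depends algebraically on $(a_0,\dots,a_{n-1})$; hence $\Phi$ and $\Psi$ are morphisms for the natural structures, and the complex-manifold (equivalently, algebraic-variety) structure on $\Quad_{fr}(\mathbb{D},\mathbb{M})$ is the one transported from $\mathbb{C}^n\setminus\Delta$ along $\Phi$.

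I expect the only delicate point to be the argument that ruling out the finite ambiguity by $(n+3)$rd roots of unity — which survives the ``monic'' normalization — is exactly what the framing condition accomplishes. One must be careful here that a rotation by a primitive $(n+3)$rd root of unity genuinely changes the isotopy class of the framing diffeomorphism, rather than being absorbed into the isotopies allowed in the definition of a framing; this is where Proposition~\ref{prop:distinguishedtangents} is essential, since it pins down the distinguished tangent directions concretely and shows the rotation acts on $\widetilde{\mathbb{M}}$ as a nontrivial cyclic shift.
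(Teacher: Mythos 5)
Your proposal is correct and follows essentially the same route as the paper, which establishes the identification by the same normalization procedure (M\"obius transformation placing the pole at $\infty$, rescaling to make $P$ monic, a residual $(n+3)$rd root of unity fixed by the framing, and a translation centering the roots). The only difference is that you make explicit the rigidity check --- that a biholomorphism relating two normalized representatives and compatible with the framing must be the identity --- which the paper leaves implicit; your verification of it is accurate.
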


Note that the space parametrizing \emph{unframed} quadratic differentials up to the action of $PGL_2(\mathbb{C})$ is identified with the quotient of $\mathbb{C}^n\setminus\Delta$ by $\mathbb{Z}/(n+3)\mathbb{Z}$ where the latter acts by changing the framing. Further discussion of the resulting moduli space can be found in Section~12 of~\cite{BridgelandSmith}.

\subsection{Ideal triangulations and the WKB triangulation}

Next we define the WKB triangulation of a quadratic differential. We begin with some general background on ideal triangulations from~\cite{FST}.

\begin{definition}
An \emph{arc} in $(\mathbb{D},\mathbb{M})$ is a smooth path $\gamma$ in $\mathbb{D}$ connecting points of $\mathbb{M}$ whose interior lies in the interior of $\mathbb{D}$ and which has no self-intersections. We also require that $\gamma$ is not homotopic, relative to its endpoints, to a single point or to a path in $\partial\mathbb{D}$ whose interior contains no marked points. Two arcs are considered to be equivalent if they are related by a homotopy through such arcs. A path that connects two marked points and lies entirely on the boundary of~$\mathbb{D}$ without passing through a third marked point is called a \emph{boundary segment}.
\end{definition}

\begin{definition}
Two arcs are said to be \emph{compatible} if there exist curves in their respective homotopy classes that do not intersect in the interior of $\mathbb{D}$. A maximal collection of pairwise compatible arcs is called an \emph{ideal triangulation} of~$(\mathbb{D},\mathbb{M})$. The arcs of an ideal triangulation cut $\mathbb{D}$ into regions called \emph{ideal triangles}. We will write $J=J^T$ for the set of all arcs in an ideal triangulation~$T$.
\end{definition}

\begin{definition}
Let $k\in J$ be an arc in an ideal triangulation $T$. A \emph{flip} at $k$ is the transformation of $T$ that removes $k$ and replaces it with a unique different arc that, together with the remaining arcs, forms a new ideal triangulation:
\[
\xy /l1.5pc/:
{\xypolygon4"A"{~:{(2,2):}}},
{\xypolygon4"B"{~:{(2.5,0):}~>{}}},
{\xypolygon4"C"{~:{(0.8,0.8):}~>{}}},
{"A1"\PATH~={**@{-}}'"A3"},
\endxy
\quad
\longleftrightarrow
\quad
\xy /l1.5pc/:
{\xypolygon4"A"{~:{(2,2):}}},
{\xypolygon4"B"{~:{(2.5,0):}~>{}}},
{\xypolygon4"C"{~:{(0.8,0.8):}~>{}}},
{"A2"\PATH~={**@{-}}'"A4"}
\endxy
\]
\end{definition}

The following is a well known fact about ideal triangulations:

\begin{proposition}[\cite{FST}, Proposition~3.8]
Any two ideal triangulations of $(\mathbb{D},\mathbb{M})$ are related by a sequence of flips.
\end{proposition}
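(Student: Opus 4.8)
The plan is to reduce the statement to the classical combinatorial fact that the flip graph of triangulations of a convex polygon is connected, and then prove that fact by induction on the number of marked points.

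First I would make the reduction. Since the $n+3$ points of $\mathbb{M}$ all lie on $\partial\mathbb{D}$ and the interior of $\mathbb{D}$ contains no marked points, every arc in $(\mathbb{D},\mathbb{M})$ is determined up to homotopy by its unordered pair of non-adjacent endpoints (in particular no arc is a loop, since a simple loop based at a boundary marked point bounds a disk with no marked points and so is excluded by the definition of arc). Hence an ideal triangulation of $(\mathbb{D},\mathbb{M})$ is precisely a triangulation of a convex $(n+3)$-gon $\Pi$ --- whose vertices are the points of $\mathbb{M}$ in their cyclic order along $\partial\mathbb{D}$ --- by a maximal family of pairwise non-crossing diagonals; every ideal triangle is then a genuine triangle with three distinct vertices, and the flip defined in the excerpt coincides with the ordinary diagonal flip of $\Pi$. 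A short induction (or an Euler characteristic count) shows that every such triangulation uses exactly $n$ diagonals and has $n+1$ triangles, so $|J^T|=n$ for all $T$; I would record this, as it is used implicitly elsewhere in the paper.

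Since flips are reversible, it suffices to show that every triangulation $T$ of $\Pi$ is connected by flips to one fixed triangulation $T_0$, which I take to be the fan consisting of all diagonals emanating from the vertex $p_0$. I argue by induction on $N=n+3$, the case $N=3$ being trivial. Label the vertices $p_0,p_1,\dots,p_{N-1}$ cyclically and let $e$ be the boundary edge $p_0p_1$. In $T$ the edge $e$ is a side of a unique triangle $\triangle p_0p_1p_k$; call $p_k$ its \emph{apex}. If $k\ge 3$, the diagonal $p_1p_k$ is a side of exactly one further triangle $\triangle p_1p_kp_l$ with $2\le l\le k-1$, and flipping $p_1p_k$ inside the quadrilateral $p_0p_1p_lp_k$ (whose other diagonal is $p_0p_l$) yields a triangulation whose apex is $p_l$, with $l<k$. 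Iterating, the apex strictly decreases until it equals $p_2$, at which point $\triangle p_0p_1p_2$ is an ``ear'' of the triangulation and (since $N\ge 4$) $p_0p_2$ is one of its diagonals. Deleting the ear, the remaining diagonals form a triangulation $T''$ of the convex $(N-1)$-gon $p_0p_2p_3\cdots p_{N-1}$; by the inductive hypothesis $T''$ is connected, within that sub-polygon, by flips to the fan at $p_0$. Each such flip is supported in the sub-polygon, which is disjoint from the ear, so it lifts verbatim to a flip of $\Pi$; performing these flips brings $T$ to the ear $\triangle p_0p_1p_2$ together with the fan at $p_0$ of $p_0p_2\cdots p_{N-1}$, which is exactly $T_0$. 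This completes the induction, and hence the proof.

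The step needing the most care is checking that the apex-lowering move is well defined and strictly monotone: one must verify that the two triangles on the diagonal $p_1p_k$ form a quadrilateral having $p_1p_k$ as a diagonal (so the flip is legal) and that the new apex index $l$ satisfies $2\le l<k$, so the process terminates. Everything else is routine bookkeeping, and the single genuine conceptual point --- avoiding the self-folded triangles that make the analogous argument in~\cite{FST} delicate on general surfaces --- is dealt with once and for all by the initial reduction to a polygon.
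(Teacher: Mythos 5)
Your proof is correct. Note that the paper itself offers no argument for this statement: it is quoted verbatim as Proposition~3.8 of~\cite{FST}, where it is established for arbitrary marked surfaces by a considerably more involved argument that must contend with punctures, self-folded triangles, and nontrivial topology. Your reduction to the convex $(n+3)$-gon and the apex-lowering induction toward the fan at $p_0$ is the classical, self-contained proof of flip-connectivity in this special case, and all the steps check out: arcs in $(\mathbb{D},\mathbb{M})$ really are classified by unordered pairs of non-adjacent marked points, the apex index strictly decreases under each flip so the process terminates at an ear, and flips supported in the sub-polygon lift to flips of the whole polygon. So your argument is more elementary than (and logically independent of) the general result being cited, at the cost of applying only to the disk case needed here; the citation buys uniformity with the surface setting treated in the sequel paper.
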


Let $(\phi,f)\in\Quad_{fr}(\mathbb{D},\mathbb{M})$ be a framed differential. By definition, $\phi$ is a quadratic differential with simple zeros and a single pole of order $n+5$. If $\phi$ is saddle-free, then Proposition~\ref{prop:horizontalstripdecomposition} says that the separating trajectories divide $\mathbb{CP}^1$ into finitely many horizontal strips and half planes. If we choose a single generic trajectory in each of these regions and map these generic trajectories to~$\mathbb{D}$ using the framing, we get an ideal triangulation of~$(\mathbb{D},\mathbb{M})$.

\begin{definition}
The ideal triangulation of $(\mathbb{D},\mathbb{M})$ obtained by choosing a single generic trajectory in each horizontal strip and half plane of $\phi$ is called the \emph{WKB triangulation}.
\end{definition}

We will return to the WKB triangulation later when we discuss the relationship between the wall-and-chamber structure of the space of stability conditions and the algebraic tori that define the cluster Poisson variety.

\subsection{The spectral cover}

Let $\phi$ be a quadratic differential with simple zeros and poles of order $m_i$ at the points $p_i\in\mathbb{CP}^1$. We can alternatively view $\phi$ as a holomorphic section 
\[
\varphi\in H^0(\mathbb{CP}^1,\omega_{\mathbb{CP}^1}(E)^{\otimes2}), \quad E=\sum_i\left\lceil\frac{m_i}{2}\right\rceil p_i
\]
with simple zeros at both the zeros and odd order poles of~$\phi$.

\begin{definition}
The \emph{spectral cover} is defined as 
\[
\Sigma_\phi=\{(p,\psi(p)):p\in\mathbb{CP}^1,\psi(p)\in L_p,\psi(p)\otimes\psi(p)=\varphi(p)\}\subseteq L.
\]
where $L$ denotes the total space of the line bundle $\omega_{\mathbb{CP}^1}(E)$.
\end{definition}

The projection $\pi:\Sigma_\phi\rightarrow\mathbb{CP}^1$ is a double cover branched at the simple zeros and odd order poles of~$\phi$. Note that the inverse image of the horizontal foliation of $\mathbb{CP}^1\setminus\Crit(\phi)$ under the covering map $\pi$ determines a foliation of $\Sigma_\phi\setminus\pi^{-1}\Crit(\phi)$. Moreover, the following lemma shows that the leaves of this foliation have natural orientations.

\begin{lemma}
Let $\beta$ be a generic trajectory of the quadratic differential $\phi$. Then the real part of the distinguished local coordinate $w(z)$ is increasing or decreasing along~$\beta$.
\end{lemma}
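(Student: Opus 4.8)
The plan is to reduce everything to the distinguished local coordinate $w$, in which the statement is essentially a restatement of the definition of a horizontal trajectory, and then to verify the one genuinely nontrivial point — that $\Re(w)$ neither stalls nor reverses along $\beta$ — using only that a trajectory stays away from the critical points of $\phi$.

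Concretely, I would parametrize $\beta$ by an open interval $I$, say $t\mapsto z(t)$ with $\dot z(t)\neq 0$, and analytically continue the distinguished local coordinate $w$ along this path; since the domain interval is simply connected the continuation is single-valued, and the residual ambiguity $w\mapsto\pm w+\mathrm{const}$ amounts to choosing once and for all a branch of $\sqrt{\varphi}$ along $\beta$ so that $dw=\sqrt{\varphi}\,dz$. By the very definition of a horizontal trajectory, $\Im\big(w(z(t))\big)$ is constant in $t$, hence $\frac{d}{dt}w(z(t))$ is real for every $t\in I$.

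It then remains to show this derivative never vanishes. We have $\frac{d}{dt}w(z(t))=\sqrt{\varphi(z(t))}\,\dot z(t)$, and because $\beta$ is a horizontal trajectory it lies in $\mathbb{CP}^1\setminus\Crit(\phi)$, so $\varphi(z(t))$ is finite and nonzero for all $t$, while $\dot z(t)\neq 0$ by the choice of parametrization. Thus $\frac{d}{dt}\Re\big(w(z(t))\big)=\frac{d}{dt}w(z(t))$ is a continuous, real-valued, nowhere-vanishing function on the interval $I$, hence has constant sign, and therefore $\Re\big(w(z(t))\big)$ is strictly monotone along $\beta$. Whether it increases or decreases depends only on the orientation of $\beta$ and the chosen branch of $\sqrt{\varphi}$, and reversing either reverses the monotonicity; on the spectral cover $\Sigma_\phi$ the tautological square root $\psi$ of $\varphi$ eliminates the sign ambiguity, so this monotonicity singles out a canonical orientation of each lifted leaf, which is the use we make of the lemma.

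I do not anticipate a substantive obstacle: the argument is just the definition of a horizontal trajectory plus the observation that $\frac{d}{dt}w(z(t))\neq 0$. The only thing needing a little care is the bookkeeping for the $w\mapsto\pm w+\mathrm{const}$ ambiguity. If one prefers a coordinate-free phrasing, one can instead note that the real $1$-form $d(\Re w)=\Re(\sqrt{\varphi}\,dz)$ restricts to a nowhere-vanishing form on $\beta$ — the identical computation — so $\Re w$ is a local diffeomorphism from $\beta$ onto an interval and hence globally monotone.
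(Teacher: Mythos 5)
Your proposal is correct and follows essentially the same route as the paper's proof: both reduce to the observation that $\frac{d}{dt}w(z(t))$ is real because $\Im w$ is constant along a horizontal trajectory, and nonzero because a generic trajectory avoids the zeros of $\phi$ (the paper phrases this as a contradiction via $\phi=(w')^2dz^{\otimes 2}$ having a zero, you phrase it directly via $w'=\sqrt{\varphi}\neq 0$). Your explicit handling of branch continuation and the constant-sign argument on the interval is slightly more careful than the paper's, but the substance is identical.
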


\begin{proof}
Suppose the arc $\beta$ is given by a map $\gamma:(0,1)\rightarrow\mathbb{CP}^1$ with $\gamma'(t)\neq0$ for all $t\in(0,1)$. By definition of a trajectory, we know that the imaginary part $\Im w$ is constant along~$\beta$, that is, $\frac{d}{dt}\Im w(\gamma(t))=0$ for all $t\in(0,1)$. If we also have $\frac{d}{dt}\Re w(\gamma(t))=0$ at some~$t=t_0$, then 
\[
w'(\gamma(t_0))\gamma'(t_0)=\frac{d}{dt}\Re w(\gamma(t))\big\rvert_{t=t_0}+i\frac{d}{dz}\Im w(\gamma(t))\big\rvert_{t=t_0}=0,
\]
so $w'(\gamma(t_0))=0$. But then $\phi(z)=(w'(z))^2dz^{\otimes2}$ has a zero at~$z=\gamma(t_0)$, contradicting the fact that $\beta$ is a generic trajectory. Hence $\frac{d}{dt}\Re w(\gamma(t))\neq0$ for all $t\in(0,1)$, and $\Re w$ is increasing or decreasing along~$\beta$.
\end{proof}

\section{Configurations of points in~$\mathbb{CP}^1$}
\label{sec:ConfigurationsOfPointsInCP1}

\subsection{The moduli space and its cluster structure}

In this section, we will discuss a moduli space parametrizing configurations of points in~$\mathbb{CP}^1$. This moduli space is a special case of Fock and Goncharov's moduli space of framed $PGL_2(\mathbb{C})$-local systems on a surface~\cite{FG1}. As before, we will write $(\mathbb{D},\mathbb{M})$ for a disk with finitely many marked points on its boundary.

\begin{definition}
We will write $\mathcal{X}(\mathbb{D},\mathbb{M})$ for the space of maps $\psi:\mathbb{M}\rightarrow\mathbb{CP}^1$ modulo the action of $PGL_2(\mathbb{C})$ by left multiplication.
\end{definition}

As we recall below, this space $\mathcal{X}(\mathbb{D},\mathbb{M})$ admits an atlas of rational cluster coordinates. To define them, we will first identify the Zariski open subsets on which these coordinates are regular.

\Needspace*{2\baselineskip}
\begin{definition}
Let $\psi$ be a point of $\mathcal{X}(\mathbb{D},\mathbb{M})$.
\begin{enumerate}
\item If $i$ is an arc or boundary segment in~$\mathbb{D}$ and $\psi$ assigns distinct points of~$\mathbb{CP}^1$ to the endpoints of~$i$, then we say that $\psi$ is \emph{generic} with respect to~$i$.

\item If $T$ is an ideal triangulation of~$(\mathbb{D},\mathbb{M})$ and $\psi$ is generic with respect to all edges of~$T$, then we say that $\psi$ is \emph{generic} with respect to~$T$.

\item We say that $\psi$ is \emph{generic} if it is generic with respect to some ideal triangulation of~$(\mathbb{D},\mathbb{M})$.
\end{enumerate}
We will denote the set of generic points of $\mathcal{X}(\mathbb{D},\mathbb{M})$ by $\mathcal{X}^*(\mathbb{D},\mathbb{M})$.
\end{definition}

The set $\mathcal{X}^*(\mathbb{D},\mathbb{M})$ is a Zariski open subset of the stack $\mathcal{X}(\mathbb{D},\mathbb{M})$. It admits a natural atlas of cluster coordinates given by cross ratios. To define these coordinates, let us fix an ideal triangulation~$T$ of~$(\mathbb{D},\mathbb{M})$ and a point $\psi\in\mathcal{X}(\mathbb{D},\mathbb{M})$ which is generic with respect to~$T$. If $j$ is any arc in~$T$, then there are two triangles of $T$ that share this arc~$j$. Together they form a quadrilateral, and we can denote the vertices of this quadrilateral by $p_1,\dots,p_4$ in the counterclockwise direction so that the edge $j$ connects the vertices labeled~$p_1$ and~$p_3$.
\[
\xy /l1.5pc/:
{\xypolygon4"A"{~:{(2,2):}}},
{\xypolygon4"B"{~:{(2.5,0):}~>{}}},
{\xypolygon4"C"{~:{(0.8,0.8):}~>{}}},
{"A1"\PATH~={**@{-}}'"A3"},
(0.5,0)*{j};
(1,-3.25)*{p_1};
(4.25,0)*{p_2};
(1,3.25)*{p_3};
(-2.25,0)*{p_4};
\endxy
\]
For $i=1,\dots,4$, we set $z_i=\psi(p_i)$ and define $X_j$ as the cross ratio 
\[
X_j=\frac{(z_1-z_2)(z_3-z_4)}{(z_2-z_3)(z_1-z_4)}.
\]
Since we assume $\psi$ is generic with respect to~$T$, this cross ratio is a nonzero complex number. Note that there are two ways of numbering the vertices of the quadrilateral, and they give the same value for the cross ratio.

\begin{proposition}
For any ideal triangulation $T$, the $X_j$ provide an isomorphism from the set of points of $\mathcal{X}(\mathbb{D},\mathbb{M})$ generic with respect to~$T$ to the algebraic torus~$(\mathbb{C}^*)^n$.
\end{proposition}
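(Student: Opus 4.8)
The plan is to show that the map $\psi \mapsto (X_j)_{j \in J^T}$ is a well-defined morphism to $(\mathbb{C}^*)^n$ and to construct an explicit inverse. First I would check that the target has the right dimension: an ideal triangulation of a disk with $n+3$ marked points has exactly $n$ internal arcs (this follows from an Euler characteristic count, or inductively from the fact that a flip preserves the number of arcs and the triangulation of a convex $(n+3)$-gon by $n$ diagonals is the model case), so the set of arcs $J^T$ has cardinality $n$ and each coordinate $X_j$ lands in $\mathbb{C}^*$ by genericity. This shows the map goes into $(\mathbb{C}^*)^n$. Invariance under $PGL_2(\mathbb{C})$ is automatic because each $X_j$ is a cross ratio, and cross ratios are projective invariants; one also checks the two admissible labelings of the quadrilateral give the same value, which is the standard symmetry $(z_1,z_2,z_3,z_4) \mapsto (z_3,z_4,z_1,z_2)$ of the cross ratio. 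Hence the map is a well-defined morphism of varieties.

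For surjectivity and injectivity I would build the inverse by reconstructing the configuration from the coordinates. Fix a triangle of $T$ and normalize the three points it is assigned to be $0$, $1$, $\infty$ (possible up to the $PGL_2(\mathbb{C})$-action, using that $\psi$ is generic so these are distinct); this pins down the representative uniquely. Then I would propagate across the triangulation: each time we cross an internal arc $j$ into a new triangle, the new vertex $p$ is the fourth point of a quadrilateral whose other three vertices have already been assigned, and the cross ratio equation defining $X_j$ is linear in the unknown coordinate of $p$ with nonzero leading coefficient (again by genericity of the already-placed vertices), so it has a unique solution, and that solution is automatically distinct from the relevant neighbor because $X_j \neq 0$. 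Since the dual graph of an ideal triangulation of a disk is a tree, this procedure visits every vertex exactly once with no consistency conditions to verify, producing a unique $\psi$ generic with respect to $T$ realizing any prescribed tuple $(X_j) \in (\mathbb{C}^*)^n$. This gives a two-sided inverse, and since both the map and its inverse are evidently rational (in fact regular on the relevant loci), we get an isomorphism of varieties onto $(\mathbb{C}^*)^n$.

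The main obstacle, and the point deserving the most care, is the bookkeeping in the propagation step: one must verify that at each stage the three previously-placed vertices of the current quadrilateral really are in "general enough" position for the linear cross-ratio equation to determine the fourth, and that the value obtained is consistent with the genericity requirement along every arc (not just the one being crossed). The clean way to handle this is to induct on the number of triangles, using that removing a leaf triangle of the dual tree removes exactly one arc, so the inductive hypothesis applies to the smaller triangulated polygon; genericity with respect to the removed arc is exactly the statement $X_j \neq 0$ for that arc, and genericity with respect to boundary segments of the small polygon that were internal arcs of the big one is inherited. Everything else — the $PGL_2$-invariance, the cross-ratio symmetry, the linearity in the fourth coordinate — is routine.
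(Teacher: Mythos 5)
Your proposal is correct and follows essentially the same route as the paper: the paper also constructs the inverse by assigning three arbitrary distinct points to the vertices of an initial triangle and then propagating across shared arcs, solving the cross-ratio equation for each new vertex in turn. Your version simply fills in the details the paper leaves implicit (the count $|J^T|=n$, the tree structure of the dual graph ensuring no consistency conditions, and the check that $X_j\in\mathbb{C}^*$ forces the new vertex to differ from both endpoints of the edges of the new triangle), all of which are accurate.
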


\begin{proof}
Suppose we are given a nonzero complex number $X_j$ for each arc $j$ of the triangulation~$T$. Let $t_0$ be any triangle of~$T$, and let us assign three arbitrary distinct points of~$\mathbb{CP}^1$ to the vertices of~$t_0$. Next, consider another triangle $t$ in the triangulation which shares an edge $j$ with~$t_0$. We have assigned points of $\mathbb{CP}^1$ to two of its vertices. We can assign a point of~$\mathbb{CP}^1$ to the remaining vertex in such a way the resulting cross ratio is the number~$X_j$. Continuing in this way, we assign a point of $\mathbb{CP}^1$ to each element of~$\mathbb{M}$. It is easy to see that this construction defines a regular map from~$(\mathbb{C}^*)^n$ to the set of points in $\mathcal{X}(\mathbb{D},\mathbb{M})$ which are generic with respect to~$T$. It is a two-sided inverse of the map sending such a point to its coordinates.
\end{proof}

If $T'$ is the ideal triangulation obtained from~$T$ by performing a flip at some arc~$k\in J$, then the set of arcs $J=J^T$ is naturally in bijection with the set of arcs $J'=J^{T'}$, and we can use the construction described above to associate a number $X_j'\in\mathbb{C}^*$ to each arc $j\in J'=J$. The following proposition relates these to the numbers $X_j$ computed using the ideal triangulation~$T$.

\begin{proposition}[\cite{FG1}, Section~10]
\label{prop:transformcoordinates}
The numbers $X_j'$ are given in terms of the numbers $X_j$~($j\in J$) by the following graphical rule:
\[
\xy /l1.5pc/:
{\xypolygon4"A"{~:{(2,2):}}},
{\xypolygon4"B"{~:{(2.5,0):}~>{}}},
{\xypolygon4"C"{~:{(0.8,0.8):}~>{}}},
{"A1"\PATH~={**@{-}}'"A3"},
(.5,0)*{X_0},
(-1,2)*{X_3},
(3,2)*{X_4},
(-1,-2)*{X_2},
(3,-2)*{X_1}
\endxy
\quad
\longleftrightarrow
\quad
\xy /l1.5pc/:
{\xypolygon4"A"{~:{(2,2):}}},
{\xypolygon4"B"{~:{(2.5,0):}~>{}}},
{\xypolygon4"C"{~:{(0.8,0.8):}~>{}}},
{"A2"\PATH~={**@{-}}'"A4"},
(1,-0.5)*{X_0^{-1}},
(-2,2)*{X_3(1+X_0)},
(4.5,2)*{X_4{(1+X_0^{-1})}^{-1}},
(-2.5,-2)*{X_2{(1+X_0^{-1})}^{-1}},
(4,-2)*{X_1(1+X_0)}
\endxy
\]
\end{proposition}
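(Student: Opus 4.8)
The claim is a purely combinatorial/algebraic identity: given the six nonzero cross ratios $X_j$ associated to the triangulation $T$, and the six cross ratios $X_j'$ associated to the flipped triangulation $T'$, one must verify the five displayed formulas. The plan is to reduce everything to an explicit computation in $\mathbb{CP}^1$ using the description of $\mathcal{X}(\mathbb{D},\mathbb{M})$ as configurations of points modulo $PGL_2(\mathbb{C})$, and the freedom to normalize.

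First I would localize the problem: a flip at $k$ changes only the triangulation in the quadrilateral formed by the two triangles adjacent to $k$, so all arcs not on the boundary of this quadrilateral are shared by $T$ and $T'$ and have manifestly equal cross ratios. Thus it suffices to treat the case where $(\mathbb{D},\mathbb{M})$ is a single quadrilateral with $5$ marked points (the four vertices plus, for each of the four sides that is an interior arc, we may pretend the side is itself a flippable arc by glancing at one extra triangle). More efficiently, I would observe that each of the four "outer" coordinates $X_1,\dots,X_4$ is a cross ratio of four points among the six vertices of a \emph{pentagon} (the quadrilateral plus one adjacent triangle), while $X_0$ is the cross ratio of the four vertices of the central quadrilateral; so I would set up coordinates on a configuration of at most six points in $\mathbb{CP}^1$. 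Using the $PGL_2(\mathbb{C})$-action I can send three chosen points to $0,1,\infty$, which makes every cross ratio a rational function of the two or three remaining coordinates.

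The key computational step: normalize the four vertices $p_1,p_2,p_3,p_4$ of the central quadrilateral so that, say, $z_1 = 0$, $z_3 = \infty$, $z_2 = 1$, and then $z_4$ is determined by $X_0 = \frac{(z_1 - z_2)(z_3 - z_4)}{(z_2 - z_3)(z_1 - z_4)}$; solving gives $z_4$ explicitly in terms of $X_0$. After the flip, the new arc $k'$ connects $p_2$ and $p_4$, so $X_0' = $ the cross ratio of $p_2,p_3,p_4,p_1$ (taken in counterclockwise order around the flipped quadrilateral, with $p_2,p_4$ as the diagonal), and a direct substitution yields $X_0' = X_0^{-1}$. For each outer edge, say the one carrying $X_1$, I would introduce the extra vertex $q$ of the triangle glued along that edge, write $X_1$ as the appropriate cross ratio involving $q$ and three of the $z_i$, and then recompute the cross ratio $X_1'$ that the flipped triangulation assigns to the same arc — the only thing that changes is which quadrilateral that arc bounds, hence which four points enter the cross ratio. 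Expanding and using the relation defining $z_4$ in terms of $X_0$ should collapse $X_1'$ to $X_1(1+X_0)$, and similarly for the other three outer edges. I would double-check orientation conventions (counterclockwise labeling, which endpoints the diagonal joins) against the figure, since a sign or an inversion error there is the easiest way to get $(1+X_0^{-1})$ where $(1+X_0)$ belongs.

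The main obstacle I anticipate is bookkeeping rather than mathematics: keeping the cyclic orderings of the various quadrilaterals consistent before and after the flip, and making sure the "two ways of numbering the vertices give the same cross ratio" symmetry is used correctly so that each $X_j'$ is compared to the right $X_j$. Once the normalization $z_1,z_2,z_3 = 0,1,\infty$ is fixed, everything is a one-line rational-function identity, so I would simply present the normalization, the resulting value of $z_4$, and then tabulate the five cross ratios $X_j'$, checking each against the claimed formula; alternatively, since this is exactly Section~10 of~\cite{FG1}, I would note that the computation is carried out there and only indicate the normalization that makes it transparent.
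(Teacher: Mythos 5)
Your proposal is correct, and in fact it supplies a proof where the paper gives none: Proposition~\ref{prop:transformcoordinates} is simply cited from~\cite{FG1} without argument. Your normalization works exactly as you predict: with $z_1=0$, $z_2=1$, $z_3=\infty$ one finds $z_4=-1/X_0$, the relabelled cross ratio of $(p_2,p_3,p_4,p_1)$ gives $X_0'=X_0^{-1}$, and for an outer edge such as $p_1p_2$ with external vertex $q$ mapped to $u$ one gets $X_1=u/(1-u)$ before the flip and $X_1'=u(1+X_0)/(1-u)=X_1(1+X_0)$ after, with the edges adjacent to $p_3$ (resp.\ $p_1$) picking up $(1+X_0^{-1})^{-1}$ by the same substitution. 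The one convention you must fix carefully is the one you already flagged: each outer quadrilateral is labelled counterclockwise starting so that the flipped-in or flipped-out internal vertex sits in position $4$, and whether the factor is $(1+X_0)$ or $(1+X_0^{-1})^{-1}$ is determined by whether the internal vertex replacing $p_3$ by $p_4$ (or vice versa) enters the numerator or denominator of the cross ratio; your computation handles this correctly. A minor point worth a sentence in a written-up version: the formulas presuppose that $\psi$ is generic for both $T$ and $T'$, i.e.\ $X_0\neq -1$, which is consistent with the maps being birational rather than biregular.
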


The transformation depicted in these diagrams is an example of a cluster transformation. In Section~\ref{sec:StabilityConditionsAndTheClusterVariety}, we will use the same rule to give an abstract definition of the cluster Poisson variety.

\subsection{Generic configurations of points}

We conclude this section by giving an alternative characterization of the set $\mathcal{X}^*(\mathbb{D},\mathbb{M})$ of generic points in~$\mathcal{X}(\mathbb{D},\mathbb{M})$. This will allow us to prove in Section~\ref{sec:TheMainConstruction} that the set of generic points parametrizes the monodromy data of certain differential equations.

\begin{proposition}
\label{prop:characterizegeneric}
Denote the points of $\mathbb{M}$ by $p_1,\dots,p_{n+3}$ so that the order of the indices is compatible with the orientation of~$\mathbb{D}$. Then the set $\mathcal{X}^*(\mathbb{D},\mathbb{M})$ consists of points $\psi$ in $\mathcal{X}(\mathbb{D},\mathbb{M})$ satisfying the following two conditions:
\begin{enumerate}
\item We have $\psi(p_i)\neq\psi(p_{i+1})$ for all $i$ where we consider the indices modulo $n+3$.
\item The set $\{\psi(p_1),\dots,\psi(p_{n+3})\}$ contains at least three distinct points.
\end{enumerate}
\end{proposition}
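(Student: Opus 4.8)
The plan is to prove the two inclusions separately. The forward direction (generic $\implies$ conditions (1) and (2)) is the easier one: if $\psi$ is generic with respect to some ideal triangulation $T$, then every arc of $T$ has endpoints sent to distinct points, and in the disk $(\mathbb{D},\mathbb{M})$ with $n+3$ marked points, consecutive marked points $p_i,p_{i+1}$ are joined by a boundary segment, not an arc — so I need to argue condition (1) holds for boundary segments too. For this I would observe that in any ideal triangulation of the disk, each boundary segment $p_ip_{i+1}$ is a side of a unique ideal triangle, whose third vertex $p_j$ is connected to both $p_i$ and $p_{i+1}$ by arcs of $T$; genericity with respect to those two arcs forces $\psi(p_i)\neq\psi(p_j)$ and $\psi(p_{i+1})\neq\psi(p_j)$, and then the fact that the cross ratio $X_j$ attached to either of those arcs is a finite nonzero number forces $\psi(p_i)\neq\psi(p_{i+1})$ as well. (Alternatively, and more cleanly, take any triangle and chase: genericity says no arc degenerates, and a short case check shows a degeneration $\psi(p_i)=\psi(p_{i+1})$ would force some arc's cross ratio to be $0$, $1$, or $\infty$.) Condition (2) is immediate: a triangulation has at least one triangle, whose three vertices receive three distinct points.

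For the reverse direction (conditions (1) and (2) $\implies$ generic) I would argue constructively: given $\psi$ satisfying (1) and (2), I will exhibit an ideal triangulation $T$ with respect to which $\psi$ is generic. Reading the marked points cyclically as $p_1,\dots,p_{n+3}$, consider the values $\psi(p_1),\dots,\psi(p_{n+3})\in\mathbb{CP}^1$. By (1) no two cyclically adjacent values coincide, and by (2) at least three distinct values occur. The idea is to build a "fan-like" triangulation adapted to the coincidence pattern of the $\psi(p_i)$. Concretely, group the indices into maximal cyclic runs on which $\psi$ is constant; by (1) each run has length $1$ (a run of length $\geq 2$ would contain adjacent equal values), so in fact all that (1) gives us directly is adjacency-distinctness, while (2) guarantees the values are not all concentrated on two points. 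I would then pick an index $m$ such that $\psi(p_m)$ differs from both of its "triangulation partners" in a suitable fan, and triangulate by drawing arcs from well-chosen vertices; the key point to verify is that one can always route the arcs so that every arc joins two marked points carrying \emph{distinct} $\psi$-values. Using (2) to find two indices $a<b$ with $\psi(p_a),\psi(p_b)$ and a third value all distinct, split the polygon along the arc $p_ap_b$ into two sub-polygons and triangulate each as a fan from $p_a$ and from $p_b$ respectively; condition (1) handles the boundary arcs of each fan (the "short" diagonals $p_ip_{i+2}$ need $\psi(p_i)\neq\psi(p_{i+2})$, which is \emph{not} guaranteed by (1) alone) — so this naive fan may fail, and the construction must be more careful.

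The main obstacle, therefore, is exactly this: conditions (1) and (2) do not prevent $\psi(p_i)=\psi(p_{i+2})$, so a chosen triangulation can have an arc whose endpoints collide. The resolution is that we get to \emph{choose} the triangulation, and we have a lot of freedom. I would handle it as follows: since at least three distinct values occur, pick three indices $i<j<k$ (cyclically) with $\psi(p_i),\psi(p_j),\psi(p_k)$ pairwise distinct, and use the "central triangle" $p_ip_jp_k$ as a seed. This cuts the polygon into three arcs (boundary chains), and within each chain the problem reduces to triangulating a sub-polygon whose two "apex" vertices have distinct $\psi$-values; now induct. The inductive claim is: a sub-polygon on marked points $q_0,q_1,\dots,q_r$ (in order), with $\psi(q_0)\neq\psi(q_r)$ and with $\psi(q_s)\neq\psi(q_{s+1})$ for all interior adjacencies, admits a triangulation all of whose internal arcs join vertices with distinct $\psi$-values. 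To prove the inductive step, if $r\le 2$ there is nothing to do; otherwise find an index $s$ with $0<s<r$ such that $\psi(q_s)\notin\{\psi(q_0),\psi(q_r)\}$ or else such that cutting at $q_0q_s$ and $q_sq_r$ keeps both apex-conditions — one can always do this because, walking from $q_0$ to $q_r$, the value changes at least once and there is an adjacent pair where a fresh value appears. Add the two arcs $q_0q_s$ and $q_sq_r$ and recurse. Verifying that such an $s$ always exists, using precisely conditions (1) and (2), is the crux; once that combinatorial lemma is in hand, the resulting $T$ is an ideal triangulation with respect to which $\psi$ is generic by construction, and genericity of $\psi$ with respect to \emph{some} triangulation is exactly membership in $\mathcal{X}^*(\mathbb{D},\mathbb{M})$.
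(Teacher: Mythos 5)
Your forward direction is fine; note that the paper's convention is that ``generic with respect to $T$'' already quantifies over boundary segments as well as arcs, so condition~(1) follows immediately and the detour through the third vertex and the cross ratio is unnecessary (though not wrong). The real problem is in the reverse direction, where you correctly isolate the crux but leave it unresolved---and the inductive claim you propose to reduce to is in fact false. Consider a sub-polygon $q_0,q_1,q_2,q_3$ with $\psi$-values $a,b,a,b$ ($a\neq b$): all adjacent pairs are distinct and $\psi(q_0)\neq\psi(q_3)$, yet both diagonals $q_0q_2$ and $q_1q_3$ join vertices carrying equal values, so this piece admits \emph{no} admissible triangulation. Such a piece genuinely arises from your construction: for the hexagon with values $a,b,a,b,c,b$ at $p_1,\dots,p_6$, conditions (1) and (2) hold, the seed triangle $p_1p_4p_5$ has pairwise distinct values, and the chain it cuts off between $p_1$ and $p_4$ is exactly the bad quadrilateral. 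So ``pick any three pairwise distinct values and recurse on the three pieces'' does not close, and the sentence ``one can always do this because \dots\ a fresh value appears'' is precisely where an argument is still missing. (A correct triangulation does exist for this hexagon---the fan from $p_5$---but only for a different choice of seed, which your procedure does not guarantee.)

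The paper's induction avoids this by shrinking the polygon one vertex at a time rather than splitting it into three chains. For $|\mathbb{M}|>3$ one finds $k$ with $\psi(p_{k-1})\neq\psi(p_{k+1})$ (such $k$ exists, since otherwise $\psi$ would be constant on residue classes modulo $2$, violating (1) when $n+3$ is odd and (2) when it is even), cuts off the ear $p_{k-1}p_kp_{k+1}$, and recurses on $\mathbb{M}\setminus\{p_k\}$. The recursion is legitimate unless the remaining values collapse to exactly two points, and in that exceptional case $\psi(p_k)$ is forced to differ from every other value, so the fan triangulation from $p_k$ finishes the proof. If you want to salvage your central-triangle scheme you would need a one-vertex-at-a-time lemma of this kind anyway, so I would restructure the reverse direction along those lines.
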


\begin{proof}
If $\psi$ is a generic point of $\mathcal{X}(\mathbb{D},\mathbb{M})$, then it is generic with respect to some ideal triangulation~$T$. Therefore the points of~$\mathbb{CP}^1$ that $\psi$ associates to the ends of any edge $i$ of~$T$ are distinct. Thus $\psi$ satisfies the first condition. Since $\psi$ assigns three distinct points of~$\mathbb{CP}^1$ to the vertices of any triangle, $\psi$ also satisfies the second condition.

Conversely, suppose $\psi$ is a point of $\mathcal{X}(\mathbb{D},\mathbb{M})$ which satisfies the conditions in the proposition. We need to show that there exists an ideal triangulation $T$ of $(\mathbb{D},\mathbb{M})$ such that $\psi$ is generic with respect to~$T$. If $|\mathbb{M}|=3$, this follows immediately from our assumptions on~$\psi$. If $|\mathbb{M}|>3$, then we can find some $k$ such that $\psi(p_{k-1})\neq\psi(p_{k+1})$ where the indices are considered modulo $n+3$. Consider the set 
\[
\mathcal{Z}=\{\psi(p_i):i\neq k\}.
\]
If this set contains at least three distinct points, draw an arc connecting $p_{k-1}$ and $p_{k+1}$. By induction on~$|\mathbb{M}|$, this can be extended to an ideal triangulation $T$ such that $\psi$ is generic with respect to~$T$. On the other hand, if $\mathcal{Z}$ contains only two distinct points, then these points are distinct from $\psi(p_k)$. Therefore, if $T$ is the ideal triangulation containing an arc from $p_k$ to each point $\mathbb{M}\setminus\{p_k\}$, then $\psi$ is generic with respect to~$T$. This completes the proof.
\end{proof}

\section{The main construction}
\label{sec:TheMainConstruction}

\subsection{From quadratic differentials to monodromy data}

In Section~\ref{sec:QuadraticDifferentials}, we defined the moduli space $\Quad_{fr}(\mathbb{D},\mathbb{M})$ of framed quadratic differentials and proved that it is identified with a space of polynomials of the form 
\[
P(z)=z^{n+1}+a_{n-1}z^{n-1}+\dots+a_1z+a_0
\]
for $a_0,\dots,a_{n-1}\in\mathbb{C}$. We will now consider the associated differential equation 
\begin{align}
\label{eqn:schrodinger}
y''(z)-P(z)y(z)=0.
\end{align}
This differential equation was studied by Sibuya~\cite{Sibuya}, who worked out the detailed asymptotic behavior of solutions as $z\rightarrow\infty$. Because this equation has an irregular singularity, this asymptotic behavior depends on the direction in which we approach~$\infty$.

\begin{definition}
For $k\in\mathbb{Z}/(n+3)\mathbb{Z}$, the $k$th \emph{Stokes sector} is defined as the set 
\[
\mathscr{S}_k=\left\{z\in\mathbb{C}:\left|\arg z-\frac{2\pi}{n+3}k\right|<\frac{\pi}{n+3}\right\}.
\]
A solution $y(z)$ of~\eqref{eqn:schrodinger} is said to be \emph{subdominant} in~$\mathscr{S}_k$ if $y(z)\rightarrow0$ as $z\rightarrow\infty$ along any ray in~$\mathscr{S}_k$.
\end{definition}

The Stokes sectors defined in this way are non-overlapping regions whose closures cover the complex plane entirely. As shown in Chapter~2 of~\cite{Sibuya}, there is a collection of solutions $Y_k(z)=Y_k(z,a_1,\dots,a_n)$ of~\eqref{eqn:schrodinger} such that $Y_k(z)$ is subdominant in the $k$th Stokes sector.

In Chapter~8 of~\cite{Sibuya}, Sibuya discusses a general boundary value problem related to~\eqref{eqn:schrodinger}. The problem is formulated in terms of certain limits in the Stokes sectors, which we now define.

\begin{definition}
Let $y_1(z)$ and $y_2(z)$ be linearly independent solutions of~\eqref{eqn:schrodinger}. For any $k\in\mathbb{Z}/(n+3)\mathbb{Z}$, we call 
\[
w_k(y_1,y_2)=\lim_{\substack{z\rightarrow\infty \\ z\in\mathscr{S}_k}}\frac{y_1(z)}{y_2(z)}
\]
the $k$th \emph{asymptotic value}. Here the limit is taken along any ray that lies in the $k$th Stokes sector.
\end{definition}

By expressing the above limit in terms of subdominant solutions of~\eqref{eqn:schrodinger}, one shows that this limit exists in~$\mathbb{CP}^1$ and is independent of the direction in~$\mathscr{S}_k$ in which the limit is taken (see~\cite{Sibuya}, Chapter~8). Of course the asymptotic values do depend on the choice of~$y_1$ and~$y_2$, but the following proposition shows that the collection of asymptotic values is well defined up to the action of~$PGL_2(\mathbb{C})$.

\begin{proposition}
\label{prop:PGL2action}
If $\tilde{y}_1$ and $\tilde{y}_2$ are linearly independent solutions of~\eqref{eqn:schrodinger}, then there exists a transformation $g\in PGL_2(\mathbb{C})$ so that 
\[
w_k(\tilde{y}_1,\tilde{y}_2)=g\cdot w_k(y_1,y_2)
\]
for all $k\in\mathbb{Z}/(n+3)\mathbb{Z}$.
\end{proposition}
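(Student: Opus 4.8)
The plan is to express the asymptotic values $w_k(\tilde y_1, \tilde y_2)$ and $w_k(y_1,y_2)$ in terms of the distinguished subdominant solutions $Y_k(z)$, and then observe that changing the basis of solutions amounts to acting by a single matrix in $GL_2(\mathbb{C})$, which descends to the required element of $PGL_2(\mathbb{C})$. Concretely, fix the basis $y_1, y_2$. Since the space of solutions of~\eqref{eqn:schrodinger} is two-dimensional, there is an invertible matrix $A = \begin{pmatrix} a & b \\ c & d \end{pmatrix} \in GL_2(\mathbb{C})$ such that
\[
\tilde y_1 = a\, y_1 + b\, y_2, \qquad \tilde y_2 = c\, y_1 + d\, y_2.
\]
Then for any $k$, wherever the limit defining $w_k$ is taken along a ray in $\mathscr{S}_k$,
\[
\frac{\tilde y_1(z)}{\tilde y_2(z)} = \frac{a\, y_1(z) + b\, y_2(z)}{c\, y_1(z) + d\, y_2(z)} = \frac{a\,\frac{y_1(z)}{y_2(z)} + b}{c\,\frac{y_1(z)}{y_2(z)} + d},
\]
valid as an identity in $\mathbb{CP}^1$ (interpreting the Möbius transformation at $\infty$ in the usual way, and noting that $y_2$ is not identically zero). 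Taking the limit $z \to \infty$ in $\mathscr{S}_k$ and using continuity of the Möbius transformation $g = [A] \in PGL_2(\mathbb{C})$ on $\mathbb{CP}^1$, we get $w_k(\tilde y_1, \tilde y_2) = g \cdot w_k(y_1, y_2)$, and crucially $g$ does not depend on $k$.

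The one point that needs a little care is whether the passage to the limit is legitimate in all cases — for instance, when the limit of $y_1/y_2$ is $\infty$, or when the denominator $c\,\tfrac{y_1(z)}{y_2(z)} + d$ vanishes along the ray. Both are handled by working in $\mathbb{CP}^1$ rather than $\mathbb{C}$: the map $[X:Y] \mapsto [aX+bY : cX+dY]$ is a globally defined automorphism of $\mathbb{CP}^1$, and it is a homeomorphism, so it commutes with the limit regardless of where the limiting value lies. To make this rigorous one writes $y_1/y_2$ as the image under the quotient map $\mathbb{C}^2 \setminus \{0\} \to \mathbb{CP}^1$ of the curve $z \mapsto (y_1(z), y_2(z))$; the fact that the limit $w_k$ exists in $\mathbb{CP}^1$ (cited from~\cite{Sibuya}, Chapter~8) means exactly that this projectivized curve has a limit in $\mathbb{CP}^1$ along rays in $\mathscr{S}_k$, and applying the continuous map $g$ to both sides yields the claim.

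I expect the main (and only) obstacle to be purely bookkeeping: ensuring the Möbius-transformation identity is stated at the level of $\mathbb{CP}^1$ so that the various degenerate cases (limit equal to $\infty$, solutions proportional in some sector, etc.) are subsumed automatically, rather than treated by hand. Once that framing is in place, the proof is immediate from the linearity of the solution space and the continuity of $PGL_2(\mathbb{C})$ acting on $\mathbb{CP}^1$. No appeal to the finer asymptotic analysis of Sibuya is needed beyond the already-cited existence of the limits $w_k$.
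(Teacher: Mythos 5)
Your proposal is correct and is essentially identical to the paper's proof: write $\tilde y_1,\tilde y_2$ in terms of the basis $y_1,y_2$ via a matrix in $GL_2(\mathbb{C})$ and pass the limit through the resulting M\"obius transformation. Your extra care about the degenerate cases (limit equal to $\infty$, vanishing denominator) via the projective formulation is a minor refinement the paper leaves implicit.
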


\begin{proof}
If $\tilde{y}_1$ and $\tilde{y}_2$ are linearly independent solutions of~\eqref{eqn:schrodinger} then we can write $\tilde{y}_1=ay_1+by_2$ and $\tilde{y}_2=cy_1+dy_2$ for some $\left(\begin{array}{cc} a & b \\ c & d \end{array}\right)\in GL_2(\mathbb{C})$ since the solutions $y_1$ and~$y_2$ form a basis. Then 
\[
w_k(\tilde{y}_1,\tilde{y}_2) = \lim_{\substack{z\rightarrow\infty \\ z\in\mathscr{S}_k}}\frac{ay_1(z)+by_2(z)}{cy_1(z)+dy_2(z)} = \frac{aw_k(y_1,y_2)+b}{cw_k(y_1,y_2)+d}
\]
as desired.
\end{proof}

Consider the framed quadratic differential $(\phi,f_0)$ corresponding to the polynomial $P(z)$. The quadratic differential $\phi$ is given by $\phi(z)=P(z)dz^{\otimes2}$. The $n+3$ distinguished tangent directions at~$\infty$ are the tangent vectors to the rays
\[
\mathbb{R}_{>0}\exp\left(2\pi i\frac{k}{n+3}\right), 
\]
which lie inside the Stokes sectors, and the framing $f_0$ provides a bijection between these distinguished tangent directions and the points of~$\mathbb{M}$. Thus we can define a map $\psi:\mathbb{M}\rightarrow\mathbb{CP}^1$ which assigns to each point of~$\mathbb{M}$ the asymptotic value in the corresponding Stokes sector. By~Proposition~\ref{prop:PGL2action}, this construction provides a well defined map 
\[
\mathbb{C}^n\rightarrow\mathcal{X}(\mathbb{D},\mathbb{M})
\]
sending the vector $(a_0,a_1,\dots,a_{n-1})$ to the point~$\psi$. Moreover, we have the following fact.

\begin{proposition}[\cite{Sibuya}, Lemma~39.1 and Lemma~39.2]
Let $w_k=w_k(y_1,y_2)$ be the asymptotic values associated with linearly independent solutions $y_1(z)$ and $y_2(z)$ of~\eqref{eqn:schrodinger}. Then 
\begin{enumerate}
\item We have $w_k\neq w_{k+1}$ for all $k\in\mathbb{Z}/(n+3)\mathbb{Z}$.
\item The set $\{w_1,\dots,w_{n+3}\}$ contains at least three distinct elements.
\end{enumerate}
\end{proposition}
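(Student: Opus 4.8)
The plan is to reduce both statements to assertions about linear independence among Sibuya's subdominant solutions $Y_k$, and then to prove those assertions by combining the asymptotic expansions of the $Y_k$ with the combinatorics of the cyclic index set $\mathbb{Z}/(n+3)\mathbb{Z}$. Using Proposition~\ref{prop:PGL2action} I would fix once and for all a basis of solutions $y_1,y_2$ and write $Y_k=c_ky_1+d_ky_2$ with $(c_k,d_k)\neq(0,0)$. In the sector $\mathscr{S}_k$ the solution $Y_k$ is subdominant and, since up to scalars it is the unique subdominant solution there, every solution not proportional to $Y_k$ tends to $\infty$ in $\mathscr{S}_k$. Writing $y_1$ and $y_2$ in a basis $\{Y_k,Z\}$ with $Z\not\propto Y_k$ and letting $z\to\infty$ in $\mathscr{S}_k$ (so that $Y_k/Z\to0$), one finds that the limit defining $w_k$ exists and equals $-d_k/c_k\in\mathbb{CP}^1$. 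Consequently $w_j=w_k$ if and only if $(c_j,d_j)$ and $(c_k,d_k)$ are proportional, that is, if and only if $Y_j$ and $Y_k$ are linearly dependent. Thus part~(1) is equivalent to the statement that $Y_k$ and $Y_{k+1}$ are linearly independent for every $k$, and part~(2) is equivalent to the statement that among $Y_1,\dots,Y_{n+3}$ there are at least three pairwise linearly independent solutions.

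For the first of these I would invoke the asymptotic expansion $Y_k(z)\sim z^{-(n+1)/4}\exp(-S_k(z))$, valid uniformly as $z\to\infty$ in the interior of $\mathscr{S}_{k-1}\cup\mathscr{S}_k\cup\mathscr{S}_{k+1}$, established in Chapter~2 of~\cite{Sibuya}; here $S_k$ is the appropriate branch of $\tfrac{2}{n+3}z^{(n+3)/2}$, whose real part is positive on rays in $\mathscr{S}_k$ and negative on rays in the two neighboring sectors. Hence $Y_k$ is dominant, not subdominant, in $\mathscr{S}_{k+1}$. If $Y_k$ and $Y_{k+1}$ were linearly dependent, then $Y_k$ would be subdominant in $\mathscr{S}_{k+1}$, a contradiction. (Equivalently, one may simply cite that consecutive subdominant solutions form a fundamental system of~\eqref{eqn:schrodinger}, which is the content of Sibuya's Lemma~39.1.)

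For part~(2) I would consider the cyclic sequence of one-dimensional subspaces $[Y_0],[Y_1],\dots,[Y_{n+2}]$ in the projectivized solution space, a copy of $\mathbb{CP}^1$; by part~(1) no two consecutive terms coincide. If $n+3$ is odd, then a cyclic sequence of odd length in which consecutive terms differ cannot take only two values — this is the impossibility of properly two-coloring an odd cycle — so at least three of the $[Y_k]$ are distinct, and we are done. If $n+3$ is even, suppose for contradiction that only two values occur; then the sequence alternates, so $Y_{k-1}$ and $Y_{k+1}$ are linearly dependent for every $k$. Substituting this into Sibuya's connection relation $Y_{k-1}=C^{(k)}Y_k+\widetilde{C}^{(k)}Y_{k+1}$ (Chapter~8 of~\cite{Sibuya}), in which $\widetilde{C}^{(k)}\neq0$, and using that $Y_k$ and $Y_{k+1}$ are linearly independent, forces the Stokes multiplier $C^{(k)}$ to vanish for every $k$. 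It then remains to rule this out. One ingredient is that if all Stokes multipliers vanish then all Stokes matrices are trivial, so the formal monodromy of~\eqref{eqn:schrodinger} at the irregular singular point $\infty$ would have to agree with its actual monodromy; but the latter is trivial because $\mathbb{CP}^1\setminus\{\infty\}$ is simply connected, whereas a direct computation shows the formal monodromy of $y''-P(z)y=0$ is nontrivial in many cases. The residual cases, and the conclusion in general, I would take from Sibuya's explicit analysis of the Stokes data in Lemma~39.2 of~\cite{Sibuya} (for instance, when $n=1$ the relevant multipliers are given by Gamma functions and are manifestly nonzero).

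The main obstacle is precisely this last point: ruling out the completely degenerate Stokes configuration when $n+3$ is even. Everything else — the formula $w_k=-d_k/c_k$, the linear independence of adjacent subdominant solutions, and the odd-cycle argument — is formal once Sibuya's asymptotic and connection results are in hand, whereas the non-vanishing of the Stokes data is a genuinely analytic input and is the heart of Sibuya's Lemma~39.2.
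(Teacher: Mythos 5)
The paper offers no argument for this proposition at all---it is imported verbatim from Sibuya's Lemmas~39.1 and~39.2---so your write-up is a reconstruction rather than a parallel of anything in the text. The reconstruction is largely sound, and its key reduction is the right one: expressing $y_1,y_2$ in a basis $\{Y_k,Z\}$ with $Z$ dominant in $\mathscr{S}_k$ gives $w_k=-d_k/c_k$ for $Y_k=c_ky_1+d_ky_2$, hence $w_j=w_k$ precisely when $Y_j$ and $Y_k$ are proportional; part~(1) then follows from the asymptotic expansion of $Y_k$ on $\mathscr{S}_{k-1}\cup\mathscr{S}_k\cup\mathscr{S}_{k+1}$, which makes $Y_k$ dominant in the two neighbouring sectors, and the odd-cycle colouring observation settles part~(2) when $n+3$ is odd. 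All of this is correct and is, in substance, how Sibuya's Lemma~39.1 is proved.

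The one genuine gap is exactly where you place it: the even case of part~(2). Your proposed route via ``formal monodromy versus trivial actual monodromy'' is not an argument as stated---``nontrivial in many cases'' proves nothing---and it cannot be completed by soft growth or monodromy considerations alone, because half of the forbidden configuration really does occur. For $P(z)=z^2-(2N+1)$ the Hermite solution $H_N(z)e^{-z^2/2}$ is subdominant in both $\mathscr{S}_0$ and $\mathscr{S}_2$, so $w_0=w_2$ and one Stokes multiplier genuinely vanishes; what must be excluded is only that \emph{both} interleaved families degenerate simultaneously, and distinguishing these two situations requires the quantitative Stokes-data analysis that constitutes Sibuya's Lemma~39.2. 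Since the paper itself does nothing more than cite that lemma, resolving the even case by the same citation is acceptable---but you should present that step honestly as a citation and delete the monodromy sketch, which suggests a self-contained argument that does not exist at the level of generality you invoke.
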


Thus it follows from Proposition~\ref{prop:characterizegeneric} that we have a map 
\[
H:\mathbb{C}^n\rightarrow\mathcal{X}^*(\mathbb{D},\mathbb{M})
\]
to the set of generic points of $\mathcal{X}(\mathbb{D},\mathbb{M})$. Our next result follows from work of Bakken~\cite{Bakken} (reviewed in~\cite{Sibuya}):

\begin{theorem}
The space $\mathcal{X}^*(\mathbb{D},\mathbb{M})$ has the structure of a possibly non-Hausdorff complex manifold, and the map $H$ is a local biholomorphism.
\end{theorem}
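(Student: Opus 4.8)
The plan is to establish the two assertions separately: first that $\mathcal{X}^*(\mathbb{D},\mathbb{M})$ carries the structure of a (possibly non-Hausdorff) complex manifold, and then that the map $H$ defined via Sibuya's asymptotic values is a local biholomorphism onto its image.

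\textbf{The complex manifold structure.} I would begin by using the cluster coordinate charts. For each ideal triangulation $T$ of $(\mathbb{D},\mathbb{M})$, the earlier proposition identifies the set of points of $\mathcal{X}(\mathbb{D},\mathbb{M})$ generic with respect to $T$ with the torus $(\mathbb{C}^*)^n$ via the cross ratios $X_j$. Since every generic point is generic with respect to at least one triangulation, these torus charts cover $\mathcal{X}^*(\mathbb{D},\mathbb{M})$. The transition maps between charts associated to triangulations related by a flip are given explicitly by the rational formulas of Proposition~\ref{prop:transformcoordinates}, which are manifestly biholomorphic on the overlap (they are compositions of multiplication, inversion, and maps of the form $X\mapsto X(1+X_0)$, all rational with nowhere-vanishing denominators on the locus where both sets of coordinates are defined). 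Because any two triangulations are connected by a sequence of flips, all transition maps are biholomorphic, so the charts glue to give a complex manifold structure of dimension $n$. One only needs to check that the topology so defined is well behaved enough to be a manifold — the cocycle condition is automatic from the construction — and to note that Hausdorffness may fail because distinct points can be non-separated by these charts; this is why the statement only claims "possibly non-Hausdorff."

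\textbf{The map $H$ is a local biholomorphism.} Here is where Bakken's work enters. The map $H$ sends $(a_0,\dots,a_{n-1})\in\mathbb{C}^n$ to the configuration of asymptotic values $w_k(Y_{k_0},Y_{k_0+1})$, say, using two fixed consecutive subdominant solutions as a basis. To show $H$ is a local biholomorphism I would: (i) observe that $H$ is holomorphic, since Sibuya shows the subdominant solutions $Y_k(z,a_0,\dots,a_{n-1})$ depend holomorphically on the parameters (their construction is via a convergent series / integral equation uniform on sectors), hence so do the asymptotic values, which are certain limiting ratios — in fact they can be expressed through the Stokes multipliers, which are entire functions of the $a_i$; (ii) compute the differential of $H$ and show it is everywhere invertible. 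Step (ii) is exactly the content of Bakken's result (and Sibuya's Lemma~39.1, Lemma~39.2 surrounding it): the Jacobian of the map from the coefficient space to the monodromy/asymptotic data is non-degenerate. I would cite this directly rather than reprove it. Combined with the inverse function theorem, invertibility of $dH$ gives that $H$ is a local biholomorphism onto its image inside the manifold $\mathcal{X}^*(\mathbb{D},\mathbb{M})$ constructed above; the target lies in $\mathcal{X}^*$ by the genericity proposition just quoted.

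\textbf{Main obstacle.} The genuinely substantial input — non-degeneracy of the differential of the asymptotic-value map — is imported wholesale from Bakken and Sibuya, so the real work in writing this up is the bookkeeping for the manifold structure: verifying that the cross-ratio charts are mutually compatible (the flip formulas of Proposition~\ref{prop:transformcoordinates} are the key, together with the connectivity of the flip graph) and that $H$ lands in this chart atlas and is holomorphic when read in these coordinates. I expect the most delicate point to be matching Sibuya's analytic normalization of solutions to the cross-ratio coordinates so that holomorphic dependence on $(a_0,\dots,a_{n-1})$ is transparent; once that identification is in place, everything follows from the cited results and the inverse function theorem.
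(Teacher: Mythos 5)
Your proposal is correct, and for the second assertion it coincides with the paper: both reduce the local biholomorphism to Bakken's non-degeneracy result (your extra remark on holomorphic dependence of the subdominant solutions on the $a_i$ is implicit there). For the manifold structure, however, you take a genuinely different atlas. The paper uses only $n+3$ charts $U_k=\{\psi:\psi(p_k),\psi(p_{k+1}),\psi(p_{k+2})\ \text{distinct}\}$, which cover $\mathcal{X}^*(\mathbb{D},\mathbb{M})$ by Proposition~\ref{prop:characterizegeneric}; normalizing the three chosen points to $0,1,\infty$ identifies $U_k$ with an open subset of $(\mathbb{CP}^1)^n$, and the transition maps are M\"obius transformations in each coordinate, so compatibility is immediate. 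You instead use the cluster torus charts indexed by ideal triangulations, with transition maps given by the flip formulas of Proposition~\ref{prop:transformcoordinates}. This works, but it leaves you two verifications that the paper's charts sidestep: (i) the flip formula involves $(1+X_0)$ and $(1+X_0^{-1})^{-1}$, so you must check that $X_0\neq -1$ on the overlap of the two tori --- this follows from the identity ``$X_0=-1$ iff $z_1=z_3$ or $z_2=z_4$,'' so that a point generic for both triangulations automatically has $X_0\neq -1$; and (ii) for triangulations related by a longer chain of flips, the composite birational map must be shown regular and invertible on the full genericity overlap, which is cleanest to see by observing that both coordinate systems are cross ratios of the same configuration (at which point one is essentially back to the paper's normalization argument). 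What your atlas buys is that it exhibits directly the cluster structure on $\mathcal{X}^*(\mathbb{D},\mathbb{M})$ that the paper needs anyway for the identification with $\mathcal{X}(A_n)$ in Section~\ref{sec:StabilityConditionsAndTheClusterVariety}; the paper's atlas is the more economical route to the bare manifold statement.
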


\begin{proof}
Denote the points of $\mathbb{M}$ by $p_1,\dots,p_{n+3}$ so that the order of the indices is compatible with the orientation of~$\mathbb{D}$. Considering these indices modulo $n+3$, we define, for each $k=1,\dots,n+3$, the set 
\[
U_k=\{\psi\in\mathcal{X}^*(\mathbb{D},\mathbb{M}):\text{$\psi(p_k)$, $\psi(p_{k+1})$, and $\psi(p_{k+2})$ are distinct}\}.
\]
It follows from conditions~1 and~2 of Proposition~\ref{prop:characterizegeneric} that for any $\psi\in\mathcal{X}^*(\mathbb{D},\mathbb{M})$, we can find an index $k$ such that $\psi(p_k)$, $\psi(p_{k+1})$, and $\psi(p_{k+2})$ are distinct. Hence $\{U_k\}$ is an open cover of $\mathcal{X}^*(\mathbb{D},\mathbb{M})$. Suppose we are given a point $\psi\in U_1$. Then we can find an element $g\in PGL_2(\mathbb{C})$ such that 
\[
g\psi(p_1)=0, \quad g\psi(p_2)=1, \quad g\psi(p_3)=\infty.
\]
Define 
\[
F_1(\psi)=(g\psi(p_4),\dots,g\psi(p_{n+3})).
\]
In this way, we get a map $F_1:U_1\rightarrow V_1$ where 
\[
V_1=\{(z_4,\dots,z_{n+3})\in(\mathbb{CP}^1)^n:z_i\neq z_{i+1},z_4\neq\infty,z_{n+3}\neq0\}.
\]
This map is a homeomorphism with inverse given by 
\[
F_1^{-1}(z_4,\dots,z_{n+3})=(0,1,\infty,z_4,\dots,z_{n+3}).
\]
In a similar way, we define a manifold $V_k\subseteq(\mathbb{CP}^1)^n$ and a homeomorphisms $F_k:U_k\rightarrow V_k$ for all~$k$. The transition maps are holomorphic, giving $\mathcal{X}^*(\mathbb{D},\mathbb{M})$ the structure of a complex manifold. The main result of~\cite{Bakken} implies that the map $H$ defined above is a local biholomorphism into this manifold.
\end{proof}

Restricting $H$ to the complement of the discriminant locus in~$\mathbb{C}^n$, we obtain a local biholomorphism 
\[
H:\Quad_{fr}(\mathbb{D},\mathbb{M})\rightarrow\mathcal{X}^*(\mathbb{D},\mathbb{M}).
\]
It is easy to see that this map is $\mathbb{Z}/(n+3)\mathbb{Z}$-equivariant where a generator of this group acts on a differential $\phi(z)=P(z)dz^{\otimes2}$ by multiplying $z$ by a primitive $(n+3)$rd root of unity and acts on a map $\psi$ by cyclically permuting its values.

\subsection{Definition of the WKB solutions}

To understand the properties of the map $H$, we will use results from the theory of exact WKB analysis. This theory allows us to construct solutions of the differential equation 
\begin{align}
\label{eqn:schrodingerwithparameter}
\hbar^2\frac{d^2}{dz^2}y(z,\hbar)-\varphi(z)y(z,\hbar)=0
\end{align}
where $\varphi(z)$ is a meromorphic function and $\hbar$ is a small parameter. We do this by taking the Borel sums of certain formal power series in the parameter~$\hbar$. Let us review the basic construction, following~\cite{IwakiNakanishi}.

To define the formal power series, one begins with a family $\{S_k(z)\}_{k\geq-1}$ of functions satisfying the initial condition 
\[
S_{-1}^2=\varphi(z)
\]
and the recursion relations 
\[
2S_{-1}S_{k+1}+\sum_{\substack{k_1+k_2=k \\ 0\leq k_j\leq k}}S_{k_1}S_{k_2}+\frac{dS_k}{dz}=0
\]
for $k\geq-1$. There are two families $\{S_k^{(+)}(z)\}_{k\geq-1}$ and $\{S_k^{(-)}(z)\}_{k\geq-1}$ of functions satisfying these recursion relations depending on the choice of root $S_{-1}^{(\pm)}(z)=\pm\sqrt{\varphi(z)}$ for the initial condition. We consider the formal series 
\[
S^{(\pm)}(z,\hbar)=\sum_{k=-1}^\infty\hbar^kS_k^{(\pm)}(z)
\]
and the ``odd part'' 
\[
S_{\text{odd}}(z,\hbar)=\frac{1}{2}\left(S^{(+)}(z,\hbar)-S^{(-)}(z,\hbar)\right).
\]
We use this expression to get a pair of formal solutions of~\eqref{eqn:schrodingerwithparameter}.

\begin{definition}[\cite{IwakiNakanishi}, Definition~2.9]
\label{def:WKBsolutions}
The \emph{WKB solutions} are the formal solutions 
\[
\psi_{\pm}(z,\hbar)=\frac{1}{\sqrt{S_{\text{odd}}(z,\hbar)}}\exp\left(\pm\int^zS_{\text{odd}}(\zeta,\hbar)d\zeta\right)
\]
of equation~\eqref{eqn:schrodingerwithparameter}.
\end{definition}

The function $\varphi(z)$ in~\eqref{eqn:schrodingerwithparameter} determines a quadratic differential $\phi$ given in a local coordinate by $\phi(z)=\varphi(z)dz^{\otimes2}$. We denote by $\Sigma_\phi$ the spectral cover associated with this quadratic differential. The integral appearing in Definition~\ref{def:WKBsolutions} is defined by integrating the coefficient of each power of $\hbar$ in the formal series $S_{\text{odd}}(\zeta,\hbar)$ termwise. Since the coefficients of $S_{\text{odd}}(\zeta,\hbar)$ are multivalued functions on $\mathbb{CP}^1\setminus\Crit(\phi)$, the path of the integral should be considered in the Riemann surface~$\Sigma_\phi$. To get a well defined formal power series, we must also specify the lower limit of integration. We would like to take this lower limit to be a pole of~$\phi$, but we are unable to do so because the 1-form $S_{\text{odd}}(z,\hbar)dz$ is not integrable near a pole. Thus we introduce the regularized expression 
\[
S_{\text{odd}}^{\text{reg}}(z,\hbar)dz=\left(S_{\text{odd}}(z,\hbar)-\frac{1}{\hbar}\sqrt{\varphi(z)}\right)dz.
\]
From now on, we will assume the potential $\varphi(z)$ satisfies Assumptions~2.3 and~2.5 of~\cite{IwakiNakanishi}. These assumptions will always be satisfied in the examples we consider, where the potential is given by a nonconstant polynomial with simple zeros.

\begin{theorem}[\cite{IwakiNakanishi}, Proposition~2.8]
If $\phi$ satisfies Assumption~2.5 of~\cite{IwakiNakanishi}, then the formal series valued 1-form $S_{\text{odd}}^{\text{reg}}(z,\eta)dz$ is integrable at every pole of~$\phi$.
\end{theorem}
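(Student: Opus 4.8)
The plan is to reduce the assertion to a statement about the individual coefficients of the formal series in $\hbar$ and then to control those coefficients by a pole-order induction based on the defining recursion, whose essential feature is that it is homogeneous for a natural grading. Fix a pole $p$ of $\phi$; after a Möbius change of coordinate — which, having vanishing Schwarzian, leaves the recursion and its odd part unchanged — we may assume $p$ is the origin of $\mathbb{C}$, so that $\varphi$ has a pole of order $m$ there. Since $S_{-1}^{(\pm)}=\pm\sqrt{\varphi}$, the coefficient of $\hbar^{-1}$ in $S_{\text{odd}}$ is exactly $\tfrac12\bigl(\sqrt\varphi-(-\sqrt\varphi)\bigr)=\sqrt\varphi$, so the regularization removes precisely that term; and since $S_k^{(-)}$ equals $S_k^{(+)}$ for $k$ even and $-S_k^{(+)}$ for $k$ odd, the coefficient $\tfrac12\bigl(S_k^{(+)}-S_k^{(-)}\bigr)$ of $\hbar^k$ in $S_{\text{odd}}$ vanishes for even $k$ and equals $S_k^{(+)}$ for odd $k$. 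Hence
\[
S_{\text{odd}}^{\text{reg}}(z,\hbar)\,dz=\sum_{\substack{k\ge 1\\ k\ \text{odd}}}\hbar^{k}\,S_k^{(+)}(z)\,dz ,
\]
and it is enough to show that for every odd $k\ge 1$ the meromorphic $1$-form $S_k^{(+)}(z)\,dz$ is integrable at $p$. Since the $S_k^{(+)}$ are rational expressions in $\sqrt\varphi$ and the derivatives of $\varphi$, they are single-valued on the spectral cover $\Sigma_\phi$, and I would carry out the local analysis there, allowing half-integer pole orders at branch points lying over odd-order poles of $\phi$.

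The core is the estimate that, at a pole of $\phi$ of order $m$, the function $S_k^{(+)}$ has a pole of order at most $\omega_k:=(k+1)-\tfrac{km}{2}$ at $p$ for all $k\ge-1$, which I would prove by induction on $k$. The base cases hold because $S_{-1}^{(+)}=\sqrt\varphi$ has a pole of order $\tfrac m2=\omega_{-1}$ and, from the level-$(-1)$ recursion, $S_0^{(+)}=-\tfrac14(\log\varphi)'$ has a simple pole, i.e. a pole of order $1=\omega_0$. For the inductive step one solves the recursion in the form
\[
S_{k+1}^{(+)}=-\frac{1}{2S_{-1}^{(+)}}\Biggl(\,\sum_{\substack{i+j=k\\ 0\le i,j\le k}}S_i^{(+)}S_j^{(+)}+\frac{dS_k^{(+)}}{dz}\Biggr)
\]
and uses that multiplication of functions adds pole orders, differentiation raises the order of a pole by one, and division by $S_{-1}^{(+)}$ lowers it by $\tfrac m2$. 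Since $\omega_i+\omega_j=(k+2)-\tfrac{km}{2}$ whenever $i+j=k$ with $i,j\ge0$, and $\omega_k+1=(k+2)-\tfrac{km}{2}$ as well, the inductive hypotheses show that the bracketed expression has a pole of order at most $(k+2)-\tfrac{km}{2}$ at $p$; dividing by $2S_{-1}^{(+)}$ then yields a pole of order at most $(k+2)-\tfrac{(k+1)m}{2}=\omega_{k+1}$. Conceptually this just says that the recursion preserves the grading in which $S_k$ has weight $\omega_k$ and $d/dz$ has weight $1$.

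Granting the estimate, the conclusion is immediate. For a pole of order $m\ge3$ — which is what Assumption~2.5 of~\cite{IwakiNakanishi} guarantees, and which in any case holds for the potentials we care about, whose only pole is the one at infinity, of order $n+5$ — the numbers $\omega_k$ are strictly decreasing, so for every $k\ge1$ the $1$-form $S_k^{(+)}(z)\,dz$ has at $p$ a pole of order at most $\omega_1=2-\tfrac m2<1$; in the local coordinate it is therefore dominated by $z^{\frac m2-2}\,dz$, whose primitive is $O\bigl(z^{\frac m2-1}\bigr)$ and tends to $0$ as $z\to0$. Hence each coefficient $1$-form is integrable at $p$, and therefore so is the formal-series-valued $1$-form $S_{\text{odd}}^{\text{reg}}(z,\hbar)\,dz$.

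The step I expect to require the most care is the bookkeeping in the inductive estimate: one must verify that among all ways of splitting the convolution sum the bound $(k+2)-\tfrac{km}{2}$ is never exceeded — the contribution involving the simple pole of $S_0^{(+)}$ being precisely what keeps $\omega_1=2-\tfrac m2$ sharp rather than smaller — and one must track the half-integer orders over odd-order poles of $\phi$ without error. It is also worth noting that the argument genuinely needs $m>2$: at a double pole the estimate yields only a simple pole for $S_1^{(+)}\,dz$, whose residue need not vanish, so integrability can fail there, which is why poles of order two are excluded (or subjected to a separate residue condition) by the standing assumptions.
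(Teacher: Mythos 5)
Your argument is correct: the identification $S_{\text{odd}}^{\text{reg}}(z,\hbar)\,dz=\sum_{k\geq 1,\ k\ \text{odd}}\hbar^{k}S_{k}^{(+)}(z)\,dz$ via the $\hbar\mapsto-\hbar$ symmetry of the recursion, and the inductive pole-order bound $\omega_k=(k+1)-km/2$ giving order $<1$ for $k\geq1$ when $m\geq3$, together constitute the standard proof of this statement. The paper itself offers no proof but simply imports the result from Iwaki--Nakanishi, whose Proposition~2.8 is established by essentially this same pole-order analysis of the WKB recursion, so your proposal matches the intended argument (and your closing caveat about order-two poles correctly identifies why the standing assumptions, and the restriction here to a single pole of order $n+5$, are needed).
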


Thus we can employ following scheme to define the integral in Definition~\ref{def:WKBsolutions}.

\begin{definition}
\label{def:normalizedWKB}
Let $p$ be a pole of~$\phi$. Then the WKB solution \emph{normalized at $p$} is the expression 
\[
\psi_{\pm}(z,\hbar)=\frac{1}{\sqrt{S_{\text{odd}}(z,\hbar)}}\exp\left(\pm\left(\frac{1}{\hbar}\int_a^z\sqrt{\varphi(\zeta)}d\zeta+\int_p^zS_{\text{odd}}^{\text{reg}}(\zeta,\hbar)d\zeta\right)\right)
\]
where $a$ is any zero of $\phi$ independent of~$p$. The first integral is taken along a path $\gamma_a$ in~$\Sigma_\phi$, and the second integral is taken along a path~$\gamma_p$ in~$\Sigma_\phi$. We will write $\psi_{\pm}^{(\gamma_p,\gamma_a)}(z,\hbar)$ for the WKB solution normalized at~$p$ when we wish to emphasize the choice of~$\gamma_p$ and~$\gamma_a$.
\end{definition}

Thus we have a pair of well-normalized formal solutions of~\eqref{eqn:schrodingerwithparameter}. Each WKB solution can be expanded as a formal power series in $\hbar$ multiplied by an additional factor:
\[
\psi_{\pm}(z,\hbar)=\exp\left(\pm\frac{1}{\hbar}\int_a^z\sqrt{\varphi(\zeta)}d\zeta\right)\hbar^{1/2}\sum_{k=0}^\infty\hbar^k\psi_{\pm,k}(z).
\]
It is known that the series in this expression is divergent in general, and therefore, in order to get a genuine analytic solution of~\eqref{eqn:schrodingerwithparameter}, we must take the Borel resummation. By definition, an expression of the form $f(\eta)=e^{\eta s}\eta^{-\rho}\sum_{k=0}^\infty\eta^{-k}f_n$ is \emph{Borel summable} if the formal power series $g(\eta)=\sum_{k=0}^\infty\eta^{-k}f_n$ is Borel summable. In this case, the \emph{Borel sum} of $f(\eta)$ is defined as $\mathcal{S}[f](\eta)=e^{\eta s}\eta^{-\rho}\mathcal{S}[g](\eta)$ where $\mathcal{S}[g]$ is the Borel sum of~$g$.

\begin{theorem}[\cite{IwakiNakanishi}, Corollary~2.21]
\label{thm:Borelsummability}
Suppose the differential $\phi$ has at most one saddle trajectory and satisfies Assumptions~2.3 and~2.5 of~\cite{IwakiNakanishi}.
\begin{enumerate}
\item Let $\beta$ be a path in the spectral cover that projects to a generic trajectory of~$\phi$. Then the formal power series $\int_\beta S_{\text{odd}}^{\text{reg}}(z,\hbar)dz$ is Borel summable.
\item Let $D$ be a horizontal strip or half plane defined by~$\phi$. Then a WKB solution, normalized as in Definition~\ref{def:normalizedWKB}, is Borel summable at each point in~$D$. The Borel sum is an analytic solution of~\eqref{eqn:schrodingerwithparameter} on~$D$, which is also analytic in $\eta=\hbar^{-1}$ in a domain $\{\eta\in\mathbb{C}:|\arg\eta-\theta|<\pi/2, |\eta|\gg1\}$ and depends on the choice of paths $\gamma_a$ and~$\gamma_p$.
\end{enumerate}
\end{theorem}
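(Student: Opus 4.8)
Since this is essentially Corollary~2.21 of~\cite{IwakiNakanishi}, the honest move is simply to cite it; but here is how I would organize the argument, which ultimately rests on the fundamental Borel summability theorem for WKB solutions (Koike--Schäfke, building on work of Aoki, Dillinger--Delabaere--Pham, and others) in the packaging of~\cite{IwakiNakanishi}.

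The first step will be to obtain Gevrey-$1$ estimates for the formal series involved. Running the recursion $2S_{-1}S_{k+1}+\sum_{k_1+k_2=k}S_{k_1}S_{k_2}+S_k'=0$ together with Cauchy's estimate on a compact subset of $\mathbb{CP}^1\setminus\Crit(\phi)$ shows that the coefficients $S_k^{(\pm)}$ grow there at most like $C^k\,k!$; integrating $S_{\mathrm{odd}}^{\mathrm{reg}}(z,\hbar)\,dz$ termwise along a path $\beta$ that projects to a generic trajectory preserves this bound, since $\beta$ stays away from the zeros of $\phi$ and the regularization handles the endpoints near the poles. Hence the Borel transform of $\int_\beta S_{\mathrm{odd}}^{\mathrm{reg}}(z,\hbar)\,dz$ is holomorphic near the origin of the Borel plane. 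For part~1 it then remains to continue this Borel transform analytically along the ray in the direction attached to $\beta$ and to bound its growth there. The singularities of the Borel transform of a WKB series sit over the (relative) action integrals of saddle connections of $\phi$; the hypothesis of at most one saddle trajectory, combined with the observation that a generic trajectory of a saddle-free or once-saddled differential lies inside a horizontal strip or half plane that misses that connection, is exactly what keeps the ray of summation clear of singularities. Borel summability of $\int_\beta S_{\mathrm{odd}}^{\mathrm{reg}}\,dz$ follows.

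For part~2 I would first note that the same Gevrey-$1$ estimates, now made uniform on compacta of a horizontal strip or half plane $D$ and pushed up to the poles on $\partial D$ via the regularization, apply to the formal power series in
\[
\psi_{\pm}(z,\hbar)=\exp\left(\pm\frac{1}{\hbar}\int_a^z\sqrt{\varphi(\zeta)}\,d\zeta\right)\hbar^{1/2}\sum_{k\geq0}\hbar^k\psi_{\pm,k}(z),
\]
and then invoke the Borel summability theorem in the form used in~\cite{IwakiNakanishi}: because $D$ is a maximal region on which the distinguished coordinate $\int\sqrt{\varphi}$ is a biholomorphism onto an actual strip or half plane --- this is where the trajectory structure of $\phi$ enters --- the Borel transform extends along the ray in the direction $\theta$ associated to $D$ with uniform subexponential bounds, so the WKB solution normalized as in Definition~\ref{def:normalizedWKB} is Borel summable on all of $D$. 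That the Borel sum solves~\eqref{eqn:schrodingerwithparameter} will follow because Borel summation commutes with the operator $\hbar^2 d^2/dz^2-\varphi$ and annihilates the formal solution; analyticity in $\eta=\hbar^{-1}$ on the sector $\{|\arg\eta-\theta|<\pi/2,\ |\eta|\gg1\}$ is the standard output of a Laplace integral that converges for precisely those directions; and the dependence on $\gamma_a$ and $\gamma_p$ enters only through the scalar prefactor and the chosen branch of $\sqrt{\varphi}$.

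The hard part will be the uniformity over the \emph{non-compact} region $D$, which runs off to the pole of order $n+5$ at infinity: one has to check that the Gevrey estimates and the analytic continuation of the Borel transform survive in a full neighbourhood of this irregular singular point, and this is exactly what forces the regularized integrand $S_{\mathrm{odd}}^{\mathrm{reg}}$ and Assumptions~2.3 and~2.5 of~\cite{IwakiNakanishi} into the statement. I would also stress that the ``at most one saddle trajectory'' hypothesis is not a technical convenience but the crux: an additional saddle connection would deposit a singularity of the Borel transform on the very ray along which we are summing, and Borel summability in that direction would genuinely fail.
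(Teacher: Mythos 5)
Your instinct to simply cite the result is exactly what the paper does: Theorem~\ref{thm:Borelsummability} is imported verbatim from Corollary~2.21 of~\cite{IwakiNakanishi}, and the paper offers no proof beyond the remark that it rests on the unpublished work of Koike and Sch\"afke~\cite{KoikeSchafke} on Borel summability of formal solutions of the Riccati equation, with a sketch available in~\cite{Takei}. So on the main point you and the paper agree. Your supplementary outline, however, follows a somewhat different route from the one the cited works actually take: you organize the argument around the resurgence picture (Gevrey-$1$ estimates, then locating the singularities of the Borel transform at the action integrals of saddle connections and checking that the summation ray avoids them). That picture, due to Voros and Delabaere--Dillinger--Pham, is the right heuristic for \emph{why} the saddle-free hypothesis matters, but it is not what Koike--Sch\"afke prove; their argument establishes summability directly, via an integral equation for the Borel transform of the Riccati solution and a majorant/fixed-point estimate giving analytic continuation with exponential bounds along the ray, without first classifying the singularity locus. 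The two hardest steps in your sketch --- the analytic continuation of the Borel transform beyond a neighbourhood of the origin, and the uniformity of all estimates as $z$ runs off to the order-$(n+5)$ pole at infinity --- are precisely the content of that unpublished work, and you correctly flag them rather than claim them; just be aware that as written your outline assumes the resurgent structure of the Borel transform rather than deriving it, so it is a plausible reorganization of known results, not an independent proof.
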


Theorem~\ref{thm:Borelsummability} follows from unpublished work of Koike and Sch\"afke~\cite{KoikeSchafke}, which establishes the Borel summability of formal solutions of the Riccati equation. For a sketch of the proof of their result in the context relevant to the present paper, see~\cite{Takei}.

\subsection{Genericity with respect to the WKB triangulation}

In this subsection, we prove a version of Theorem~\ref{thm:introchambertotorus} from the introduction. As a first step, we examine the asymptotic behavior of the Borel sums of WKB solutions along a generic trajectory. As we will see, this asymptotic behavior depends on the direction in which the real part of the distinguished local coordinate is increasing.

Let $D$ be a horizontal strip or half plane determined by the differential $\phi$, and let $\beta$ be a generic trajectory in~$D$ connecting poles $p_1$ and $p_2$ on the boundary of~$D$. Let $a$ be a zero on the boundary of~$D$, and for any point~$z\in D$, choose paths $\gamma_{p_1}$, $\gamma_{p_2}$ and $\gamma_a$ in some sheet of $\pi^{-1}D$ whose projections connect the points $p_1$, $p_2$, and~$a$, respectively, to~$z$. An example is illustrated below where $D$ is a horizontal strip.
\[
\xy /l2pc/:
(0,-2)*{\times}="21";
(3,0)*{\bullet}="12";
(1,0.5)*{\circ}="22";
(-3,0)*{\bullet}="42";
(0,2)*{\times}="23";
"12";"22" **\crv{(2,0.5) & (1.5,0)};
"42";"22" **\crv{(-2.5,0) & (0,-1)};
"23";"22" **\crv{(-0.25,1) & (1,0.5)};
{"12"\PATH~={**@{.}}'"21"},
{"21"\PATH~={**@{.}}'"42"},
{"12"\PATH~={**@{.}}'"23"},
{"23"\PATH~={**@{.}}'"42"},
(-1,-0.31)*{<},
(-1,-0.7)*{\gamma_{p_2}},
(2,0.26)*{>},
(2,0.65)*{\gamma_{p_1}},
(0,1.55)*{\wedge},
(-0.25,1.25)*{\gamma_a},
(1,0.8)*{z},
(3.5,0)*{p_1},
(-3.5,0)*{p_2},
(0,2.5)*{a},
(1,-0.75)*{D},
\endxy
\]
In the following, we will write $\Psi_{D,\pm}^{(p,a)}=\mathcal{S}[\psi_{\pm}^{(\gamma_p,\gamma_a)}]$.

\begin{lemma}
\label{lem:canonicalcoordinatesubdominant}
Suppose the real part of $w(z)=\int_a^z\sqrt{\varphi(\zeta)}d\zeta$ is increasing as $z$ goes from~$p_1$ to~$p_2$ along~$\beta$. Then $\Psi_{D,+}^{(p_1,a)}(z)$ is decreasing as $z\rightarrow p_1$ along~$\beta$, and $\Psi_{D,-}^{(p_2,a)}(z)$ is decreasing as $z\rightarrow p_2$ along~$\beta$.
\end{lemma}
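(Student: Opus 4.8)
The plan is to split the Borel-summed WKB solution into its dominant exponential factor and a slowly varying remainder, and to estimate the two separately as $z\to p_1$ along $\beta$. By Definition~\ref{def:normalizedWKB} we may write
\[
\Psi_{D,+}^{(p_1,a)}(z)=\exp\!\left(\frac{1}{\hbar}w(z)\right)R(z,\hbar),
\]
where $w(z)=\int_a^z\sqrt{\varphi(\zeta)}\,d\zeta$ is the distinguished local coordinate and
\[
R(z,\hbar)=\mathcal{S}\!\left[\frac{1}{\sqrt{S_{\text{odd}}(z,\hbar)}}\exp\!\left(\int_{p_1}^zS_{\text{odd}}^{\text{reg}}(\zeta,\hbar)\,d\zeta\right)\right]
\]
is the Borel sum of the regularized part, which is well defined and analytic on $D$ by part~2 of Theorem~\ref{thm:Borelsummability}. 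It therefore suffices to show that $\bigl|\exp(\hbar^{-1}w(z))\bigr|$ decays to $0$ along $\beta$ as $z\to p_1$ and that this decay dominates the behaviour of $|R(z,\hbar)|$.

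First I would treat the exponential. Along the horizontal trajectory $\beta$ the imaginary part $\Im w$ is constant, while by hypothesis $\Re w$ increases strictly as $z$ runs from $p_1$ to $p_2$. Since $p_1$ is a pole of order $>2$, the trajectory runs out to $p_1$, so $w(z)$ is unbounded there; staying on the line $\Im w=\mathrm{const}$ with $\Re w$ monotone toward $p_1$ forces $\Re w(z)\to-\infty$ as $z\to p_1$ along $\beta$. On the relevant domain of Theorem~\ref{thm:Borelsummability} the direction is $\theta=0$ along the horizontal trajectory (the period $\int_\beta\sqrt{\varphi}\,d\zeta$ is real), so $\Re(\hbar^{-1})>0$, and then
\[
\Re\!\left(\frac{w(z)}{\hbar}\right)=\Re(\hbar^{-1})\,\Re w(z)-\Im(\hbar^{-1})\,\Im w(z)\longrightarrow-\infty ,
\]
so $\bigl|\exp(\hbar^{-1}w(z))\bigr|=\exp\bigl(\Re(\hbar^{-1}w(z))\bigr)\to0$, and in fact decreases monotonically once $z$ is close enough to $p_1$ for $\Re w$ to be monotone there.

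Next I would control $R$. The relation $S_{-1}^{(\pm)}=\pm\sqrt{\varphi}$ shows that the leading term of $S_{\text{odd}}$ is $\hbar^{-1}\sqrt{\varphi}$, so $S_{\text{odd}}^{-1/2}=\hbar^{1/2}\varphi^{-1/4}(1+O(\hbar))$ as a formal series, and near $p_1$ the prefactor $\varphi^{-1/4}$ tends to $0$ because $\varphi=P$ has a pole there; moreover the integrability of $S_{\text{odd}}^{\text{reg}}(z,\hbar)\,dz$ at the poles of $\phi$ gives $\int_{p_1}^zS_{\text{odd}}^{\text{reg}}\to0$ as $z\to p_1$, so the remaining factor tends to $1$. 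Hence $R(z,\hbar)$ is $O\bigl(|\varphi(z)|^{-1/4}\bigr)$, i.e.\ grows at most algebraically in $|z|$, whereas $\bigl|\exp(\hbar^{-1}w(z))\bigr|$ decays faster than any power of $|z|$; so $\Psi_{D,+}^{(p_1,a)}(z)\to0$ and $|\Psi_{D,+}^{(p_1,a)}(z)|$ is strictly decreasing along $\beta$ sufficiently near $p_1$. The claim for $\Psi_{D,-}^{(p_2,a)}$ follows verbatim with the sign of the exponential reversed, using that $\Re w(z)\to+\infty$ as $z\to p_2$ along $\beta$.

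The main obstacle is the last step: passing from the termwise algebraic bound on the coefficients of the formal series for $R$ to a genuine bound on its Borel sum that remains valid as $z$ approaches the boundary pole $p_1$. I would obtain this from the uniform estimates of Koike--Sch\"afke underlying Theorem~\ref{thm:Borelsummability}, which identify the WKB solution normalized at $p_1$ with the solution of $\hbar^2y''-\varphi y=0$ that is recessive at that pole; alternatively, in the present polynomial setting one can compare $\Psi_{D,+}^{(p_1,a)}$ directly with Sibuya's asymptotic expansion of the subdominant solution $Y_k$ in the Stokes sector containing the direction of $p_1$, which has exactly the stated algebraic-times-exponential form.
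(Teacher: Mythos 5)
Your proof follows essentially the same route as the paper's: factor the Borel-summed WKB solution into the exponential prefactor $\exp(\hbar^{-1}\int_{\gamma_a}\sqrt{\varphi})$ times the Borel sum of the regularized part, observe that the latter stays bounded as $z\to p_1$ along $\beta$, and conclude from the monotonicity of $\Re w$ that the exponential factor (hence the product) decreases. You actually supply more detail than the paper does on the boundedness of the regularized factor and on the role of $\Re(\hbar^{-1})>0$, which the paper leaves implicit.
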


\begin{proof}
By definition of the Borel resummation, we have 
\[
\Psi_{D,+}^{(p_1,a)}(z)=\exp\left(\frac{1}{\hbar}\int_{\gamma_a}\sqrt{\varphi(\zeta)}d\zeta\right)\mathcal{S}[g]
\]
where 
\[
g(z,\hbar)=\frac{1}{\sqrt{S_{\text{odd}}(z,\hbar)}}\exp\left(\int_{\gamma_{p_1}}S_{\text{odd}}^{\text{reg}}(\zeta,\hbar)d\zeta\right).
\]
The function $\mathcal{S}[g]$ is bounded as $z\rightarrow p_1$ along~$\beta$. Under our assumptions, the exponential factor in the expression for $\Psi_{D,+}^{(p_1,a)}$ is decreasing, so the function $\Psi_{D,+}^{(p_1,a)}$ is decreasing. The second statement is proved by a similar argument.
\end{proof}

\begin{lemma}
\label{lem:horizontalstrip}
We have 
\[
\Psi_{D,+}^{(p_1,a)}=x_{12}\Psi_{D,+}^{(p_2,a)} \quad \text{and} \quad \Psi_{D,-}^{(p_1,a)}=x_{12}^{-1}\Psi_{D,-}^{(p_2,a)}
\]
where 
\[
x_{12}=\mathcal{S}\left[\exp\left(\int_{p_1}^{p_2}S_{\text{odd}}^{\text{reg}}(z,\hbar)dz\right)\right].
\]
\end{lemma}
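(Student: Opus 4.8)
The plan is to compare the two WKB solutions $\psi_{\pm}^{(\gamma_{p_1},\gamma_a)}$ and $\psi_{\pm}^{(\gamma_{p_2},\gamma_a)}$ at the level of formal power series, notice that they differ by a $z$-independent formal series, and then pass to Borel sums. Writing out Definition~\ref{def:normalizedWKB},
\[
\psi_{\pm}^{(\gamma_p,\gamma_a)}(z,\hbar)=\frac{1}{\sqrt{S_{\text{odd}}(z,\hbar)}}\exp\left(\pm\left(\frac{1}{\hbar}\int_{\gamma_a}\sqrt{\varphi(\zeta)}\,d\zeta+\int_{\gamma_p}S_{\text{odd}}^{\text{reg}}(\zeta,\hbar)\,d\zeta\right)\right),
\]
so when the base point is changed from $p_1$ to $p_2$, the prefactor $S_{\text{odd}}(z,\hbar)^{-1/2}$ and the term $\int_{\gamma_a}\sqrt{\varphi}$ are unaffected; only $\int_{\gamma_p}S_{\text{odd}}^{\text{reg}}$ changes.

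Since $D$ is a horizontal strip or half plane, it is simply connected and contains no critical points of $\phi$ in its interior, so $\pi^{-1}D$ consists of two disjoint sheets each mapped homeomorphically onto $D$. Fix the sheet containing $\gamma_{p_1}$, $\gamma_{p_2}$, and $\gamma_a$; on this sheet $S_{\text{odd}}^{\text{reg}}(\zeta,\hbar)\,d\zeta$ is a single-valued formal-series-valued $1$-form whose integral depends only on the endpoints, and, as recalled above, $S_{\text{odd}}^{\text{reg}}(z,\hbar)\,dz$ is integrable at the poles of~$\phi$, in particular at $p_1$ and $p_2$. Let $\widetilde{\beta}$ be the lift of $\beta$ to this sheet, running from $p_1$ to $p_2$. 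Then $\gamma_{p_1}$ is homotopic rel endpoints, inside the sheet, to the concatenation of $\widetilde{\beta}$ with $\gamma_{p_2}$, whence
\[
\int_{\gamma_{p_1}}S_{\text{odd}}^{\text{reg}}\,d\zeta=\int_{p_1}^{p_2}S_{\text{odd}}^{\text{reg}}\,d\zeta+\int_{\gamma_{p_2}}S_{\text{odd}}^{\text{reg}}\,d\zeta
\]
as formal power series in~$\hbar$, where $\int_{p_1}^{p_2}$ denotes integration along~$\widetilde{\beta}$. Substituting into the formula above gives
\[
\psi_{\pm}^{(\gamma_{p_1},\gamma_a)}(z,\hbar)=\exp\left(\pm\int_{p_1}^{p_2}S_{\text{odd}}^{\text{reg}}(\zeta,\hbar)\,d\zeta\right)\psi_{\pm}^{(\gamma_{p_2},\gamma_a)}(z,\hbar),
\]
the factor in front being a formal power series in~$\hbar$ that does not depend on~$z$.

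It remains to apply Borel resummation. By part~1 of Theorem~\ref{thm:Borelsummability}, the formal power series $\int_{p_1}^{p_2}S_{\text{odd}}^{\text{reg}}\,d\zeta=\int_{\widetilde{\beta}}S_{\text{odd}}^{\text{reg}}\,d\zeta$ is Borel summable, and hence so are $\exp(\int_{p_1}^{p_2}S_{\text{odd}}^{\text{reg}})$ and its reciprocal $\exp(-\int_{p_1}^{p_2}S_{\text{odd}}^{\text{reg}})$; by part~2 of the same theorem, $\psi_{\pm}^{(\gamma_{p_2},\gamma_a)}$ is Borel summable on~$D$ with Borel sum $\Psi_{D,\pm}^{(p_2,a)}$. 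Using that $\mathcal{S}$ is multiplicative and that $\mathcal{S}[\exp(-f)]=\mathcal{S}[\exp(f)]^{-1}$, we apply $\mathcal{S}$ to the displayed identity with the $+$ sign to get $\Psi_{D,+}^{(p_1,a)}=x_{12}\Psi_{D,+}^{(p_2,a)}$, and with the $-$ sign to get $\Psi_{D,-}^{(p_1,a)}=x_{12}^{-1}\Psi_{D,-}^{(p_2,a)}$, where $x_{12}=\mathcal{S}[\exp(\int_{p_1}^{p_2}S_{\text{odd}}^{\text{reg}}\,dz)]$ exactly as in the statement.

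The only point requiring genuine care is the compatibility of Borel resummation with these algebraic manipulations: one needs to know that the product of the (Borel summable, $z$-independent) prefactor with the (Borel summable) WKB solution is again Borel summable with Borel sum equal to the product, and likewise for the exponential and its inverse. This multiplicativity is a standard property of Borel summation, while the Borel summability of each individual factor is precisely what Theorem~\ref{thm:Borelsummability} supplies; everything else is bookkeeping of paths and sheets of $\Sigma_\phi$ together with the elementary additivity of the integral.
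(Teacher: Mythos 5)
Your proposal is correct and is essentially the paper's own argument: both rest on the additivity of $\int S_{\text{odd}}^{\text{reg}}$ under concatenation of the paths $\gamma_{p_1}\simeq\widetilde{\beta}\cdot\gamma_{p_2}$ together with the multiplicativity of Borel resummation (the paper simply computes $x_{12}\Psi_{D,+}^{(p_2,a)}=\Psi_{D,+}^{(p_1,a)}$ by pulling $x_{12}$ inside $\mathcal{S}$, which is your argument read in reverse). Your extra care about sheets of $\Sigma_\phi$ and the identity $\mathcal{S}[\exp(-f)]=\mathcal{S}[\exp(f)]^{-1}$ only makes explicit what the paper leaves implicit.
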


\begin{proof}
By the properties of Borel resummation, we have 
\begin{align*}
x_{12}\Psi_{D,+}^{(p_2,a)} &= \exp\left(\frac{1}{\hbar}\int_{\gamma_a}\sqrt{\varphi(\zeta)}d\zeta\right)\mathcal{S}\biggl[\frac{1}{\sqrt{S_{\text{odd}}(z,\hbar)}}\exp\left(\int_{\gamma_{p_2}}S_{\text{odd}}^{\text{reg}}(\zeta,\hbar)d\zeta\right) \\
& \qquad \cdot\exp\left(\int_{p_1}^{p_2}S_{\text{odd}}^{\text{reg}}(\zeta,\hbar)d\zeta\right)\biggr] \\
&= \exp\left(\frac{1}{\hbar}\int_{\gamma_a}\sqrt{\varphi(\zeta)}d\zeta\right)\mathcal{S}\left[\frac{1}{\sqrt{S_{\text{odd}}(z,\hbar)}}\exp\left(\int_{\gamma_{p_1}}S_{\text{odd}}^{\text{reg}}(\zeta,\hbar)d\zeta\right)\right] \\
&= \Psi_{D,+}^{(p_1,a)}
\end{align*}
and similarly for the other identity.
\end{proof}

There is a natural $\mathbb{C}^*$-action on the space $\mathbb{C}^n\setminus\Delta$ considered above. For any $t\in\mathbb{C}^*$ and any $(a_0,\dots,a_{n-1})\in\mathbb{C}^n\setminus\Delta$, one has 
\[
t\cdot(a_0,a_1,\dots,a_{n-1})=(t^{n+1}a_0,t^na_1,\dots,t^2a_{n-1}).
\]
Using the isomorphism of Proposition~\ref{prop:identificationquad}, we can view this as a $\mathbb{C}^*$-action on the space of framed differentials $\Quad_{fr}(\mathbb{D},\mathbb{M})$. If $(\phi,f_0)\in\Quad_{fr}(\mathbb{D},\mathbb{M})$ is a framed differential given by $\phi(z)=P(z)dz^{\otimes}$ for some monic polynomial $P(z)$ as in Subsection~\ref{sec:TheModuliSpaceOfFramedDifferentials}, then an element $t\in\mathbb{C}^*$ acts on $(\phi,f_0)$ by modifying the coefficients of $P$ and leaving the framing $f_0$ unchanged. Consider, for each $\epsilon>0$, the region 
\[
\mathbb{H}(\epsilon)=\{\hbar\in\mathbb{C}:|\hbar|<\epsilon \text{ and } \Re(\hbar)>0\}.
\]
For any $\hbar\in\mathbb{H}(\epsilon)$, we define $H_\hbar(\phi,f)=H(t\cdot(\phi,f))$ where $t=\hbar^{-2/(n+3)}$ is defined using the principal branch of the logarithm. Thus, for any $\hbar\in\mathbb{H}(\epsilon)$, we obtain a map 
\[
H_\hbar:\Quad_{fr}(\mathbb{D},\mathbb{M})\rightarrow\mathcal{X}^*(\mathbb{D},\mathbb{M}).
\]
The following is a version of Theorem~\ref{thm:introchambertotorus}.

\begin{theorem}
\label{thm:chambertotorus}
Suppose $(\phi,f)$ is a saddle-free differential in $\Quad_{fr}(\mathbb{D},\mathbb{M})$. Then there exists $\epsilon>0$ such that for all points $\hbar\in\mathbb{H}(\epsilon)$, the point $H_\hbar(\phi,f)$ is generic with respect to the WKB triangulation of~$(\phi,f)$.
\end{theorem}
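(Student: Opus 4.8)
The plan is to identify the values of the map $H_\hbar(\phi,f)$ with cross ratios of Borel-summed WKB solutions, and then to use the asymptotic behavior of those solutions along generic trajectories to show that consecutive vertices of each quadrilateral of the WKB triangulation receive distinct points of $\mathbb{CP}^1$. First I would fix the saddle-free differential $\phi(z) = P(z)dz^{\otimes 2}$ and observe that, under the $\mathbb{C}^*$-action with parameter $t = \hbar^{-2/(n+3)}$, the differential equation $y''(z) - (t\cdot P)(z)y(z) = 0$ becomes, after the substitution $z \mapsto tz$, the equation $\hbar^2 y''(z) - P(z) y(z) = 0$ with $\varphi(z) = P(z)$; this is precisely the equation~\eqref{eqn:schrodingerwithparameter} for which the exact WKB machinery of the previous subsection applies. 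The asymptotic values $w_k$ defining $H(t\cdot(\phi,f))$ are then limits of ratios of solutions as $z \to \infty$ in the Stokes sectors, and the key point is that in each half plane and horizontal strip $D$ of $\phi$ the Borel sums $\Psi_{D,\pm}^{(p,a)}$ of the normalized WKB solutions, which exist by Theorem~\ref{thm:Borelsummability}, provide a distinguished basis of solutions adapted to that region.

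Next I would translate the genericity condition into a statement about these Borel sums. An edge $j$ of the WKB triangulation is a chosen generic trajectory running through some horizontal strip or half plane $D$, connecting two poles (equivalently, two marked points $p_{j_1}, p_{j_2}$ of $\widetilde{\mathbb{M}}$), and $H_\hbar(\phi,f)$ is generic with respect to $j$ precisely when the asymptotic values assigned to $p_{j_1}$ and $p_{j_2}$ are distinct. By Lemma~\ref{lem:canonicalcoordinatesubdominant}, along the trajectory $\beta$ in $D$, the solution $\Psi_{D,+}^{(p_1,a)}$ is the one that decays as $z \to p_1$ while $\Psi_{D,-}^{(p_2,a)}$ decays as $z \to p_2$; since $\Psi_{D,+}$ and $\Psi_{D,-}$ are linearly independent (their Wronskian is a nonzero constant, being the leading WKB term), these two solutions are genuinely different subdominant solutions at the two ends of $D$. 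Consequently the asymptotic value at $p_1$ is represented by the ratio $\Psi_{D,-}^{(\cdot)}/\Psi_{D,+}^{(\cdot)}$ evaluated near $p_1$ and the asymptotic value at $p_2$ by the same ratio near $p_2$; by the subdominance statement these two limits in $\mathbb{CP}^1$ are $0$ and $\infty$ in suitable coordinates, hence distinct, and in fact Lemma~\ref{lem:horizontalstrip} shows the transition between the two normalizations is multiplication by the nonzero scalar $x_{12}$, so the cross ratio $X_j$ attached to $j$ equals (up to sign and the factor $x_{12}$) a Borel-summed period, which is a well-defined nonzero complex number for $|\hbar|$ small. Running this over every strip and half plane, and hence over every edge of the WKB triangulation, gives that $H_\hbar(\phi,f)$ is generic with respect to that triangulation.

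The main obstacle will be the passage from the local picture inside a single region $D$ to the global identification of the asymptotic values $w_k$ in the Stokes sectors with the limiting ratios of WKB solutions. Two things need care here. First, the WKB solutions are normalized at finite poles (the pole at $\infty$ having been spread out into $n+3$ marked points on the boundary circle of the real blowup), so one must check that the decaying WKB solution in a half plane abutting the $k$th distinguished direction indeed has, as its limiting ratio against any transverse solution, the Sibuya asymptotic value $w_k$ — this is a matching of the WKB asymptotics in the region $\{|\arg\eta - \theta| < \pi/2\}$ of Theorem~\ref{thm:Borelsummability}(2) with Sibuya's subdominant solutions $Y_k$, and requires one to track how the rescaling $t = \hbar^{-2/(n+3)}$ rotates the relevant half plane into a fixed Stokes sector. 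Second, one must verify that the constraint $\Re(\hbar) > 0$, i.e. that $\theta = \arg\eta$ stays within $\pi/2$ of $0$, is exactly what guarantees the Borel sums in Theorem~\ref{thm:Borelsummability} are simultaneously valid for all the strips and half planes of the given saddle-free $\phi$; shrinking $\epsilon$ uniformly over the finitely many regions then yields the single $\epsilon > 0$ in the statement. Once these compatibility checks are in place, the genericity of $H_\hbar(\phi,f)$ with respect to the WKB triangulation is immediate from the local analysis above.
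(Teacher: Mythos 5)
Your proposal follows essentially the same route as the paper: rescale by $t=\hbar^{-2/(n+3)}$ to bring the equation into the exact-WKB form, use Lemma~\ref{lem:canonicalcoordinatesubdominant} to produce a decaying Borel-summed solution at each end of every edge of the WKB triangulation, use Lemma~\ref{lem:horizontalstrip} (equivalently, the Wronskian) to see these are linearly independent, and conclude that the two Stokes sectors at the ends of the edge receive distinct asymptotic values. The only cosmetic difference is that where you compute the limits of $\Psi_{D,-}/\Psi_{D,+}$ directly as $0$ and $\infty$, the paper invokes Lemma~4.1 of~\cite{Bakken} (linearly independent subdominant solutions in two sectors have distinct asymptotic values); the compatibility checks you flag at the end are exactly the points the paper's proof treats tersely.
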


\begin{proof}
Let $(\phi,f_0)\in\Quad_{fr}(\mathbb{D},\mathbb{M})$ be a saddle-free differential where $\phi(z)=P(z)dz^{\otimes2}$ is as in Subsection~\ref{sec:TheModuliSpaceOfFramedDifferentials}. Let $\beta$ be an edge of the WKB triangulation. Using the framing $f_0$, we can identify $\beta$ with a generic horizontal trajectory of the foliation of~$\mathbb{CP}^1$ defined by~$\phi$. By Lemma~\ref{lem:canonicalcoordinatesubdominant}, there exists $\epsilon>0$ such that, for any $\hbar\in\mathbb{H}(\epsilon)$, we can find a solution of 
\begin{align}
\label{eqn:schrodingerwithparameterspecialized}
y''(z)-\frac{1}{\hbar^2}P(z)y(z)=0
\end{align}
near each endpoint of $\beta$ which is decreasing as $z$ approaches the endpoint along~$\beta$. Moreover, Lemma~\ref{lem:horizontalstrip} shows that the analytic continuations of these solutions are linearly independent. If we take the coordinate transformation $\widetilde{z}=tz$, then~\eqref{eqn:schrodingerwithparameterspecialized} is equivalent to the equation 
\begin{align}
\label{eqn:transformedschrodinger}
\widetilde{y}''(\widetilde{z})-\widetilde{P}(\widetilde{z})\widetilde{y}(\widetilde{z})=0.
\end{align}
Here $\widetilde{P}$ is the polynomial obtained by modifying the coefficients of~$P$ by the action of~$t$ and we put $\widetilde{y}(\widetilde{z})=y(z)$. The ends of~$\beta$ lie in two Stokes sectors for the equation~\eqref{eqn:transformedschrodinger}. Call these Stokes sectors $\mathscr{S}_p$ and~$\mathscr{S}_q$. Let $Y_p$ be a solution of~\eqref{eqn:schrodingerwithparameterspecialized} which is subdominant in~$\mathscr{S}_p$ and $Y_q$ a solution which is subdominant in~$\mathscr{S}_q$. Since the space of solutions that are subdominant in a given Stokes sector is one-dimensional, it follows that~$Y_p$ and~$Y_q$ are linearly independent. By Lemma~4.1 of~\cite{Bakken}, the asymptotic values associated to the Stokes sectors $\mathscr{S}_p$ and~$\mathscr{S}_q$ are distinct. Hence $H_\hbar(\phi,f_0)$ is generic with respect to the WKB triangulation of~$(\phi,f_0)$.
\end{proof}

\section{Stability conditions and the cluster variety}
\label{sec:StabilityConditionsAndTheClusterVariety}

\subsection{Quivers with potentials and their mutations}

Finally, let us reinterpret the main construction of this paper from a more abstract point of view. We will begin by describing how the combinatorics of ideal triangulations can be encoded in quivers with potentials.

Suppose that $T$ is an ideal triangulation of $(\mathbb{D},\mathbb{M})$. We associate a quiver $Q(T)$ to this triangulation by drawing a single vertex in the interior of each arc of~$T$. If two arcs belong to the same triangle, we connect the vertices by an arrow oriented in the clockwise direction with respect to their common vertex. An example of an ideal triangulation and the associated quiver is illustrated below.
\[
\xy /l1.5pc/:
{\xypolygon8"A"{~:{(3,0):}}},
{"A1"\PATH~={**@{.}}'"A3"'},
{"A1"\PATH~={**@{.}}'"A6"'},
{"A3"\PATH~={**@{.}}'"A6"'},
{"A3"\PATH~={**@{.}}'"A5"'},
{"A8"\PATH~={**@{.}}'"A6"'},
(-0.15,0)*{\bullet}="a", 
(1.8,-0.8)*{\bullet}="b",
(1.75,2)*{\bullet}="c",
(2,-1.85)*{\bullet}="d",
(-1,0.75)*{\bullet}="e", 
\xygraph{
"a":"b",
"b":"c",
"c":"a",
"b":"d",
"a":"e",
}
\endxy
\]
Note that if every triangle of~$T$ contains a boundary segment, then the underlying graph of~$Q(T)$ is the $A_n$ Dynkin diagram.

Recall that a potential for a quiver $Q$ is defined as a linear combination of oriented cycles of~$Q$. We refer to~\cite{DWZ} for the general theory of a quiver with potential. Here we work with a canonical potential for the quiver~$Q(T)$. In the definition that follows, a triangle of~$T$ will be called \emph{internal} if all of its edges are arcs. Note that each internal triangle $t$ contains a 3-cycle $C(t)$ of the quiver~$Q(T)$, oriented in the counterclockwise direction. The canonical potential for~$Q(T)$ is given by 
\[
W(T)=\sum_tC(t)
\]
where the sum runs over all internal triangles of~$T$. This potential is special case of the one introduced by Labardini-Fragoso for general triangulated surfaces~\cite{L}.

An important idea in the theory of cluster algebras is the notion of quiver mutation. It is defined for a finite quiver without loops or oriented 2-cycles. Such a quiver is said to be \emph{2-acyclic}.

\begin{definition}
\label{def:quivermutation}
Let $k$ be a vertex of a 2-acyclic quiver $Q$. Then we define a new quiver $\mu_k(Q)$, called the quiver obtained by \emph{mutation} in the direction~$k$, as follows.
\begin{enumerate}
\item For each pair of arrows $i\rightarrow k\rightarrow j$, introduce a new arrow $i\rightarrow j$.
\item Reverse all arrows incident to~$k$.
\item Remove the arrows from a maximal set of pairwise disjoint 2-cycles.
\end{enumerate}
The last item means, for example, that we replace the diagram $\xymatrix{i \ar@< 4pt>[r] \ar@< -4pt>[r] & j \ar[l]}$ by $\xymatrix{i \ar[r] & j}$.
\end{definition}

In~\cite{DWZ}, Derksen, Weyman, and~Zelevinsky define a similar notion of mutation for a quiver with potential. To do this, they first impose an equivalence relation on the set of quivers with potentials called right equivalence. Roughly speaking, two potentials are right equivalent if they are related by an isomorphism of path algebras that fixes the zero length paths. A potential is said to be \emph{reduced} if it is a sum of cycles of length $\geq3$. If $(Q,W)$ is any reduced quiver with potential and $k$ is a vertex of~$Q$ which is not contained in an oriented 2-cycle, then Derksen, Weyman, and~Zelevinsky define a reduced quiver with potential $\mu_k(Q,W)$ called the quiver with potential obtained by mutation in the direction~$k$. It is well defined up to right equivalence and depends only on the right equivalence class of $(Q,W)$. We refer to~\cite{DWZ} for the detailed definitions.

There is a subtle point concerning the definition of mutation in~\cite{DWZ}. If $(Q,W)$ is a 2-acyclic quiver with potential, then $\mu_k(Q,W)$ exists for any~$k$, but it may not be 2-acyclic. We call $(Q,W)$ \emph{nondegenerate} if the quiver with potential obtained by applying any finite sequence of mutations is 2-acyclic. If $(Q,W)$ is a nondegenerate quiver with potential, then we can write $\mu_k(Q,W)=(Q',W')$ where $Q'=\mu(Q)$. That is, mutation of a quiver with potential agrees with mutation in the sense of Definition~\ref{def:quivermutation}.

The following theorem completely describes mutation of a quiver with potential in the class of examples studied in this paper.

\begin{theorem}[\cite{L}, Theorems~30,~31]
The quiver with potential associated to any ideal triangulation of $(\mathbb{D},\mathbb{M})$ is nondegenerate. Let $T$ and $T'$ be ideal triangulations related by a flip at some edge. Then, up to right equivalence, $(Q(T),W(T))$ and $(Q(T'),W(T'))$ are related by a mutation at the corresponding vertex.
\end{theorem}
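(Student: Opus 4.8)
The plan is to deduce both assertions from the purely local effect of a flip, combined with the explicit mutation rule for quivers with potentials of Derksen, Weyman, and Zelevinsky~\cite{DWZ}. The logical order is to establish the flip--mutation compatibility first and then obtain nondegeneracy as a formal consequence. As a preliminary step I would check the statement at the level of quivers alone: if $T'$ is obtained from $T$ by a flip at the arc carrying the vertex $k$, then $\mu_k(Q(T))=Q(T')$ in the sense of Definition~\ref{def:quivermutation}. Since a flip changes $T$ only inside the quadrilateral formed by the two triangles containing $k$, the quivers $Q(T)$ and $Q(T')$ differ only among the arrows incident to $k$ and the arrows joining the four sides of that quadrilateral, so this reduces to a finite inspection of pictures. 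I would also record here that, because $\mathbb{D}$ is a disk with all marked points on its boundary, there are no self-folded triangles and no two arcs lie in two common triangles; hence $Q(T)$ has neither a loop nor a $2$-cycle, and the same is true of every triangulation reachable from $T$ by flips.

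The main work is to upgrade this to the level of potentials, i.e.\ to show that $\mu_k(Q(T),W(T))$ is right equivalent to $(Q(T'),W(T'))$. Because $W(T)$ is the sum of the counterclockwise $3$-cycles $C(t)$ over internal triangles $t$, and the only triangles altered by the flip are the (at most two) triangles adjacent to $k$, the potentials $W(T)$ and $W(T')$ coincide away from $k$, so the whole computation is local. I would enumerate the local configurations around $k$ --- whether each of the two triangles containing $k$ is internal, and how many of the four sides of the quadrilateral are boundary segments rather than arcs --- which for a disk is a short list. In each configuration I would run the DWZ mutation: form the premutation $\widetilde{\mu}_k$, which adjoins an arrow $[\alpha\beta]$ for every $2$-path $\alpha\beta$ through $k$, reverses the arrows at $k$ to arrows $\alpha^\ast,\beta^\ast$, and replaces the potential by $[W]+\sum[\alpha\beta]\beta^\ast\alpha^\ast$, where $[W]$ is $W(T)$ with each length-two subpath through $k$ rewritten by the corresponding $[\alpha\beta]$; then split off the trivial part, using the splitting theorem of~\cite{DWZ}, to extract the reduced quiver with potential. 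Each case is completed by exhibiting an explicit change of arrows realizing the right equivalence with $(Q(T'),W(T'))$.

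Granting this, nondegeneracy follows by induction along the flip graph. Any finite sequence of mutations applied to $(Q(T),W(T))$ is, at each stage, carried out at a vertex not lying in a $2$-cycle, because by the preliminary step all the quivers that occur are $2$-acyclic; the previous paragraph then identifies the result, up to right equivalence, with $(Q(T^{(m)}),W(T^{(m)}))$ for a triangulation $T^{(m)}$ obtained by a sequence of flips, and $Q(T^{(m)})$ is again $2$-acyclic. Hence every quiver with potential in the mutation class of $(Q(T),W(T))$ is $2$-acyclic, which is exactly the assertion of nondegeneracy. Connectivity of the flip graph (\cite{FST}, Proposition~3.8) is not needed for nondegeneracy itself; it only guarantees that the induction eventually reaches every triangulation.

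The step I expect to be the main obstacle is the potential-level computation. After the DWZ mutation one must verify that the cubic terms produced --- the terms $[\alpha\beta]\beta^\ast\alpha^\ast$ together with the images of the old $3$-cycles under the substitution $[\,\cdot\,]$ --- interact with the trivial part so as to leave precisely the new $3$-cycles of $W(T')$, up to a change of arrow variables. Getting the signs right and keeping track of the arrows that belong simultaneously to the flipped quadrilateral and to a neighboring internal triangle, in every local configuration, is what makes this delicate; the disk hypothesis keeps the list of configurations short and removes the self-folded-triangle complications that arise for general triangulated surfaces in~\cite{L}.
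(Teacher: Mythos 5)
The paper offers no proof of this statement --- it is quoted directly from Labardini-Fragoso \cite{L} --- and your outline is essentially the argument of that reference specialized to the disk: flip equals quiver mutation, a local computation via the DWZ premutation and splitting theorem at the level of potentials, and nondegeneracy by induction along the flip graph using the fact that every quiver $Q(T)$ for a polygon is $2$-acyclic and every arc is flippable. The strategy is sound and correctly identifies both the one step requiring real work (the finite case check that the cubic terms produced by premutation reduce to $W(T')$ after splitting off the trivial part) and the simplifications the disk hypothesis buys over the general surface case treated in \cite{L}.
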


\subsection{Hearts and tilting}

Let $\mathcal{D}$ be a $\Bbbk$-linear triangulated category. We will denote the shift functor on~$\mathcal{D}$ by $[1]$ and use the notation 
\[
\Hom_{\mathcal{D}}^i(A,B)\coloneqq\Hom_{\mathcal{D}}(A,B[i])
\]
for $A$,~$B\in\mathcal{D}$.

\begin{definition}
A \emph{t-structure} on $\mathcal{D}$ is a full subcategory $\mathcal{P}\subseteq\mathcal{D}$ satisfying the following:
\begin{enumerate}
\item $\mathcal{P}[1]\subseteq\mathcal{P}$.
\item Let $\mathcal{P}^\perp=\{G\in\mathcal{D}:\Hom(F,G)=0 \text{ for all $F\in\mathcal{P}$}\}$. Then for every object $E\in\mathcal{D}$, there is a triangle $F\rightarrow E\rightarrow G\rightarrow F[1]$ in~$\mathcal{D}$ with $F\in\mathcal{P}$ and $G\in\mathcal{P}^\perp$.
\end{enumerate}
A t-structure $\mathcal{P}\subseteq\mathcal{D}$ is said to be \emph{bounded} if 
\[
\mathcal{D}=\bigcup_{i,j\in\mathbb{Z}}\mathcal{P}^\perp[i]\cap\mathcal{P}[j].
\]
If $\mathcal{P}\subseteq\mathcal{D}$ is a t-structure, its \emph{heart} is defined as the intersection 
\[
\mathcal{A}=\mathcal{P}^\perp[1]\cap\mathcal{P}.
\]
In the following, when we talk about a \emph{heart} in~$\mathcal{D}$, we always mean the heart of some bounded t-structure.
\end{definition}

It is known that any heart in~$\mathcal{D}$ is a full abelian subcategory. A heart will be called \emph{finite length} if it is artinian and noetherian as an abelian category. Given full subcategories $\mathcal{A}$,~$\mathcal{B}\subseteq\mathcal{D}$, the \emph{extension closure} $\mathcal{C}=\langle\mathcal{A},\mathcal{B}\rangle\subseteq\mathcal{D}$ is defined as the smallest full subcategory of~$\mathcal{D}$ containing both $\mathcal{A}$ and~$\mathcal{B}$ such that if $X\rightarrow Y\rightarrow Z\rightarrow X[1]$ is a triangle in~$\mathcal{D}$ with $X$,~$Z\in\mathcal{C}$, then $Y\in\mathcal{C}$.

\begin{definition}
A pair of hearts $(\mathcal{A}_1,\mathcal{A}_2)$ in~$\mathcal{D}$ is called a \emph{tilting pair} if either of the equivalent conditions 
\[
\mathcal{A}_2\subseteq\langle\mathcal{A}_1,\mathcal{A}_1[-1]\rangle,
\quad
\mathcal{A}_1\subseteq\langle\mathcal{A}_2[1],\mathcal{A}_2\rangle
\]
is satisfied. In this case, we also say that $\mathcal{A}_1$ is a \emph{left tilt} of~$\mathcal{A}_2$ and that $\mathcal{A}_2$ is a \emph{right tilt} of~$\mathcal{A}_1$.
\end{definition}

If $(\mathcal{A}_1,\mathcal{A}_2)$ is a tilting pair in~$\mathcal{D}$, then the subcategories 
\[
\mathcal{T}=\mathcal{A}_1\cap\mathcal{A}_2[1], \quad \mathcal{F}=\mathcal{A}_1\cap\mathcal{A}_2
\]
form a torsion pair $(\mathcal{T},\mathcal{F})\subseteq\mathcal{A}_1$. Conversely, if $(\mathcal{T},\mathcal{F})\subseteq\mathcal{A}_1$ is a torsion pair, then the subcategory $\mathcal{A}_2=\langle\mathcal{F},\mathcal{T}[-1]\rangle$ is a heart, and the pair $(\mathcal{A}_1,\mathcal{A}_2)$ is a tilting pair.

We will be interested in a special case of the tilting construction. Suppose $\mathcal{A}$ is a finite length heart and $S\in\mathcal{A}$ a simple object. Let $\langle S\rangle\subseteq\mathcal{A}$ be the full subcategory consisting of objects $E\in\mathcal{A}$ all of whose simple factors are isomorphic to~$S$, and consider the full subcategories 
\[
S^\perp=\{E\in\mathcal{A}:\Hom_{\mathcal{A}}(S,E)=0\},
\quad
{^\perp S}=\{E\in\mathcal{A}:\Hom_{\mathcal{A}}(E,S)=0\}.
\]
Then the pairs $(\langle S\rangle,S^\perp)$ and $({^\perp S},\langle S\rangle)$ are torsion pairs.

\begin{definition}
The hearts 
\[
\mu_S^-(\mathcal{A})=\langle S[1],{^\perp S}\rangle,
\quad 
\mu_S^+(\mathcal{A})=\langle S^\perp,S[-1]\rangle
\]
are called the \emph{left tilt} and \emph{right tilt} of~$\mathcal{A}$ at~$S$, respectively.
\end{definition}

Now suppose $\mathcal{D}$ is a triangulated category satisfying the $\mathrm{CY}_3$ property 
\[
\Hom_{\mathcal{D}}^i(A,B)\cong\Hom_{\mathcal{D}}^{3-i}(B,A)^*
\]
for all objects $A$,~$B\in\mathcal{D}$. In addition, we will assume that $\mathcal{D}$ is algebraic in the sense of~\cite{Keller}. To a finite-length heart $\mathcal{A}\subseteq\mathcal{D}$, we can associate a quiver $Q(\mathcal{A})$ whose vertices are indexed by the isomorphism classes of simple objects $S_i\in\mathcal{A}$ with 
\[
\varepsilon_{ij}=\dim_{\Bbbk}\Ext_{\mathcal{A}}^1(S_i,S_j)
\]
arrows from the vertex~$i$ to the vertex~$j$. A finite-length heart is called \emph{nondegenerate} if it can be tilted indefinitely at all simple objects and in both directions without leaving the class of finite-length hearts, and if, for every heart $\mathcal{B}$ which is obtained from $\mathcal{A}$ by applying a sequence of tilts at simple objects, the quiver $Q(\mathcal{B})$ is 2-acyclic.

The quiver $Q(\mathcal{A})$ corresponding to a finite-length heart $\mathcal{A}$ has no loops if and only if every simple object is spherical in the sense of~\cite{SeidelThomas}. If $S\in\mathcal{D}$ is a spherical object, there is an associated autoequivalence $\Tw_S\in\Aut(\mathcal{D})$ called a \emph{spherical twist}~\cite{SeidelThomas}. If $\mathcal{A}$ is a nondegenerate finite-length heart containing $n$ simple objects $S_1,\dots,S_n$ up to isomorphism, then the spherical twists in these objects generate a group which we denote 
\[
\Sph_{\mathcal{A}}(\mathcal{D})=\langle\Tw_{S_1},\dots,\Tw_{S_n}\rangle\subseteq\Aut(\mathcal{D}).
\]
An important property of these spherical twist functors is the following.

\begin{proposition}
For every simple object $S\in\mathcal{A}$, we have 
\[
\Tw_S(\mu_S^-(\mathcal{A}))=\mu_S^+(\mathcal{A}).
\]
\end{proposition}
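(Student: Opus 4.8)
The plan is to prove only the inclusion $\Tw_S(\mu_S^-(\mathcal{A}))\subseteq\mu_S^+(\mathcal{A})$. This is enough: the image of a heart under an exact autoequivalence is again a heart, and a heart contained in another one coincides with it (given an object $E$ of the larger heart, its cohomology objects with respect to the t-structure of the smaller heart lie in the smaller heart, hence, being also objects of the larger heart, which has no negative self-extensions, all but the zeroth of them vanish). Since $\Tw_S$ is a triangle equivalence it commutes with extension closures, so writing $\mu_S^-(\mathcal{A})=\langle S[1],{}^\perp S\rangle$ we have $\Tw_S(\mu_S^-(\mathcal{A}))=\langle\,\Tw_S(S[1]),\ \Tw_S(E):E\in{}^\perp S\,\rangle$; and since $\mu_S^+(\mathcal{A})=\langle S^\perp,S[-1]\rangle$ is extension-closed, it suffices to check that $\Tw_S(S[1])$ and every $\Tw_S(E)$ with $E\in{}^\perp S$ belong to $\mu_S^+(\mathcal{A})$.

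The first of these is the elementary $\mathrm{CY}_3$ computation. Because $\mathcal{A}$ is nondegenerate, $Q(\mathcal{A})$ has no loops, so $\Ext^1_{\mathcal{A}}(S,S)=0$; together with the $\mathrm{CY}_3$ property this gives $\Hom^\bullet_{\mathcal{D}}(S,S)\cong\Bbbk\oplus\Bbbk[-3]$, so $S$ is spherical and the degree-zero component of the evaluation map in the defining triangle $\Hom^\bullet_{\mathcal{D}}(S,S)\otimes S\to S\to\Tw_S(S)$ of~\cite{SeidelThomas} is the identity. Hence $\Tw_S(S)\cong S[-2]$, so $\Tw_S(S[1])\cong S[-1]\in\mu_S^+(\mathcal{A})$, and applying $\Tw_S^{-1}$ we also record $\Tw_S^{-1}(S)\cong S[2]$, which is used below.

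Now fix $E\in{}^\perp S$, so $\Hom_{\mathcal{A}}(E,S)=0$. Using $S,E\in\mathcal{A}$ together with the $\mathrm{CY}_3$ duality $\Hom^i_{\mathcal{D}}(S,E)\cong\Hom^{3-i}_{\mathcal{D}}(E,S)^*$, one checks that $\Hom^\bullet_{\mathcal{D}}(S,E)$ is concentrated in degrees $0,1,2$, the vanishing in degree $3$ being exactly the hypothesis $E\in{}^\perp S$. I would then feed the defining triangle $\Hom^\bullet_{\mathcal{D}}(S,E)\otimes S\to E\to\Tw_S(E)\to$ into the long exact sequence of cohomology for the t-structure with heart $\mathcal{A}$. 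The evaluation map $\Hom_{\mathcal{A}}(S,E)\otimes S\to E$ is injective (a standard consequence of $S$ being simple), which forces $H^{-1}_{\mathcal{A}}(\Tw_S(E))=0$; the same sequence gives $H^i_{\mathcal{A}}(\Tw_S(E))=0$ for $i\geq2$ and $H^1_{\mathcal{A}}(\Tw_S(E))\cong\Ext^1_{\mathcal{A}}(E,S)^*\otimes S\in\langle S\rangle$. For the last point $H^0_{\mathcal{A}}(\Tw_S(E))\in S^\perp$ I would use the equivalence: $\Hom^0_{\mathcal{D}}(S,\Tw_S(E))\cong\Hom^0_{\mathcal{D}}(\Tw_S^{-1}(S),E)\cong\Hom^{-2}_{\mathcal{D}}(S,E)=0$, and, since $\Tw_S(E)$ now has $\mathcal{A}$-cohomology only in degrees $0$ and $1$, the truncation triangle $H^0_{\mathcal{A}}(\Tw_S(E))\to\Tw_S(E)\to H^1_{\mathcal{A}}(\Tw_S(E))[-1]\to$ identifies $\Hom_{\mathcal{A}}(S,H^0_{\mathcal{A}}(\Tw_S(E)))$ with $\Hom^0_{\mathcal{D}}(S,\Tw_S(E))=0$. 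Reading this same triangle as exhibiting $\Tw_S(E)$ as an extension of an object of $\langle S\rangle[-1]$ by an object of $S^\perp$ (equivalently, invoking the Happel--Reiten--Smal\o{} description of $\mu_S^+(\mathcal{A})$ as the heart of objects $X$ with $H^0_{\mathcal{A}}(X)\in S^\perp$, $H^1_{\mathcal{A}}(X)\in\langle S\rangle$ and $H^i_{\mathcal{A}}(X)=0$ otherwise), this gives $\Tw_S(E)\in\mu_S^+(\mathcal{A})$.

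Combining the two points gives $\Tw_S(\mu_S^-(\mathcal{A}))\subseteq\mu_S^+(\mathcal{A})$, hence equality. I expect the main obstacle to be the third paragraph: one cannot reduce the assertion for an arbitrary $E\in{}^\perp S$ to the case of a simple object, because the torsion class ${}^\perp S$ is strictly larger than the extension closure of the simple objects different from $S$, so the long exact sequence must genuinely be analyzed for general $E$; and the vanishings that make it work — injectivity of the evaluation map, concentration of $\Tw_S(E)$ in cohomological degrees $0$ and $1$, and $H^0_{\mathcal{A}}(\Tw_S(E))\in S^\perp$ — all ultimately rest on the loop-free hypothesis $\Ext^1_{\mathcal{A}}(S,S)=0$ and the computation $\Tw_S^{\pm1}(S)\cong S[\mp2]$.
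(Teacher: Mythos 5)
The paper states this proposition without proof (it is quoted as a known property of spherical twists, cf.\ \cite{SeidelThomas} and Section~7 of \cite{BridgelandSmith}), so there is no in-paper argument to compare against; judged on its own, your proof is correct. The overall strategy is the standard one: reduce to one inclusion via the fact that a heart of a bounded t-structure contained in another must equal it, use that $\Tw_S$ preserves extension closures to reduce to generators, compute $\Tw_S(S)\cong S[-2]$ from sphericity in the $\mathrm{CY}_3$ setting, and for $E\in{}^\perp S$ run the long exact sequence of $\mathcal{A}$-cohomology on the twist triangle. The individual steps check out: the vanishing of $\Hom^3_{\mathcal{D}}(S,E)$ is exactly CY$_3$ duality applied to $E\in{}^\perp S$; injectivity of the evaluation map kills $H^{-1}_{\mathcal{A}}$; the LES gives $H^1_{\mathcal{A}}(\Tw_S(E))\cong\Hom^2_{\mathcal{D}}(S,E)\otimes S\in\langle S\rangle$ and kills everything in degrees $\geq2$. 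You were right to handle $H^0_{\mathcal{A}}(\Tw_S(E))\in S^\perp$ separately via $\Hom(S,\Tw_S(E))\cong\Hom(\Tw_S^{-1}S,E)\cong\Hom^{-2}(S,E)=0$ rather than reading it off the LES: the LES only exhibits $H^0_{\mathcal{A}}(\Tw_S(E))$ as an extension with quotient $\Hom^1(S,E)\otimes S$, which controls maps \emph{to} $S$ but not maps \emph{from} $S$, so the adjunction computation is genuinely needed there. Two points stated a little loosely but easily repaired: the injectivity of $\mathrm{ev}\colon\Hom(S,E)\otimes S\to E$ uses $\End(S)=\Bbbk$ (part of sphericity) and not merely simplicity; and the "heart inside a heart" lemma is best justified by noting that a nonzero top (resp.\ bottom) cohomology object in degree $m>0$ (resp.\ $<0$) would give a nonzero morphism in $\Hom^{-m}$ between objects of the larger heart.
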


\subsection{The category associated to a quiver with potential}

Suppose $\mathcal{D}$ is a triangulated category with the $\mathrm{CY}_3$ property. We have seen how to associate to any finite-length heart $\mathcal{A}$ a quiver $Q(\mathcal{A})$. In fact, after choosing a suitable potential on~$Q(\mathcal{A})$, we can invert this construction.

\begin{theorem}[\cite{BridgelandSmith}, Theorem~7.2]
Let $(Q,W)$ be a quiver with reduced potential. Then there is an associated $\mathrm{CY}_3$ triangulated category $\mathcal{D}(Q,W)$, with a bounded t-structure whose heart 
\[
\mathcal{A}=\mathcal{A}(Q,W)\subseteq\mathcal{D}(Q,W)
\]
is of finite length, such that the associated quiver $Q(\mathcal{A})$ is isomorphic to~$Q$.
\end{theorem}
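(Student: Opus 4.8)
The final statement is the Bridgeland--Smith theorem: given a quiver with reduced potential $(Q,W)$, there is a $\mathrm{CY}_3$ triangulated category $\mathcal{D}(Q,W)$ with a finite-length heart whose quiver is $Q$. Here's the standard approach via Ginzburg algebras.

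The plan is to construct $\mathcal{D}(Q,W)$ as a subcategory of the derived category of the complete Ginzburg dg algebra $\Gamma = \Gamma(Q,W)$ associated to the quiver with potential. First I would recall Ginzburg's construction: one builds a graded quiver $\bar Q$ from $Q$ by adjoining, for each arrow $a\colon i\to j$ of $Q$, a reversed arrow $a^*\colon j\to i$ in degree $-1$, and a loop $t_i$ at each vertex $i$ in degree $-2$; the differential $d$ is determined by $da=0$, $da^* = \partial_a W$ (the cyclic derivative of the potential), and $dt_i = \sum_a [a,a^*]$ restricted to the vertex $i$. This gives a dg algebra $\Gamma$ concentrated in non-positive degrees with $H^0(\Gamma)$ the Jacobi algebra $J(Q,W)$. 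Then I would define $\mathcal{D}(Q,W) := \mathcal{D}_{fd}(\Gamma)$, the full subcategory of the derived category of dg $\Gamma$-modules consisting of modules with finite-dimensional total cohomology. Standard results (Keller, Keller--Yang) show this is a triangulated subcategory.

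Next I would establish the $\mathrm{CY}_3$ property. This follows from the fact that the Ginzburg dg algebra is a $3$-Calabi--Yau dg algebra (it is homologically smooth and carries a canonical bimodule resolution of length $3$ exhibiting the self-duality $\Gamma \simeq \mathrm{RHom}_{\Gamma^e}(\Gamma,\Gamma^e)[3]$), and this Calabi--Yau structure on the algebra induces the $\mathrm{CY}_3$ property on the finite-dimensional derived category $\mathcal{D}_{fd}(\Gamma)$ — again this is a theorem of Keller. The algebraicity of $\mathcal{D}(Q,W)$ in the sense of~\cite{Keller} is automatic since it is by construction a dg-derived category. Then for the heart: the simple $J(Q,W)$-modules $S_1,\dots,S_n$ (one for each vertex, since $Q$ has $n$ vertices and the Jacobi algebra is basic with this many simples) can be viewed as objects of $\mathcal{D}_{fd}(\Gamma)$, and I would define $\mathcal{A} = \mathcal{A}(Q,W)$ to be their extension closure. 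The key input is that $\Gamma$ being non-positively graded gives a natural t-structure on $\mathcal{D}_{fd}(\Gamma)$ whose heart is exactly the category of finite-dimensional $H^0(\Gamma) = J(Q,W)$-modules; this heart is finite-length because a finite-dimensional algebra has artinian and noetherian module category, and its simples are the $S_i$.

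Finally, I would verify that $Q(\mathcal{A}) \cong Q$. This amounts to computing $\Ext^1_{\mathcal{A}}(S_i,S_j) = \Hom^1_{\mathcal{D}}(S_i,S_j)$ and showing its dimension equals the number $\varepsilon_{ij}$ of arrows $i\to j$ in $Q$. The degree-$1$ self-extensions vanish because $Q$ has no loops once we are in the setting where simples are spherical, and the computation of $\Hom^\bullet_{\mathcal{D}}(S_i,S_j)$ reduces to a Koszul-type calculation on the Ginzburg algebra: using the bimodule resolution, $\Hom^\bullet(S_i, S_j)$ is governed by $(\bar Q)_1$ in each degree, and in degree $1$ precisely the arrows $a^*$ of degree $-1$ going $j \to i$ — equivalently the arrows $a\colon i\to j$ of $Q$ — contribute, giving $\dim \Ext^1(S_i,S_j) = \varepsilon_{ij}$. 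I expect the main obstacle to be this last cohomological computation, specifically keeping track of the grading conventions in the Ginzburg algebra and confirming that the $\Ext^1$-quiver recovers $Q$ rather than its opposite or some variant; the degree shifts introduced by $a^*$ (degree $-1$) versus $t_i$ (degree $-2$) must be handled carefully, and one needs the reducedness of $W$ to ensure no spurious $2$-cycles or loops appear. Everything else — the existence of the triangulated structure, the $\mathrm{CY}_3$ and algebraicity properties, and the finite-length property of the heart — is by now a package of established results about Ginzburg dg algebras that I would cite from~\cite{Keller} and the references in~\cite{BridgelandSmith}.
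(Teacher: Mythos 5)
Your proposal follows exactly the construction the paper indicates: the paper offers no proof of its own, merely citing Bridgeland--Smith and Keller--Yang and noting that $\mathcal{D}(Q,W)$ is the subcategory of the derived category of the complete Ginzburg dg algebra consisting of objects with finite-dimensional cohomology. Your sketch (Ginzburg dg algebra, the $\mathrm{CY}_3$ property via Keller--Van den Bergh, the canonical t-structure from the non-positive grading with heart the finite-dimensional $H^0(\Gamma)$-modules, and the Koszul-type computation of the $\Ext^1$-quiver) is the standard argument behind that citation, and your flagged concern about grading/opposite-quiver conventions in the final $\Ext$-computation is indeed the only delicate point.
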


More explicitly, $\mathcal{D}(Q,W)$ is constructed as the subcategory of the derived category of the complete Ginzburg algebra of $(Q,W)$ consisting of objects with finite-dimensional cohomology. We refer to~\cite{KellerYang} for the details. For us, the most important property of this construction is its behavior under mutation.

\begin{theorem}[\cite{KellerYang}, Theorem~3.2, Corollary~5.5]
\label{thm:KellerYang}
Let $(Q,W)$ be a 2-acyclic quiver with potential, and $k$ a vertex of~$Q$. Suppose 
\[
(Q',W')=\mu_k(Q,W)
\]
is the quiver with potential obtained by mutation in the direction~$k$. Then there is a canonical pair of $\Bbbk$-linear triangulated equivalences 
\[
\Phi_\pm:\mathcal{D}(Q',W')\rightarrow\mathcal{D}(Q,W)
\]
which induce tilts in the simple object $S_k\in\mathcal{A}(Q,W)$ in the sense that 
\[
\Phi_\pm(\mathcal{A}(Q',W'))=\mu_{S_k}^\pm(\mathcal{A}(Q,W)).
\]
\end{theorem}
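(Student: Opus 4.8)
The plan is to follow Keller and Yang and argue with the explicit dg model underlying $\mathcal{D}(Q,W)$. Recall that $\mathcal{D}(Q,W)$ is the finite-dimensional derived category of the complete Ginzburg dg algebra $\Gamma=\Gamma(Q,W)$, that $H^0(\Gamma)$ is the Jacobian algebra $J(Q,W)$, and that the canonical heart $\mathcal{A}=\mathcal{A}(Q,W)$ is equivalent to the category of finite-dimensional $J(Q,W)$-modules, with simple objects $S_1,\dots,S_n$ and with the $\Gamma_i=e_i\Gamma$ forming a silting object of $\operatorname{per}\Gamma$. Two formal facts will be used throughout: a finite-length heart in a triangulated category is the extension closure of its finitely many simple objects, so a heart is pinned down by its simples; and, because $\mathcal{D}$ is algebraic in the sense of~\cite{Keller}, the triangulated equivalences and tilts in play can all be realized at the level of dg algebras and dg bimodules.

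First I would construct the functors by silting mutation. Mutating the silting object $\{\Gamma_1,\dots,\Gamma_n\}$ at $\Gamma_k$ yields a new silting object $T_\pm$ of $\operatorname{per}\Gamma$: keep $\Gamma_i$ for $i\neq k$, and replace $\Gamma_k$ by $\operatorname{Cone}\bigl(\bigoplus_{a:\,i\to k}\Gamma_i\to\Gamma_k\bigr)[-1]$ (for $\Phi_+$) or by $\operatorname{Cone}\bigl(\Gamma_k\to\bigoplus_{a:\,k\to j}\Gamma_j\bigr)$ (for $\Phi_-$). One then shows that the dg endomorphism algebra $\operatorname{RHom}_\Gamma(T_\pm,T_\pm)$ is quasi-isomorphic, as a dg algebra, to the Ginzburg dg algebra $\Gamma(Q',W')$ of the mutated quiver with potential. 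Granting this, $\Phi_\pm\coloneqq-\otimes^{\mathbb{L}}_{\Gamma(Q',W')}T_\pm$ is a triangulated equivalence $\operatorname{per}\Gamma(Q',W')\xrightarrow{\sim}\operatorname{per}\Gamma$; since the $T_\pm$ have finite-dimensional total cohomology in each component and the construction is symmetric in $(Q,W)$ and $(Q',W')$, this equivalence restricts to $\mathcal{D}(Q',W')\to\mathcal{D}(Q,W)$, and it is canonical because $T_\pm$ is.

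Next I would identify the image of the heart by computing the images of simples, using that hearts are determined by their simples. Since $S_i'$ is the simple object of $\mathcal{A}(Q',W')$ dual to the $i$th summand of $T_\pm$, one reads off $\Phi_\pm(S_k')=S_k[\mp1]$, while for $i\neq k$ the object $\Phi_\pm(S_i')$ is the universal extension of $S_i$ by copies of $S_k[\mp1]$ prescribed by the arrows between $i$ and $k$ in $Q$. On the other hand $\mu_{S_k}^+(\mathcal{A})=\langle S_k^\perp,S_k[-1]\rangle$ contains $S_k[-1]$ and $\mu_{S_k}^-(\mathcal{A})=\langle S_k[1],{}^{\perp}S_k\rangle$ contains $S_k[1]$, and a direct computation with these torsion-pair definitions shows that the remaining simples of $\mu_{S_k}^\pm(\mathcal{A})$ are exactly the extensions just described. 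Thus $\Phi_\pm(\mathcal{A}(Q',W'))$ and $\mu_{S_k}^\pm(\mathcal{A})$ are finite-length hearts with the same simple objects, hence equal, which is the claim.

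The main obstacle is the identification $\operatorname{RHom}_\Gamma(T_\pm,T_\pm)\simeq\Gamma(Q',W')$ as dg algebras. Even on $H^0$ this is the Derksen--Weyman--Zelevinsky mutation of the Jacobian algebra, and lifting it to the full dg algebra requires keeping track of the degree $-1$ generators $a^*$ and the degree $-2$ loops $t_i$ of the Ginzburg construction and of the way the Ginzburg differential interacts with the mapping cone; one must verify not just that the mutated dg algebra has the right cohomology but that it is again of Ginzburg type for the split, reduced mutated potential. This is the delicate homological bookkeeping carried out in~\cite{KellerYang}; everything else --- silting mutation, the induced derived equivalence, restriction to the finite-dimensional subcategory, and the comparison of simples --- is essentially formal once that identification is in hand.
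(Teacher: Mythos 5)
The paper gives no proof of this statement: it is imported wholesale from Keller--Yang (their Theorem~3.2 and Corollary~5.5), so there is no in-paper argument to compare yours against. Your outline is, in substance, a faithful summary of how Keller and Yang actually argue: mutate the silting object $\{\Gamma_1,\dots,\Gamma_n\}$ of $\operatorname{per}\Gamma$ at $\Gamma_k$, identify the dg endomorphism algebra of the mutated object with $\Gamma(Q',W')$ (this is indeed the technical core, and you are right to isolate it as such), obtain $\Phi_\pm$ as derived tensor functors over the resulting dg bimodules, and then pin down the image of the heart by computing the images of the simples and using that a finite-length heart is the extension closure of its simples. The only caveat-level issue is that which of the two cones corresponds to $\mu^+$ versus $\mu^-$ is a matter of convention that you would need to fix consistently with the paper's definition of left and right tilt.

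There is, however, one step whose stated justification is wrong. You claim the equivalence $\operatorname{per}\Gamma(Q',W')\xrightarrow{\sim}\operatorname{per}\Gamma$ restricts to $\mathcal{D}(Q',W')\rightarrow\mathcal{D}(Q,W)$ ``since the $T_\pm$ have finite-dimensional total cohomology in each component.'' They do not: the components of $T_\pm$ are the objects $\Gamma_i=e_i\Gamma$ (and a cone of maps between such), and $H^*(\Gamma)$ is concentrated in infinitely many negative degrees and is infinite-dimensional in general --- these are the ``projective'' generators of $\operatorname{per}\Gamma$, not objects of the finite-dimensional derived category. The correct reason the functors restrict is that $\mathcal{D}(Q,W)=\mathcal{D}_{fd}(\Gamma)$ is characterized intrinsically inside the ambient derived category (for instance, $X$ lies in $\mathcal{D}_{fd}$ if and only if $\bigoplus_p\Hom(P,X[p])$ is finite-dimensional for every perfect $P$), together with the fact that $-\otimes^{\mathbb{L}}T_\pm$ is an equivalence of ambient derived categories whose bimodule is perfect on both sides, hence preserves this subcategory in both directions. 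With that substitution, and with the dg-level identification $\operatorname{RHom}_\Gamma(T_\pm,T_\pm)\simeq\Gamma(Q',W')$ taken from Keller--Yang as you propose, the sketch is sound.
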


In particular, if $(Q(T),W(T))$ is the quiver with potential associated to an ideal triangulation~$T$, there is an associated category $\mathcal{D}(Q(T),W(T))$, and any two categories constructed from an ideal triangulation in this way are equivalent. Thus we get a $\mathrm{CY}_3$ triangulated category canonically associated to the disk $(\mathbb{D},\mathbb{M})$.

\subsection{Bridgeland stability conditions}

The notion of a stability condition on a triangulated category was introduced by Bridgeland~\cite{Bridgeland07} to formalize ideas about D-branes in topological string theory. Here we define a version of the space of stability conditions which is naturally identified with the space of framed quadratic differentials.

\begin{definition}[\cite{Bridgeland07}, Definition~1.1]
A \emph{stability condition} on a triangulated category $\mathcal{D}$ consists of a group homomorphism $Z:K(\mathcal{D})\rightarrow\mathbb{C}$ called the \emph{central charge} and a full additive subcategory $\mathcal{P}(\phi)\subseteq\mathcal{D}$ for each $\phi\in\mathbb{R}$ satisfying the following axioms:
\begin{enumerate}
\item If $E\in\mathcal{P}(\phi)$, then there exists $m(E)\in\mathbb{R}_{>0}$ such that $Z(E)=m(E)\exp(i\pi\phi)$.

\item If $\phi$ is any real number, then $\mathcal{P}(\phi+1)=\mathcal{P}(\phi)[1]$.

\item If $\phi_1>\phi_2$ and $A_i\in\mathcal{P}(\phi_i)$, then $\Hom_{\mathcal{D}}(A_1,A_2)=0$.

\item For each nonzero object $E\in\mathcal{D}$, there is a sequence of real numbers 
\[
\phi_1>\phi_2>\dots>\phi_k
\]
and a sequence of triangles 
\[
\xymatrix{
0=E_0 \ar[rr] & & E_1 \ar[ld] \ar[rr] & & E_2\ar[ld] \ar[r] & \dots \ar[r] & E_{k-1} \ar[rr] & & \ar[ld] E_k=E \\
& A_1 \ar@{-->}[lu] & & A_2 \ar@{-->}[lu] & & & & A_k \ar@{-->}[lu]
}
\]
such that $A_i\in\mathcal{P}(\phi_i)$ for all $i$.
\end{enumerate}
The homomorphism $Z$ is called the \emph{central charge}, and objects of $\mathcal{P}(\phi)$ are said to be \emph{semistable} of phase~$\phi$.
\end{definition}

In this paper, we will work with a category $\mathcal{D}$ for which the Grothendieck group $K(\mathcal{D})\cong\mathbb{Z}^{\oplus n}$ is free of finite rank, and we will restrict attention to stability conditions $\sigma=(Z,\mathcal{P})$ satisfying the \emph{support property} of~\cite{KS}: For some norm $\|\cdot\|$ on~$K(\mathcal{D})\otimes\mathbb{R}$, there is a constant $C>0$ such that 
\[
\|\gamma\|<C|Z(\gamma)|
\]
for all classes $\gamma\in K(\mathcal{D})$ represented by semistable objects in~$\mathcal{D}$. We write $\Stab(\mathcal{D})$ for the set of such stability conditions on~$\mathcal{D}$. One of the main properties of $\Stab(\mathcal{D})$ is that this set has a natural complex manifold structure.

\begin{theorem}[\cite{Bridgeland07}, Theorem~1.2]
The set $\Stab(\mathcal{D})$ has the structure of a complex manifold such that the forgetful map 
\[
\Stab(\mathcal{D})\rightarrow\Hom_{\mathbb{Z}}(K(\mathcal{D}),\mathbb{C})
\]
taking a stability condition to its central charge is a local isomorphism.
\end{theorem}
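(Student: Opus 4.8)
The plan is to follow Bridgeland's original argument: topologize $\Stab(\mathcal{D})$, produce local charts via the central charge map, and verify that the transition functions are holomorphic. First I would put a topology on $\Stab(\mathcal{D})$ coming from the generalized metric on slicings,
\[
d(\mathcal{P},\mathcal{Q})=\sup_{0\neq E\in\mathcal{D}}\left\{\left|\phi_{\mathcal{P}}^+(E)-\phi_{\mathcal{Q}}^+(E)\right|,\ \left|\phi_{\mathcal{P}}^-(E)-\phi_{\mathcal{Q}}^-(E)\right|\right\}\in[0,\infty],
\]
combined with the sup-distance $\sup_{\gamma}|Z(\gamma)-W(\gamma)|/\|\gamma\|$ between central charges. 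With respect to this topology the forgetful map $\sigma\mapsto Z_\sigma$ into $\Hom_{\mathbb{Z}}(K(\mathcal{D}),\mathbb{C})\cong\mathbb{C}^n$ is visibly continuous, so the content of the theorem is that it is a local homeomorphism with holomorphic transition functions.

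The crux is the \emph{deformation lemma}: given $\sigma=(Z,\mathcal{P})\in\Stab(\mathcal{D})$, there is $\epsilon_0>0$ such that every group homomorphism $W:K(\mathcal{D})\to\mathbb{C}$ sufficiently close to $Z$ underlies a unique stability condition $\tau=(W,\mathcal{Q})$ with $d(\mathcal{P},\mathcal{Q})<\epsilon_0$, and $\tau$ depends continuously on $W$. This is where the support property does the real work. For any class $\gamma$ represented by a $\sigma$-semistable object one has $\|\gamma\|<C|Z(\gamma)|$, so replacing $Z$ by a nearby $W$ moves the phase of $\gamma$ by an amount that is uniformly $O(\|W-Z\|)$ over the (a priori infinite) set of semistable classes; the same bound shows $\tau$ again satisfies the support property, with a slightly larger constant. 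One then assembles $\mathcal{Q}$ from $\mathcal{P}$: roughly, a $\sigma$-semistable object remains $\tau$-semistable, with a nearby phase, unless its $W$-phase has crossed that of an adjacent Harder--Narasimhan factor, in which case consecutive factors must be grouped together; because all such crossings are confined to windows of size $O(\|W-Z\|)$, the Harder--Narasimhan property for $\tau$ follows from that for $\sigma$. Uniqueness is then immediate, since two slicings within distance $\epsilon_0$ compatible with the same central charge must agree. Thus each $\sigma$ has an open neighbourhood $B_{\epsilon_0}(\sigma)$ on which the central charge map restricts to a homeomorphism onto an open subset of $\Hom_{\mathbb{Z}}(K(\mathcal{D}),\mathbb{C})$; these are the charts.

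Finally I would check chart compatibility, which is almost automatic: on an overlap both charts are literally ``read off the central charge,'' so the transition map is the restriction of the identity map of the complex vector space $\Hom_{\mathbb{Z}}(K(\mathcal{D}),\mathbb{C})$ to an open set, hence biholomorphic. This exhibits $\Stab(\mathcal{D})$ as a complex manifold for which the forgetful map is a local biholomorphism onto its image. The main obstacle throughout is the deformation lemma --- specifically, showing that the perturbed data $(W,\mathcal{Q})$ is a genuine stability condition (both the Harder--Narasimhan property and the support property survive), rather than merely a pre-stability condition; once that is in hand, the remaining points (Hausdorffness of the charts, continuity of the metric, holomorphy of the transitions) are routine.
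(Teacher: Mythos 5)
The paper does not prove this statement itself but simply cites it from Bridgeland's original article, and your sketch is a faithful outline of that proof: the generalized metric on slicings, the deformation lemma driven by the uniform phase-displacement bound that the support property provides (the streamlined form due to Kontsevich--Soibelman and Bayer), and the observation that the transition maps are restrictions of the identity on $\Hom_{\mathbb{Z}}(K(\mathcal{D}),\mathbb{C})$, so the complex structure is immediate once the local homeomorphism is established. This matches the intended argument; no gaps to flag.
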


Any stability condition $\sigma=(Z,\mathcal{P})$ on~$\mathcal{D}$ has an associated heart 
\[
\mathcal{A}=\mathcal{P}((0,1])\subseteq\mathcal{D}
\]
defined as the extension closure of the subcategories $\mathcal{P}(\phi)$ for $0<\phi\leq1$. The central charge $Z$ maps all nonzero objects of~$\mathcal{A}$ to elements of the semi-closed upper half plane 
\[
\mathcal{H}=\{r\exp(i\pi\phi):r>0 \text{ and } 0<\phi\leq1\}\subseteq\mathbb{C}.
\]
Moreover, given a heart $\mathcal{A}\subseteq\mathcal{D}$ and a group homomorphism $Z:K(\mathcal{A})\rightarrow\mathbb{C}$ that maps into~$\mathcal{H}$, then assuming certain finiteness conditions, there is a unique stability condition on~$\mathcal{D}$ with central charge $Z$ such that the associated heart is~$\mathcal{A}$. In particular, if $\mathcal{A}$ is a finite length heart with simple objects $S_1,\dots,S_n$ up to isomorphism, then the subset $\Stab(\mathcal{A})\subseteq\Stab(\mathcal{D})$ consisting of stability conditions with associated heart~$\mathcal{A}$ is homeomorphic to 
\[
\mathcal{H}^n=\{Z\in\Hom_{\mathbb{Z}}(K(\mathcal{D}),\mathbb{C}):Z(S_i)\in\mathcal{H}\}.
\]
The following result explains how the chambers $\Stab(\mathcal{A})$ are glued together in $\Stab(\mathcal{D})$.

\begin{proposition}[\cite{BridgelandSmith}, Lemma~7.9]
Let $\mathcal{A}\subseteq\mathcal{D}$ be a finite length heart, and suppose $\sigma=(Z,\mathcal{P})\in\Stab\mathcal{D}$ lies on the boundary component of $\Stab(\mathcal{A})$ where $\Im(Z(S_i))=0$ for some chosen index~$i$. Assume that the tilted hearts $\mu_{S_i}^{\pm}(\mathcal{A})$ are also finite length. Then there is a neighborhood $\sigma\in U\subseteq\Stab(\mathcal{D})$ such that one of the following holds:
\begin{enumerate}
\item $Z(S_i)\in\mathbb{R}_{<0}$ and $U\subseteq\Stab(\mathcal{A})\cup\Stab(\mu_{S_i}^+(\mathcal{A}))$.
\item $Z(S_i)\in\mathbb{R}_{>0}$ and $U\subseteq\Stab(\mathcal{A})\cup\Stab(\mu_{S_i}^-(\mathcal{A}))$.
\end{enumerate}
\end{proposition}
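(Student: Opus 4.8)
The plan is to compute the heart of every stability condition in a sufficiently small neighborhood of $\sigma$, using Bridgeland's theorem that the forgetful map $\Stab(\mathcal{D})\to\Hom_{\mathbb{Z}}(K(\mathcal{D}),\mathbb{C})$ is a local isomorphism. Since $[S_1],\dots,[S_n]$ is a $\mathbb{Z}$-basis of $K(\mathcal{D})$, we identify $\Hom_{\mathbb{Z}}(K(\mathcal{D}),\mathbb{C})$ with $\mathbb{C}^n$ via $W\mapsto(W(S_1),\dots,W(S_n))$. Choose a connected open neighborhood $U\ni\sigma$ on which $\tau\mapsto Z_\tau$ is a homeomorphism onto an open ball $B$ around $Z$, and shrink $B$ so that for all $W\in B$ we have $\Im W(S_j)>0$ for $j\neq i$ while $\Re W(S_i)$ has the same sign as $\Re Z(S_i)\neq 0$. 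I treat case~1, where $Z(S_i)\in\mathbb{R}_{<0}$; here $\sigma$ itself lies in $\Stab(\mathcal{A})$, since $Z(S_i)\in\mathbb{R}_{<0}\subseteq\mathcal{H}$. Case~2 is symmetric, and can also be obtained from case~1 applied to the heart $\mu_{S_i}^-(\mathcal{A})$ of $\sigma$ and its simple object $S_i[1]$, using that $\mu_{S_i[1]}^+(\mu_{S_i}^-(\mathcal{A}))=\mathcal{A}$.

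The first step is to show that every $\tau\in U$ with $\Im Z_\tau(S_i)\geq 0$ lies in $\Stab(\mathcal{A})$. For such $\tau$, all of $Z_\tau(S_1),\dots,Z_\tau(S_n)$ lie in $\mathcal{H}$: for $j\neq i$ they are in the open upper half plane, and $Z_\tau(S_i)$ is there or on the negative real axis because $\Re Z_\tau(S_i)<0$ on $U$. Hence the existence theorem for stability conditions with prescribed finite-length heart (recalled above) produces a stability condition $\tau'$ with heart $\mathcal{A}$ and central charge $Z_\tau$. Since $\Stab(\mathcal{A})$ is identified with $\mathcal{H}^n$ by the central charges of the simples of $\mathcal{A}$, the inverse assignment $Z_\tau\mapsto\tau'$ is continuous, and its value at $Z_\tau=Z$ is the stability condition with heart $\mathcal{A}$ and central charge $Z$, namely $\sigma$. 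Thus, after shrinking $U$, we have $\tau'\in U$, and injectivity of $\tau\mapsto Z_\tau$ on $U$ forces $\tau=\tau'\in\Stab(\mathcal{A})$. In particular, the whole locus $\{\tau\in U:\Im Z_\tau(S_i)=0\}$ lies in $\Stab(\mathcal{A})$.

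The second step treats $\tau\in U$ with $\Im Z_\tau(S_i)<0$, and shows $\tau\in\Stab(\mu_{S_i}^+(\mathcal{A}))$. By the standard description of simple tilting, the simple objects of $\mu_{S_i}^+(\mathcal{A})=\langle S_i^\perp,S_i[-1]\rangle$ have classes $-[S_i]$ and $[S_j]+m_j[S_i]$ for $j\neq i$, where $m_j=\dim_{\Bbbk}\Ext^1_{\mathcal{A}}(S_i,S_j)\geq 0$. Now $-Z_\tau(S_i)$ lies in the open upper half plane, while $\Im\big(Z_\tau(S_j)+m_j Z_\tau(S_i)\big)=\Im Z_\tau(S_j)+m_j\Im Z_\tau(S_i)$ is positive once $U$ is small: the integers $m_j$ are fixed, $\Im Z_\tau(S_j)$ stays bounded below by a positive constant on $U$, and $|\Im Z_\tau(S_i)|$ can be made arbitrarily small. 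Hence $Z_\tau$ carries all simples of the finite-length heart $\mu_{S_i}^+(\mathcal{A})$ into $\mathcal{H}$, so the existence theorem provides a stability condition $\tau''$ with heart $\mu_{S_i}^+(\mathcal{A})$ and central charge $Z_\tau$; once $\tau''\in U$, injectivity gives $\tau=\tau''\in\Stab(\mu_{S_i}^+(\mathcal{A}))$. Combining the two steps, $U\subseteq\Stab(\mathcal{A})\cup\Stab(\mu_{S_i}^+(\mathcal{A}))$, which is exactly the first alternative in the statement.

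The main obstacle is the last point, that $\tau''\in U$ --- equivalently, that $\Stab(\mathcal{A})$ and $\Stab(\mu_{S_i}^+(\mathcal{A}))$ are glued along their common boundary wall near $\sigma$. In contrast with the first step, $\sigma$ lies only on the \emph{boundary} of $\Stab(\mu_{S_i}^+(\mathcal{A}))$: under the identification $\Stab(\mu_{S_i}^+(\mathcal{A}))\cong\mathcal{H}^n$, the central charge of $\sigma$ sends the new simple $S_i[-1]$ to the positive real axis, which is excluded from $\mathcal{H}$. So one must show that, as the central charges approach $Z$ through the region $\Im W(S_i)<0$, the corresponding stability conditions $\tau''$ converge to $\sigma$ in $\Stab(\mathcal{D})$. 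This is where the precise relation between the hearts is used --- $S_i\in\mathcal{A}$ becomes $S_i[-1]\in\mu_{S_i}^+(\mathcal{A})$, so ``phase tending to $1$'' from the side of $\mathcal{A}$ agrees with ``phase tending to $0$'' from the side of $\mu_{S_i}^+(\mathcal{A})$ --- and it is precisely here that the hypothesis that $\mu_{S_i}^{\pm}(\mathcal{A})$ are finite length enters, ensuring that $\Stab(\mu_{S_i}^{\pm}(\mathcal{A}))$ is defined and is a copy of $\mathcal{H}^n$. Granting this gluing statement, a final shrinking of $U$ completes the proof.
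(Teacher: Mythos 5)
The paper itself does not prove this proposition --- it is quoted from Bridgeland--Smith (Lemma~7.9) --- so your attempt can only be judged on its own terms. Your overall architecture (work through the forgetful map to $\Hom_{\mathbb{Z}}(K(\mathcal{D}),\mathbb{C})$, use the identification $\Stab(\mathcal{B})\cong\mathcal{H}^n$ for a finite-length heart $\mathcal{B}$, and check that $Z_\tau$ sends the simples of $\mathcal{A}$, resp.\ of $\mu_{S_i}^+(\mathcal{A})$, into $\mathcal{H}$ according to the sign of $\Im Z_\tau(S_i)$) is the right skeleton, and your first step is essentially complete. But the point you defer at the end --- that the stability conditions $\tau''$ with heart $\mu_{S_i}^+(\mathcal{A})$ and central charge tending to $Z$ actually converge to $\sigma$, so that $\tau''\in U$ and injectivity applies --- is not a technicality to be ``granted''; it is the entire content of the lemma, and your argument as written cannot supply it.

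Here is a concrete way to see that the central-charge bookkeeping alone is insufficient. The simple objects of the \emph{other} tilt $\mu_{S_i}^-(\mathcal{A})=\langle S_i[1],{}^\perp S_i\rangle$ have classes $-[S_i]$ and $[S_j]+m_j'[S_i]$ with $m_j'\geq 0$ as well (they are universal extensions of the $S_j$ by copies of $S_i$). So for $\tau\in U$ with $\Im Z_\tau(S_i)<0$, your step~2 produces with equal ease a stability condition $\tau'''$ with heart $\mu_{S_i}^-(\mathcal{A})$ and the same central charge $Z_\tau$. Both $\tau''$ and $\tau'''$ exist and are distinct (they differ by the spherical twist $\Tw_{S_i}$, reflecting the non-Hausdorff gluing of chambers), and only one of them equals $\tau$; deciding which one requires an argument about slicings, not central charges. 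The missing step is to show that $\sigma$ lies in the closure of $\Stab(\mu_{S_i}^+(\mathcal{A}))$ and that the section $W\mapsto\tau''(W)$ extends continuously to $W=Z$ with value $\sigma$. This can be done, e.g., by observing that $\mathcal{P}(1)=\langle S_i\rangle$ (here one needs that $S_i$ is the \emph{only} simple with $\Im Z(S_i)=0$, a genericity assumption on the boundary point that your write-up should make explicit), so that $\mathcal{P}([0,1))=\langle S_i^\perp,S_i[-1]\rangle=\mu_{S_i}^+(\mathcal{A})$ is also a heart adapted to $\sigma$, and then invoking Bridgeland's deformation theorem to identify the unique lift of a central charge $W$ with $\Im W(S_i)<0$ near $\sigma$ as the one supported on this shifted heart; alternatively one bounds the distance $d(\tau'',\sigma)$ in the standard metric on $\Stab(\mathcal{D})$. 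Without some such argument the proof is incomplete.
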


Typically, we are interested in the quotient of $\Stab(\mathcal{D})$ by certain group actions. Note that the group $\Aut(\mathcal{D})$ of triangulated autoequivalences of~$\mathcal{D}$ acts on the space $\Stab(\mathcal{D})$: For any $\Phi\in\Aut(\mathcal{D})$ and $(Z,\mathcal{P})\in\Stab(\mathcal{D})$, we set $\Phi\cdot(Z,\mathcal{P})=(Z',\mathcal{P}')$ where 
\[
Z'(E)=Z(\Phi^{-1}(E))
\]
for all $E\in\mathcal{D}$ and $\mathcal{P}'=\Phi(\mathcal{P})$. It follows that any subgroup of $\Aut(\mathcal{D})$ acts on $\Stab(\mathcal{D})$. In addition to the action of autoequivalences of~$\mathcal{D}$, the space $\Stab(\mathcal{D})$ carries a natural $\mathbb{C}$-action. For any $z\in\mathbb{C}$ and $(Z,\mathcal{P})\in\Stab(\mathcal{D})$, we set $z\cdot(Z,\mathcal{P})=(Z',\mathcal{P}')$ where 
\[
Z'(E)=e^{-i\pi z}Z(E)
\]
for all $E\in\mathcal{D}$ and 
\[
\mathcal{P}'(\phi)=\mathcal{P}(\phi+\Re(z))
\]
for $\phi\in\mathbb{R}$. It is easy to see that this action commutes with the action of $\Aut(\mathcal{D})$.

Now let us specialize to the case where $(Q,W)$ is a quiver of type $A_n$ equipped with the zero potential and $\mathcal{D}=\mathcal{D}(Q,W)$ is the associated $\text{CY}_3$ triangulated category. Let $\mathcal{A}$ denote the distinguished heart of this category, and let $\Stab^\circ(\mathcal{D})$ denote the connected component of $\Stab(\mathcal{D})$ that contains $\Stab(\mathcal{A})$. There is an action of the group $\Sph_{\mathcal{A}}(\mathcal{D})$ on the space $\Stab(\mathcal{D})$. This action preserves the distinguished component, and so we can define 
\[
\Sigma(A_n)\coloneqq\Stab^\circ(\mathcal{D})/\Sph_\mathcal{A}(\mathcal{D}).
\]

Recall that there is a $\mathbb{C}^*$-action on the space of framed differentials where $t\in\mathbb{C}^*$ acts on a framed differential $(\phi,f_0)$ by modifying the coefficients of the polynomial defining $\phi$ and leaving the framing $f_0$ unchanged. Using this $\mathbb{C}^*$-action, we get an associated $\mathbb{C}$-action on this space, and we have the following result by~\cite{Ikeda} (see also~\cite{BridgelandSmith}).

\begin{theorem}
\label{thm:identifyquadstab}
There is a $\mathbb{C}$-equivariant biholomorphism 
\[
\Quad_{fr}(\mathbb{D},\mathbb{M})\stackrel{\sim}{\longrightarrow}\Sigma(A_n)
\]
where an element $z\in\mathbb{C}$ acts on the left hand side by taking a point $(\phi,f)\in\Quad_{fr}(\mathbb{D},\mathbb{M})$ to $e^{-2\pi i z/(n+3)}\cdot(\phi,f)$. Under this map, the set of all framed differentials that determine a given WKB triangulation $T$ is mapped isomorphically to a chamber in $\Sigma(A_n)$ consisting of stability conditions whose heart determines the quiver~$Q_T$.
\end{theorem}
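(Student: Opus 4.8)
The plan is to adapt the period-map strategy of Bridgeland and Smith~\cite{BridgelandSmith}, with the single-pole-on-$\mathbb{CP}^1$ geometry handled as in Ikeda~\cite{Ikeda}. The map is built from the spectral cover $\pi\colon\Sigma_\phi\to\mathbb{CP}^1$ of Section~\ref{sec:QuadraticDifferentials}: one forms the rank-$n$ lattice $\widehat H_1(\Sigma_\phi)$ of anti-invariant (relative) homology classes, as in~\cite{BridgelandSmith}, on which the tautological $1$-form $\sqrt{\phi}$ defines a linear functional $\gamma\mapsto\tfrac{1}{\pi}\int_\gamma\sqrt{\phi}$. On the categorical side, $\mathcal D=\mathcal D(A_n)$ has $K(\mathcal D)\cong\mathbb Z^{\oplus n}$, and the heart of the matter is a canonical isomorphism $\widehat H_1(\Sigma_\phi)\cong K(\mathcal D)$, depending on a choice of chamber, under which that functional becomes the central charge of a stability condition whose heart has a prescribed quiver. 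I would set this up chamber by chamber and then glue.

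Fix a saddle-free $\phi$ with WKB triangulation $T$. By Proposition~\ref{prop:horizontalstripdecomposition}, deleting the separating trajectories cuts $\mathbb{CP}^1$ into half planes and exactly $n$ horizontal strips. Each strip carries a distinguished \emph{hat-cycle}, the lift to $\Sigma_\phi$ of a loop crossing the strip once, and these $n$ cycles form a basis of $\widehat H_1(\Sigma_\phi)$ whose intersection pairing recovers the quiver $Q(T)$. Matching strips with the arcs of $T$, and using Labardini-Fragoso's theorem~\cite{L} together with Keller--Yang's Theorem~\ref{thm:KellerYang}, I would identify this basis with the simple classes of the heart $\mathcal A_T$ obtained from the distinguished heart by the chain of simple tilts mirroring the chain of flips relating $T$ to the WKB triangulation of the distinguished heart. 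A standard WKB estimate shows that the periods $\tfrac{1}{\pi}\int_{\gamma_i}\sqrt{\phi}$ all lie in the semi-closed upper half plane $\mathcal H$; so, via the homeomorphism $\Stab(\mathcal A_T)\cong\mathcal H^n$ recalled in Section~\ref{sec:StabilityConditionsAndTheClusterVariety}, the period map restricts to a bijection from the locus $\Quad_{fr}^T$ of framed differentials inducing $T$ onto $\Stab(\mathcal A_T)$. It is a biholomorphism: the central-charge map is a local isomorphism by~\cite{Bridgeland07}, Theorem~1.2, while on the other side the period map $(a_0,\dots,a_{n-1})\mapsto\bigl(\int_\gamma\sqrt{P}\,dz\bigr)_\gamma$ is a local isomorphism because its derivative sends $(\dot a_j)$ to the periods of $\tfrac{1}{2}\bigl(\sum_j\dot a_j z^j\bigr)\,dz/\sqrt P$, and the differentials $z^j\,dz/\sqrt P$ for $0\le j\le n-1$ are linearly independent modulo exact forms (infinitesimal Torelli for the spectral curve).

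It remains to glue the chamber isomorphisms across walls. A wall of $\Quad_{fr}(\mathbb D,\mathbb M)$ is a codimension-one locus with exactly one saddle trajectory, and crossing it changes the WKB triangulation by a flip; a wall of $\Stab^\circ(\mathcal D)$ is described by~\cite{BridgelandSmith}, Lemma~7.9, in terms of the tilts $\mu^\pm_{S_i}$; and the flip--mutation--tilt dictionary (again~\cite{L} and Theorem~\ref{thm:KellerYang}) makes these match. Since $\Tw_{S_i}$ exchanges $\mu^-_{S_i}(\mathcal A)$ with $\mu^+_{S_i}(\mathcal A)$, a loop in $\Quad_{fr}$ returning to its starting WKB triangulation lifts to a nontrivial element of $\Sph_{\mathcal A}(\mathcal D)$ acting on the resulting tower of hearts; this is exactly why the target must be the quotient $\Sigma(A_n)=\Stab^\circ(\mathcal D)/\Sph_{\mathcal A}(\mathcal D)$ and not $\Stab^\circ(\mathcal D)$ itself. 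Assembling the chamber maps then produces a local biholomorphism $\Quad_{fr}(\mathbb D,\mathbb M)\to\Sigma(A_n)$ sending $\Quad_{fr}^T$ onto the chamber with heart $\mathcal A_T$, whose quiver is $Q_T$: this is the final assertion. Injectivity follows because a differential is determined by its WKB triangulation together with its periods, using connectedness of $\mathbb C^n\setminus\Delta$; surjectivity is the real content, amounting to the statement that the chambers $\Stab(\mathcal B)$ obtained by tilting from $\mathcal A$, together with their shared walls, exhaust the component $\Stab^\circ(\mathcal D)$ (equivalently, that the image is open and closed) --- the covering-space argument of~\cite{BridgelandSmith}, specialized to the present geometry in~\cite{Ikeda}, with $n=1$ a direct check. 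The $\mathbb C$-equivariance is then a scaling computation: $z\in\mathbb C$ acts on $\Quad_{fr}$ through $t=e^{-2\pi iz/(n+3)}$, which rescales the roots of $P$ by $t$, hence rescales $\phi$ (after the coordinate change normalizing it) by $t^{n+3}$ and every period by $t^{(n+3)/2}=e^{-i\pi z}$, matching the action $Z\mapsto e^{-i\pi z}Z$ on central charges.

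The main obstacle is the surjectivity onto $\Stab^\circ(\mathcal D)$: one must show that the hearts reached from the distinguished heart by finite tilting sequences, together with the walls between their spaces of stability conditions, cover the whole connected component. This requires ruling out stability conditions whose hearts are not of finite length (or lie outside this tilting orbit), plus an openness/closedness argument at the frontier of the cover --- the technical core of~\cite{BridgelandSmith} --- and, specific to this paper, confirming that a sphere with a single pole satisfies the hypotheses needed there, which is the point of invoking~\cite{Ikeda}.
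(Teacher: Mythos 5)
Your proposal is correct in substance, but it is worth saying how it relates to what the paper actually does: the paper's proof of Theorem~\ref{thm:identifyquadstab} is essentially a two-line citation --- it invokes Theorem~1.1 of~\cite{Ikeda} for the isomorphism $\mathbb{C}^n\setminus\Delta\cong\Sigma(A_n)$, composes with Proposition~\ref{prop:identificationquad}, and notes that Ikeda's results also give the $\mathbb{C}$-equivariance. What you have written is a sketch of the \emph{content} of that citation, i.e.\ the Bridgeland--Smith period-map argument specialized to the one-pole sphere as in~\cite{Ikeda}: the hat-homology basis from the $n$ horizontal strips, the chamber-by-chamber identification $\Quad_{fr}^T\cong\Stab(\mathcal A_T)$, the local Torelli statement via the independence of $z^j\,dz/\sqrt P$, the flip--mutation--tilt matching of walls, and the covering-space/surjectivity argument. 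So the two are the same route; yours simply refuses the black box. Two small points. First, the fact that the periods of the standard classes lie in $\mathcal H$ is not a WKB estimate: the hat-cycle of a horizontal strip mapped to $\{a<\Im(w)<b\}$ by the distinguished coordinate has period $\pm i(b-a)$ plus a real part, so (with the right orientation) it lies in the open upper half plane directly from the strip geometry. Second, your one-sentence injectivity claim (``a differential is determined by its WKB triangulation together with its periods'') is really part of the same covering-space argument you defer to for surjectivity --- in the quotient $\Sigma(A_n)$ one must check that distinct triangulations give distinct chambers modulo $\Sph_{\mathcal A}(\mathcal D)$ --- so it should not be presented as independent of that machinery. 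Your identification of surjectivity as the technical core, and of the spherical-twist quotient as forced by monodromy of the tower of hearts around loops in $\Quad_{fr}$, is exactly right and is precisely what~\cite{Ikeda} supplies; your equivariance computation ($t^{(n+3)/2}=e^{-i\pi z}$) is also correct.
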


\begin{proof}
It follows from Theorem~1.1 of~\cite{Ikeda} that there is an isomorphism $\mathbb{C}^n\setminus\Delta\cong\Sigma(A_n)$. Combining this with the isomorphism $\Quad_{fr}(\mathbb{D},\mathbb{M})\cong\mathbb{C}^n\setminus\Delta$ of Proposition~\ref{prop:identificationquad} gives the desired isomorphism. The results of~\cite{Ikeda} say that this map is equivariant with respect to the two $\mathbb{C}$-actions defined above.
\end{proof}

\subsection{The cluster Poisson variety}

Finally, let us define the cluster Poisson variety. Suppose $(Q_0,W_0)$ is a nondegenerate quiver with potential, and denote by $J$ the set of vertices of~$Q_0$. Let $\mathbb{T}_n$ denote an $n$-regular tree where $n=|J|$. We can label the edges of $\mathbb{T}_n$ by elements of $J$ in such a way that the $n$ edges emanating from any vertex have distinct labels. Choose a vertex $v_0$ of $\mathbb{T}_n$ and associate the quiver $Q_0$ to this vertex. We associate quivers to the remaining vertices in such a way that if two vertices are connected by an edge labeled $k$, then the quivers associated to these vertices are related by a mutation in the direction~$k$.

\begin{definition}
We say that two quivers $Q$ and $Q'$ are \emph{mutation equivalent} if $Q'$ is obtained from $Q$ by applying some sequence of mutations.
\end{definition}

Thus the quivers associated to vertices of $\mathbb{T}_n$ are all of the quivers mutation equivalent to~$Q_0$. For any quiver $Q$ which is mutation equivalent to~$Q_0$, let us consider the algebraic torus 
\[
\mathcal{X}_Q=(\mathbb{C}^*)^{J}.
\]
This torus carries a natural Poisson structure given by 
\[
\{X_i,X_j\}=\varepsilon_{ij}X_iX_j
\]
where $X_k$~($k\in J$) are the natural coordinates on~$\mathcal{X}_Q$ and 
\[
\varepsilon_{ij}=|\{\text{arrows from $i$ to $j$ in $Q$}\}| - |\{\text{arrows from $j$ to $i$ in $Q$}\}|.
\]
For each $k\in J$, there is a birational map $\mathcal{X}_Q\dashrightarrow\mathcal{X}_{\mu_k(Q)}$ preserving the Poisson structures. Abusing notation, we denote it by~$\mu_k$. If $X_j'$~($j\in J$) are the natural coordinates on $\mathcal{X}_{\mu_k(Q)}$, then $\mu_k$ is defined by the formula 
\[
\mu_k^*(X_j') = 
\begin{cases}
X_k^{-1} & \text{if $j=k$} \\
X_j{(1+X_k^{-\sgn(\varepsilon_{jk})})}^{-\varepsilon_{jk}} & \text{if $j\neq k$}.
\end{cases}
\]
If $v$ and $v'$ are any vertices of $\mathbb{T}_n$, there is a unique simple path from $v$ to $v'$. There are quivers $Q$ and $Q'$ associated to $v$ and $v'$, respectively. By composing the maps $\mu_k^*$ in order along the path connecting $v$ and $v'$, we obtain a birational map $\mathcal{X}_Q\dashrightarrow\mathcal{X}_{Q'}$. 

\begin{lemma}[\cite{GHK}, Proposition~2.4]
Let $\{Z_i\}$ be a collection of integral separated schemes of finite type over~$\mathbb{C}$ and suppose we have birational maps $f_{ij}:Z_i\dashrightarrow Z_j$ for all $i$, $j$ such that $f_{ii}$ is the identity and $f_{jk}\circ f_{ij}=f_{ik}$ as rational maps. Let $U_{ij}$ be the largest open subset of $Z_i$ such that $f_{ij}:U_{ij}\rightarrow f_{ij}(U_{ij})$ is an isomorphism. Then there is a scheme obtained by gluing the $Z_i$ along the open sets $U_{ij}$ using the maps $f_{ij}$.
\end{lemma}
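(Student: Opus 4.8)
The plan is to reduce the statement to the standard lemma on gluing schemes along open subschemes, which requires producing isomorphisms $\varphi_{ij}\colon U_{ij}\xrightarrow{\sim} U_{ji}$ satisfying the usual cocycle conditions on overlaps. The entire content is thus to check that the maximal loci $U_{ij}$ and the birational maps $f_{ij}$ assemble into such data; the work is bookkeeping with domains of definition of rational maps, upgrading identities ``as rational maps'' to identities of morphisms on prescribed open sets, using that each $Z_i$ is reduced and separated.

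First I would record that each $f_{ij}$, being birational, is dominant, so the compositions appearing in the hypothesis are well defined as rational maps, and that on $U_{ij}$ the map $f_{ij}$ is by definition an open immersion. Using $f_{ji}\circ f_{ij}=f_{ii}=\mathrm{id}$ together with the maximality defining $U_{ij}$ and $U_{ji}$, I would show $f_{ij}(U_{ij})=U_{ji}$ and that $f_{ij}\colon U_{ij}\to U_{ji}$ is an isomorphism with inverse $f_{ji}|_{U_{ji}}$. Indeed $f_{ij}(U_{ij})$ is an open set on which $f_{ji}$ restricts to an isomorphism onto $U_{ij}$, so $f_{ij}(U_{ij})\subseteq U_{ji}$ by maximality; the reverse inclusion follows by the symmetric argument after composing with $f_{ij}$ and using that a morphism agreeing with the identity on a dense open of the reduced separated scheme $U_{ji}$ is the identity. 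Set $\varphi_{ij}=f_{ij}|_{U_{ij}}$; then $\varphi_{ii}=\mathrm{id}$ and $\varphi_{ji}=\varphi_{ij}^{-1}$.

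Next I would establish the triple-overlap compatibility. Writing $W=U_{ij}\cap U_{ik}$, on $W$ both $f_{ij}$ and $f_{ik}$ are morphisms and $\varphi_{ij}|_W$ is an open immersion; since $f_{jk}\circ f_{ij}=f_{ik}$ as rational maps, the rational map $f_{jk}$ must agree, after pullback along $\varphi_{ij}|_W$, with the morphism $f_{ik}|_W$, hence $f_{jk}$ is defined as a morphism on the open set $f_{ij}(W)$ and equals $f_{ik}|_W\circ(\varphi_{ij}|_W)^{-1}$, a composite of open immersions, hence an isomorphism onto its image. By maximality of $U_{jk}$ this gives $f_{ij}(W)\subseteq U_{jk}$, and together with $f_{ij}(W)\subseteq U_{ji}$ we obtain $\varphi_{ij}(U_{ij}\cap U_{ik})\subseteq U_{ji}\cap U_{jk}$; the reverse inclusion follows by running the same argument with $i$ and $j$ interchanged and pushing forward by $\varphi_{ij}$. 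The same identity shows $\varphi_{jk}\circ\varphi_{ij}=\varphi_{ik}$ on $U_{ij}\cap U_{ik}$, since both sides are morphisms there agreeing with the rational map $f_{ik}$ on a dense open and $Z_k$ is separated.

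Finally, with the $\varphi_{ij}$ verified to be isomorphisms between open subschemes satisfying $\varphi_{ii}=\mathrm{id}$, $\varphi_{ij}(U_{ij}\cap U_{ik})=U_{ji}\cap U_{jk}$, and $\varphi_{jk}\circ\varphi_{ij}=\varphi_{ik}$ on triple overlaps, I would invoke the standard gluing construction for locally ringed spaces, which here produces a scheme since a locally ringed space glued from schemes along open subschemes is automatically a scheme, yielding $X$ with open immersions $Z_i\hookrightarrow X$ realizing the $f_{ij}$ as transition maps. I expect the only real obstacle to be the argument of the previous paragraph, namely promoting ``equal as rational maps'' together with maximality of the $U_\bullet$ to the honest scheme-theoretic equalities $\varphi_{ij}(U_{ij}\cap U_{ik})=U_{ji}\cap U_{jk}$ and $\varphi_{jk}\circ\varphi_{ij}=\varphi_{ik}$; everything else is formal, and in particular the (possibly infinite) size of the index set plays no role.
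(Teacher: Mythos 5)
Your argument is correct, and it coincides with the standard verification: the paper itself gives no proof of this lemma, merely citing \cite{GHK}, Proposition~2.4, and the proof there proceeds exactly as you outline, by upgrading the rational-map identities $f_{ji}\circ f_{ij}=\mathrm{id}$ and $f_{jk}\circ f_{ij}=f_{ik}$, via maximality of the $U_{ij}$ and the fact that morphisms from a reduced scheme to a separated scheme agreeing on a dense open coincide, into the open-gluing data $\varphi_{ij}(U_{ij}\cap U_{ik})=U_{ji}\cap U_{jk}$ and the cocycle condition. The only point worth making explicit is why the ``largest'' $U_{ij}$ exists at all (injectivity of $f_{ij}$ on the union of all opens where it is an open immersion, which again uses $f_{ji}\circ f_{ij}=\mathrm{id}$), but since the statement presupposes $U_{ij}$, taking it as given is legitimate.
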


Using this lemma, we can glue the tori $\mathcal{X}_Q$ to get a scheme.

\begin{definition}
Let $(Q_0,W_0)$ be a nondegenerate quiver with potential. Then the \emph{cluster Poisson variety} associated to $(Q_0,W_0)$ is the scheme obtained by gluing the tori $\mathcal{X}_Q$ for every quiver $Q$ mutation equivalent to~$Q_0$ using the above birational maps.
\end{definition}

Note that, despite the terminology, the cluster Poisson variety is not a variety in general but rather a nonseparated scheme.

As a special case of the above construction, we can take $Q_0$ to be a quiver of type $A_n$, and we get a cluster variety which we denote $\mathcal{X}(A_n)$. The quivers that are mutation equivalent to~$Q_0$ are precisely the quivers associated to ideal triangulations of the disk. In this case, the transformation used to glue to tori $\mathcal{X}_Q$ coincides with the transformation in Proposition~\ref{prop:transformcoordinates}. Thus we have an isomorphism $\mathcal{X}^*(\mathbb{D},\mathbb{M})\cong\mathcal{X}(A_n)$. Combining this with the identification $\Quad_{fr}(\mathbb{D},\mathbb{M})\cong\Sigma(A_n)$, we can reinterpret the map $H$ as a local biholomorphism 
\[
F:\Sigma(A_n)\rightarrow\mathcal{X}(A_n).
\]
For each $\hbar\in\mathbb{H}(\epsilon)$, one defines the associated map $F_\hbar$ using the $\mathbb{C}$-action on~$\Sigma(A_n)$, and Theorem~\ref{thm:introchambertotorus} follows from Theorems~\ref{thm:chambertotorus} and~\ref{thm:identifyquadstab}.

\section*{Acknowledgments}
\addcontentsline{toc}{section}{Acknowledgements}

The main result of this paper is a special case of joint work with Tom~Bridgeland, who contributed many important ideas. I am grateful to Kohei~Iwaki, Andrew~Neitzke, and Yoshitsugu~Takei for helpful discussions about exact WKB analysis.


\begin{thebibliography}{99}
\addcontentsline{toc}{section}{References}

\bibitem{AllegrettiBridgeland} Allegretti, D.G.L. and Bridgeland, T. To appear.

\bibitem{Bakken} Bakken, I. (1977). A multiparameter eigenvalue problem in the complex plane. \emph{American Journal of Mathematics}, \textbf{99}(5), 1015--1044.

\bibitem{Bridgeland07} Bridgeland, T. (2007). Stability conditions on triangulated categories. \emph{Annals of Mathematics}, \textbf{166}(2), 317--345.

\bibitem{Bridgeland16} Bridgeland, T. (2016). Riemann-Hilbert problems from Donaldson-Thomas theory. \texttt{arXiv:1611.03697 [math.AG]}.

\bibitem{BridgelandSmith} Bridgeland, T. and Smith, I. (2015). Quadratic differentials as stability conditions. \emph{Publications Math\'ematiques de l'Institut des Hautes \'Etudes Scientifiques}, \textbf{121}(1), 155--278.

\bibitem{DWZ} Derksen, H., Weyman, J., and Zelevinsky, A. (2008). Quivers with potentials and their representations~I: Mutations. \emph{Selecta Mathematica, New Series}, \textbf{14}(1), 59--119.

\bibitem{FG1} Fock, V.V. and Goncharov, A.B. (2006). Moduli spaces of local systems and higher Teichm\"uller theory. \emph{Publications Math\'ematiques de l'Institut des Hautes \'Etudes Scientifiques}, \textbf{103}(1), 1--211.

\bibitem{FST} Fomin, S., Shapiro, M., and Thurston, D. (2008). Cluster algebras and triangulated surfaces. Part~I: Cluster complexes. \emph{Acta Mathematica}, \textbf{201}(1), 83--146.

\bibitem{GMN1} Gaiotto, D., Moore, G.W., and Neitzke, A. (2010). Four-dimensional wall-crossing via three-dimensional field theory. \emph{Communications in Mathematical Physics}, \textbf{299}(1), 163--224.

\bibitem{GMN2} Gaiotto, D., Moore, G.W., and Neitzke, A. (2013). Wall-crossing, Hitchin systems, and the WKB approximation. \emph{Advances in Mathematics}, \textbf{234}(2013), 239--403.

\bibitem{Goncharov} Goncharov, A.B. (2017). Ideal webs, moduli spaces of local systems, and 3d Calabi-Yau categories. In: \emph{Algebra, Geometry, and Physics in the 21st Century}. Birkh\"auser.

\bibitem{GHK} Gross, M., Hacking, P., and Keel, S. (2015). Birational geometry of cluster algebras. \emph{Algebraic Geometry}, \textbf{2}(2), 137--175.

\bibitem{Ikeda} Ikeda, A. (2017). Stability conditions on $\text{CY}_N$ categories associated to $A_n$-quivers and period maps. \emph{Mathematische Annalen}, \textbf{367}(1--2), 1--49.

\bibitem{IwakiNakanishi} Iwaki, K. and Nakanishi, T. (2014). Exact WKB analysis and cluster algebras. \emph{Journal of Physics~A: Mathematical and Theoretical}, \textbf{47}(47), 474009.

\bibitem{Keller} Keller, B. (2006). On differential graded categories. \emph{Proceedings of the International Congress of Mathematicians}.

\bibitem{KellerYang} Keller, B. and Yang, D. (2011). Derived equivalences from mutations of quivers with potential. \emph{Advances in Mathematics}, \textbf{226}(3), 2118--2168.

\bibitem{KoikeSchafke} Koike, T. and Sch\"afke, R. On the Borel summability of WKB solutions of Schr\"odinger equations with polynomial potentials and its application. To appear.

\bibitem{KS} Kontsevich, M. and Soibelman, Y. (2008). Stability structures, motivic Donaldson-Thomas invariants, and cluster transformations. \texttt{arXiv:0811.2435 [math.AG]}.

\bibitem{L} Labardini-Fragoso, D. (2008). Quivers with potential associated to triangulated surfaces. \emph{Proceedings of the London Mathematical Society}, \textbf{98}(3), 797--839.

\bibitem{Masoero} Masoero, D. (2010). Y-system and deformed thermodynamic Bethe ansatz. \emph{Letters in Mathematical Physics}, \textbf{94}(2), 151--164.

\bibitem{SeidelThomas} Seidel, P. and Thomas, R. (2001). Braid group actions on derived categories. \emph{Duke Mathematical Journal}, \textbf{108}(1), 37--108.

\bibitem{Sibuya} Sibuya, Y. (1975). \emph{Global Theory of a Second Order Linear Ordinary Differential Equation with a Polynomial Coefficient}. North-Holland Publishing Company.

\bibitem{Strebel} Strebel, K. (1984). \emph{Quadratic differentials}. Springer-Verlag.

\bibitem{Takei} Takei, Y. (2017). WKB analysis and Stokes geometry of differential equations. In: \emph{Analytic, Algebraic and Geometric Aspects of Differential Equations}. Birkh\"auser.

\end{thebibliography}
\end{document}